\newtheorem{theorem}{Theorem}
\newtheorem{lemma}[theorem]{Lemma}
\newtheorem{proposition}[theorem]{Proposition}
\theoremstyle{remark}
\theoremstyle{definition}
\newtheorem{remark}[theorem]{Remark}
\newtheorem{definition}[theorem]{Definition}
\numberwithin{equation}{section}
\numberwithin{theorem}{section}
\def\F{\mathcal{F}}
\def\R{\mathbb R}
\def\ch{\raise 0.5ex \hbox{$\chi$}}
\def\E{\mathbb{E}}
\let\phi\varphi
\let\epsilon\varepsilon
\def\Re{\operatorname{Re}}
\def\log{\operatorname{log}}
\newcommand{\e}{\varepsilon}
\begin{document}
\title[Weighted inequalities]{The sharp weighted maximal inequalities \\ for noncommutative martingales}

\author[T. Ga\l \k{a}zka]{Tomasz Ga\l \k{a}zka}
\address{Faculty of Mathematics, Informatics and Mechanics, University of Warsaw,
Banacha 2, 02-097 Warsaw, Poland}
\email{T.Galazka@mimuw.edu.pl}

\author[Y. Jiao]{Yong Jiao}
\address{School of Mathematics and Statistics, Central South University, Changsha 410085,
People's Republic of China}
\email{jiaoyong@csu.edu.cn}

\author[A. Os\k ekowski]{Adam Os\k{e}kowski}
\address{Faculty of Mathematics, Informatics and Mechanics, University of Warsaw,
Banacha 2, 02-097 Warsaw, Poland}
\email{A.Osekowski@mimuw.edu.pl}

\author[L. Wu]{Lian Wu}
\address{School of Mathematics and Statistics, Central South University, Changsha 410085,
People's Republic of China}
\email{wulian@csu.edu.cn}

\subjclass[2010]{Primary: 46L53; Secondary: 60G42}
\keywords{Operator-valued martingales, weighted maximal inequalities, $A_p$ weights, singular integrals}
\thanks{Tomasz Ga\l\k{a}zka is supported by Narodowe Centrum Nauki (Poland), grant 2018/30/Q/ST1/00072. Yong Jiao is supported by the NSFC (No. 11961131003 and No. 11722114). Adam Os\k{e}kowski is supported by Narodowe Centrum Nauki (Poland), grant 2018/30/Q/ST1/00072. Lian Wu is supported by the NSFC (No.11971484)}

\begin{abstract}
The purpose of the paper is to establish weighted maximal $L_p$-inequalities in the context of operator-valued martingales on semifinite von Neumann algebras. The main emphasis is put on the optimal dependence of the $L_p$ constants on the characteristic of the weight involved. As applications, we establish weighted estimates for the noncommutative version of Hardy-Littlewood maximal operator and weighted bounds for noncommutative maximal truncations of a wide class of singular integrals.
\end{abstract}

\maketitle

\section{Introduction}
The theory of noncommutative martingales is a fast-expanding area of mathematics, and its fruitful connections with the theory of operator algebras and noncommutative harmonic analysis have been evidenced in numerous articles; see for instance \cite{pisierxu, PisierShlyakhtenko, Xu2006, R3, Mei, MP,parcet,JOW-continuous-dif-sub, JOWA,JOW2,JNWZ}. One of the primary goals of this paper is to study the context of maximal inequalities for operator-valued martingales in the presence of a weight, i.e., a nonnegative and integrable function.

To present the results from the appropriate perspective, let us discuss several closely related areas in the literature. For the relevant definitions and notations, we refer the reader to the next section. The fundamental results of Doob assert that if $x=(x_n)_{n\geq 0}$ is a martingale on some classical probability space $(\Omega,\F,\mathbb{P})$, then we have the weak-type estimate
$$ \lambda \mathbb{P}(\sup_{n\geq 0}|x_n|\geq \lambda)\leq \|x\|_{L_1},\qquad \lambda>0,$$
and its strong-type analogue
$$ \left\|\sup_{n\geq 0}|x_n|\right\|_{L_p}\leq \frac{p}{p-1}\|x\|_{L_p},\qquad 1<p\leq \infty.$$

One may ask about the noncommutative version of the above estimates. In this new context the martingale becomes a sequence of operators and one of the difficulties which need to be overcome is the lack of maximal functions.
In the celebrated paper \cite{cucu}, Cuculescu proposed the following approach towards the weak-type estimate. Suppose that $x=(x_n)_{n\geq 0}$ is an $L_1$-bounded martingale on a filtered, tracial von Neumann algebra $(\mathcal{M},\tau)$. Then for any $\lambda>0$ there is a projection $q_\lambda$ such that $q_\lambda x_nq_\lambda\leq \lambda$ for each $n$ and
\begin{displaymath}
\lambda\tau\left(I-q_\lambda\right) \leq \|x\|_{L_1(\mathcal{M})}.
\end{displaymath}
It is easy to see that this estimate does extend the above weak-type bound, the projection $I-q_\lambda$ plays the role of the indicator function of the event $\{\sup_{n\geq 0}|x_n|\geq \lambda\}$.

Thirty years later, the corresponding strong-type bound was proved in \cite{doob}, thus obtaining the noncommutative analogue of the classical result of Doob. The main idea is to directly introduce the maximal $L_p$-norm of a martingale directly, exploiting vector-valued $L_p$ spaces  $L_p\left(\mathcal{M};\ell_{\infty}\right)$ introduced by Pisier \cite{pisier} in the mid-nineties. The result can be formulated as
\begin{equation}\label{noncommdoob}
 \|x\|_{L_p(\mathcal{M};\ell_\infty)}\lesssim \left(\frac{p}{p-1}\right)^2 \|x\|_{L_p(\mathcal{M})},\qquad 1<p\leq \infty,
\end{equation}
where $\lesssim$ means that the inequality holds true up to some absolute constant and the quadratic order $O((p-1)^{-2})$ as $p\to 1$ is the best possible (see \cite{JX2}). In the proof of  the above inequality, the author transferred the problem to the dual estimate
\begin{equation}\label{dualdoob}
 \qquad \qquad \qquad \qquad  \left\|\sum_{n\geq 0} \mathcal{E}_na_n\right\|_{L_p(\mathcal{M})}\lesssim p^2 \left\|\sum_{n\geq 0}a_n\right\|_{L_p(\mathcal{M})},\qquad 1\leq p<\infty,
\end{equation}
and established it with the use of complex interpolation and Hilbert module theory arguments. A different proof, based on real interpolation, was given in \cite{jungexu}.

The motivation for the results obtained in this paper comes from a very natural question about the weighted analogue of \eqref{noncommdoob}. Let us recall some basic facts from the commutative setting, in which the theory of weighted estimates has been widely developed. Let $d\geq 1$ be a fixed dimension. The Hardy-Littlewood maximal operator $M$ on $\R^d$ acts on locally integrable functions $f:\R^d\to \R$ by the formula
\begin{displaymath}
Mf\left(x\right)=\sup \frac{1}{|Q|} \int_Q |f\left(y\right)|dy,
\end{displaymath}
where the supremum is taken over all cubes $Q$ containing $x$, having sides parallel to the axes. Let $w$ be a weight, i.e., a nonnegative and locally integrable function and let $1<p<\infty$ be a fixed exponent. In the seminal paper \cite{mucken}, Muckenhoupt characterized those $w$, for which the maximal operator is bounded as an operator on $L_p(w)$, i.e., those $w$, for which there exists a finite constant $C_{p,w}$ depending only on the parameters indicated such that
\begin{equation}\label{weightlp}
\int_{\mathbb{R}^d} \left(Mf\right)^pw\mbox{d}x \leq C_{p,w} \int_{\mathbb{R}^d} |f|^pw\mbox{d}x.
\end{equation}
He also studied the analogous problem for weak-type $(p,p)$ inequality:
\begin{equation}\label{weightweak}
\lambda^p\int_{\{ x: Mf\left(x\right)\geq \lambda\}} w\mbox{d}x \leq C_{p,w} \int_{\mathbb{R}^d} |f|^pw\mbox{d}x.
\end{equation}
It turns out that both inequalities are true if and only if $w$ satisfies the so-called $A_p$ condition. The latter means that the $A_p$ characteristic of $w$, given by
\begin{equation}\label{Ap}
[w]_{A_p} := \sup_Q \left(\frac{1}{|Q|} \int_Q w\mbox{d}x \right)\left(\frac{1}{|Q|}\int_Q w^{\frac{1}{1-p}}\mbox{d}x\right)^{p-1}
\end{equation}
(the supremum is taken over all cubes $Q\subset \R^d$ with sides parallel to the axes), is finite. Soon after the appearance of \cite{mucken}, it was shown that the $A_p$ condition characterizes the weighted $L_p$ and weak-$L_p$ boundedness of large families of classical operators, including the Hilbert transform (Hunt, Muckenhoupt and Wheeden \cite{HMW}), general Calde\'on-Zygmund singular integrals (Coifman and Fefferman \cite{CF}), fractional and Poisson integrals (Sawyer \cite{Sa1,Sa2}), area functionals (Buckley \cite{buckley},  Lerner \cite{Le1}) and many more. In  addition, following the work of Ikeda and Kazamaki \cite{IK} (see also Kazamaki \cite{K}), most of the results have been successfully transferred from the analytic to the probabilistic, martingale context (for some recent progress in this direction, see \cite{BO0,BO,BO1,O1,O2,O3}).

There is a very interesting aspect of the weight theory, concerning the extraction of the optimal dependence of the constants involved on the characteristic of a weight. Let us illustrate this problem on the estimate \eqref{weightlp} above. As we have already discussed above, if $w\in A_p$, then the inequality holds with some finite constant $C_{p,w}$. The question is: given $1<p<\infty$, what is the optimal (i.e., the least) exponent $\kappa_p$ such that $C_{p,w}\leq c_p[w]_{A_p}^{\kappa_p}$ for some constant $c_p$ depending only on $p$? This topic has appeared for the first time in Buckley's work \cite{buckley}, where it was shown that in the context of maximal functions, the exponent $\kappa_p=1/(p-1)$ is the best. The breakthrough about the $A_2$ conjecture for general Calderon-Zygmund operators was triggered by Hytonen \cite{Hyt}. For similar results for other classical operators, see e.g. \cite{BO,LMPT,Le1,O3} and consult the references therein.

There is a natural question how much of the weighted theory can be carried over to the noncommutative setting discussed previously. A partial answer to this question was provided in the context of matrix weights, which has been developed very intensively during the last decade. We will discuss here only the extension of Muckenhoupt's estimates. Suppose that $n>1$, $d\geq 1$ are fixed integers. A matrix weight $W$ is an $n\times n$ self-adjoint matrix function on $\R^d$ (with locally integrable entries) such that $W(x)$ is nonnegative-definite for almost all $x\in \R^d$. Given $1\leq p<\infty$ and an $n\times n$ matrix weight $W$ on   $\R^d$, we define the associated weighted space $L_p(W)$ to be the class of all measurable, vector-valued functions $f:\R^d\to \R^n$ such that
$$ \|f\|_{L_p(W)}=\left(\int_{\R^d}|W(x)^{1/p}f(x)|^p\mbox{d}x\right)^{1/p}<\infty.$$
One of the challenging problems (see also \eqref{noncommdoob} above) is to generalize efficiently the Hardy-Littlewood maximal operator $M$ to this new setting. Another question arising immediately concerns the appropriate interpretation of the boundedness of this operator on weighted spaces: since $M$ acts between different spaces (it is reasonable to expect $Mf$ to be a nonnegative function on $\R^d$), the symbol $\|Mf\|_{L_p(W)}$ simply makes no sense. To handle this difficulty, it is instructive to inspect the following change-of-measure argument. Namely, for a given linear operator $T$, its boundedness on the space $L_p(W)$ is equivalent to the boundedness of $W^{1/p}TW^{-1/p}$ on the (unweighted) space $L_p(\R^n;\R^d)$. This suggests that for the maximal operator, one should compose it appropriately with powers of the weight $W$, and then study the boundedness of the resulting operator on the usual unweighted spaces. This idea has turned out to be successful, and it has been generalized to the wide class of Calder\'on-Zygmund singular integral operators by a number of authors (cf. \cite{BPW,CIM,CPO,Go,NPTV}), as well as to the context of fractional operators \cite{CIM}. We mention that the first sharp $L_2$ estimate with weight matrix for the square dyadic function is due to Hyt\"onen et al. in \cite{HPV}.

However, essentially nothing is known in the context of martingales on tracial von Neumann algebras. While it is natural to treat the weights and martingales as operators, the noncommutativity makes the analysis of the joint behavior of these objects extremely difficult. We have decided to restrict ourselves to the special, semicomutative setting, in which many technicalities disappear, but on the other hand, the questions are still interesting and challenging. Namely, we will assume that the underlying von Neumann algebra is of the form $\mathcal{M}=L_\infty(X,\F,\mu)\overline\otimes \mathcal{N}$, where $(X,\F,\mu)$ is a classical $\sigma$-finite measure space and $\mathcal{N}$ is another von Neumann algebra. We will consider filtrations which act on the first component only (i.e., are of the form $L_\infty(X,\F_n,\mu)\overline \otimes \mathcal{N}$, $n=0,\,1,\,\,2,\,\ldots$). Furthermore, a weight will be a nonnegative operator of the form $w\otimes I$, and hence it will commute with every element of $\mathcal{M}$: this, in particular, allows a very simple (and natural) definition of the $A_p$ characteristic: $[w\otimes I]_{A_p}:=[w]_{A_p}$. With no risk of confusion, we will often simplify the notation and write $w$ instead of $w\otimes I$.

Note that $\mathcal{M}$ can be identified with $L_\infty(X;\mathcal{N})$, the space of functions on $X$ taking values in $\mathcal{N}$. That is,  we will consider the case of operator-valued martingales on $X$, with weights being elements of the commutant. This semicomutative context has been studied by many authors and applied in various problems of noncommutative harmonic analysis; we mention here the excellent exposition  \cite{Mei} by Mei.

We will establish the following statement.

\begin{theorem}\label{ref}
Let $1<p<\infty$. Then for any $x\in \mathcal{M}$ and any weight $ {w}\in A_p$,
\begin{equation}\label{wdoob}
 \|x\|_{L_p^{ {w}}(\mathcal{M};\ell_\infty)}\lesssim_p [{w}]_{A_p}^{1/(p-1)}\|x\|_{L_p^ {w}(\mathcal{M})}.
\end{equation}
The exponent $1/(p-1)$ is the best possible, since it is already optimal in the classical case.
\end{theorem}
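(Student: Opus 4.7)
The plan is to prove \eqref{wdoob} via its dual formulation. Since $w=w\otimes I$ lies in the center of $\mathcal{M}$, the functional $\tau_w(\cdot):=\tau(\cdot\,w)$ is itself a normal, faithful, semifinite trace, and Pisier's duality for $L_p(\mathcal{M};\ell_\infty)$ applied in $(\mathcal{M},\tau_w)$ identifies its predual with $L_{p'}(\mathcal{M},\tau_w;\ell_1)$. Setting $\sigma:=w^{1-p'}$ and substituting $a_n:=y_n w$ in the dual pairing $\sum_n\tau_w(x_n y_n)$, the trace property together with the self-adjointness of $\mathcal{E}_n$ shows that \eqref{wdoob} is equivalent to the dual inequality
\begin{equation}\label{plan-dual}
\Bigl\|\sum_{n\geq 0}\mathcal{E}_n(a_n)\Bigr\|_{L_{p'}(\sigma)}\lesssim_p [\sigma]_{A_{p'}}\,\Bigl\|\sum_{n\geq 0}a_n\Bigr\|_{L_{p'}(\sigma)}
\end{equation}
for every finitely supported positive sequence $(a_n)\subset\mathcal{M}_+$. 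Here $[\sigma]_{A_{p'}}=[w]_{A_p}^{1/(p-1)}$ by the standard Muckenhoupt duality, so \eqref{plan-dual} is exactly the dual form of the theorem.

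To prove \eqref{plan-dual} I would perform a change of measure to the trace $\tau_\sigma$. Because $\sigma$ is central, the $\sigma$-averaged operators $E_n^\sigma(f):=\mathcal{E}_n(f\sigma)/\mathcal{E}_n(\sigma)$ are precisely the $\tau_\sigma$-conditional expectations onto $\mathcal{M}_n$, so the unweighted dual Doob inequality \eqref{dualdoob} is available in $L_{p'}(\mathcal{M},\tau_\sigma)$ with a constant depending only on $p$. The identity
$$
\mathcal{E}_n(a_n)=E_n^\sigma(a_n\sigma^{-1})\,\mathcal{E}_n(\sigma)
$$
allows one to factor out the scalar adapted multiplier $\mathcal{E}_n(\sigma)$; combined with a principal-cube (sparse) decomposition on $(X,\F,\mu)$ and the pointwise $A_p$-bound $\mathcal{E}_n(\sigma)^{p-1}\mathcal{E}_n(w)\le[w]_{A_p}$, this is designed to produce the first power of $[\sigma]_{A_{p'}}$ in front of $\|\sum_n a_n\|_{L_{p'}(\sigma)}$.

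The principal difficulty will be combining these three ingredients --- the unweighted noncommutative dual Doob in $(\mathcal{M},\tau_\sigma)$, the pointwise $A_p$-bound, and the sparse/principal-cube decomposition --- while keeping the final exponent of $[w]_{A_p}$ exactly $1/(p-1)$ rather than a larger power. A naive H\"older split of the multipliers $\mathcal{E}_n(\sigma)$ yields strictly worse exponents; the correct approach, following Buckley's argument in the scalar case, is to arrange the stopping times so that every occurrence of $\mathcal{E}_n(\sigma)$ is averaged against $\sigma$ on the corresponding principal atom, exploiting the identity $\sigma^{p'-1}w=1$. In the semicommutative context the decomposition still applies, driven by the scalar function $x\mapsto\tau_\mathcal{N}(a(x))$ with $a:=\sum_n a_n$, and the resulting sparse operator on operator-valued inputs is $L_{p'}(\sigma)$-bounded by a constant depending only on $p$; since the weight is scalar, no extra power of $[w]_{A_p}$ is created in passing to the noncommutative setting. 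Sharpness of $1/(p-1)$ is inherited from the classical Buckley example \cite{buckley} after tensoring with the identity of $\mathcal{N}$.
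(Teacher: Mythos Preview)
Your reduction of \eqref{wdoob} to the dual estimate \eqref{plan-dual} is correct, and this is exactly how the paper treats the range $1<p\le 2$: one proves the weighted dual Doob inequality $\|\sum_n\mathcal{E}_n a_n\|_{L_q^u}\lesssim[u]_{A_q}\|\sum_n a_n\|_{L_q^u}$ for $q=p'\ge2$ and then dualizes. But the paper does \emph{not} obtain that dual estimate by a sparse or principal--cube argument. Instead it uses two simultaneous changes of measure: writing $\mathcal{E}_n(a_n)\mathcal{E}_n(g)=\mathcal{E}_n^{v}(a_nv^{-1})\,\mathcal{E}_n^{w}(gw^{-1})\,\mathcal{E}_n(v)\mathcal{E}_n(w)$, one bounds $\mathcal{E}_n(v)\mathcal{E}_n(w)\le[w]_{A_q}\mathcal{E}_n(v^{2-q})$ via Jensen (this is where $q\ge2$ is needed) and then applies the \emph{unweighted} Doob and dual Doob inequalities in $(\mathcal{M},\tau^{w})$ and $(\mathcal{M},\tau^{v})$ respectively. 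Your single change of measure to $\tau_\sigma$ together with a stopping procedure does not reproduce this.

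For $p>2$ (equivalently $q<2$ in \eqref{plan-dual}) the Jensen step goes the wrong way, and the paper abandons duality altogether. It constructs a majorant for $(x_n)_{n\ge0}$ directly: roughly, $x_n\le[w]_{A_p}^{1/(p-1)}\big(\mathcal{E}_n^{w}(aw^{-1})\big)^{1/(p-1)}$ where $a$ is a Doob majorant of the martingale $\mathcal{E}_n^{v}(x^{p-1}v^{1-p})$. This relies essentially on the operator concavity and operator monotonicity of $t\mapsto t^{1/(p-1)}$, properties available only for $p\ge2$. Thus the two ranges genuinely require different arguments, and only afterwards is \eqref{plan-dual} for $q<2$ recovered, by dualizing back.

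There is a concrete gap in your sparse step. You propose to run a principal--cube decomposition driven by the scalar function $x\mapsto\tau_{\mathcal N}(a(x))$, but stopping on this trace controls only $\|a(\cdot)\|_{L_1(\mathcal N)}$ pointwise, not the operator order: for positive operators, $\tau_{\mathcal N}(A)\le\tau_{\mathcal N}(B)$ does not imply $A\le B$. Hence a pointwise domination $\sum_n\mathcal{E}_n(a_n)\le C\sum_{Q\in\mathscr S}\langle a\rangle_Q\chi_Q$ in the sense of positive operators does \emph{not} follow from scalar stopping times; this is exactly the sort of ``pointwise estimate no longer valid in the context of operators'' flagged in the introduction. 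In addition, your plan appeals to Buckley's argument, but for general filtrations the self-improvement and reverse H\"older properties underlying that argument can fail outright (see Remark~\ref{nonhomog}); the paper deliberately avoids any such machinery, and its Appendix shows that the natural interpolation/factorization substitutes yield only suboptimal exponents.
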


An important comment about the commutative setting is in order. The classical version of the above statement was first obtained by Buckley \cite{buckley} in the special dyadic case, with the use of interpolation and self-improving properties of Muckenhoupt's weights. An alternative proof in the commutative setting, basing on Bellman function method, can be found in \cite{O34}, but it still exploits some regularity of martingales. Indeed, the continuity of paths is used there. Without any additional regularity assumptions, the classical version of Theorem \ref{ref} for martingales adapted to general  filtration was established in \cite{lerner, O1}.  We point out that the proofs provided in \cite{lerner, O1} rely on the development of new ideas:  change-of-measure arguments,  sparse operators and the Bellman function method. The traditional techniques which are frequently used in the theory of weights (e.g., self-improvement, reverse H\"older inequalities) simply fail to hold for the general filtration (see Remark \ref{nonhomog}).

In our considerations below, we will study the estimate \eqref{wdoob} without any assumption on the filtration. Both `commutative' proofs, presented in \cite{lerner} and \cite{O1}, exploit a number of pointwise estimates which are no longer valid in the context of operators. %Fortunately, some special estimates and a certain change-of-measure argument can still be used.
The proof of Theorem \ref{ref} will be divided into two cases according to the exponent $p$. For $1<p\leq 2$, we adopt a duality approach according to the sharp weighted dual Doob's inequality. However, for $2<p<\infty$, we have to invent a completely different method.

The paper is organized as follows. Some preliminary results and notation are presented in Section $2$. In Section $3$, we prove the main theorem, which is noncommutative weighted Doob's inequality with optimal dependence on the characteristic $[w]_{A_p}$. A corresponding weak weighted $L_p$-bound with optimal dependence on $[w]_{A_p}$ is also provided in this section. Sections $4$ and $5$ contain applications of Theorem \ref{ref}. In Section 4, we study a weighted version of the noncommutative $L_p$ bound in the context of maximal operators on general metric spaces satisfying the doubling condition. These results, which can be regarded as noncommutative version of \eqref{weightlp} and \eqref{weightweak}, extend Mei's result \cite[Chapter 3]{Mei} to the weighted case.  Section 5 is devoted to a weighted noncommutative $L_p$ bound for maximal truncations of a certain wide class of singular integral operators on $\R$. In the Appendix, the final part of the paper, we have decided to present alternative proofs of \eqref{wdoob}. Although the arguments yield suboptimal dependence on the characteristic (and exploit stronger regularity assumptions on the filtration), we believe that they are of independent interest and connections.

\section{Preliminaries}

We will introduce and discuss here some basic facts from the operator theory which will be needed in our later considerations. For the detailed and systematic exposition of the subject we refer the reader to the monographs \cite{operator2} and \cite{operator1}.

\subsection{Measurable operators}
Throughout, the letter $\mathcal{M}$ will stand for a semifinite von Neumann algebra of operators acting on some given Hilbert space $\mathcal{H}$, equipped with a  faithful and normal trace $\tau$. Let $x$ be a densely defined self-adjoint operator on $\mathcal{H}$, with the spectral resolution
\begin{displaymath}
x  = \int_{-\infty}^{\infty} sde^x_s.
\end{displaymath}
Then for any Borel set  $B \subset \mathbb{R}$, we define the associated spectral projection by
\begin{displaymath}
{I}_B\left(x\right) = \int_{-\infty}^{\infty} \chi_B\left(s\right)de^x_s.
\end{displaymath}
One can similarly introduces the operator $f(x)$ for sufficiently regular function $f$ on $\R$.
A closed, densely defined operator $x$ on $H$ is said to be  {affiliated} with $\mathcal{M}$ if for all unitary operators $u$ belonging to the commutant $\mathcal{M}'$ of $\mathcal{M}$, we have the identity $u^*au=a$. An operator $x$ affiliated with $\mathcal{M}$ is said to be $\tau$-\textit{measurable}, if there is  $s\geq 0$ such that $\tau\left(I_{\left(s,\infty\right)}\left(|x|\right)\right) < \infty$, where $|x| = \left(x^*x\right)^{1/2}$. We denote the space of all $\tau$-measurable operators by $L_0\left(\mathcal{M},\tau\right)$. Then, for all $0 < p \leq \infty$, the noncommutative $L_p$ space associated with $\left(\mathcal{M},\tau\right)$  is $$L_p\left(\mathcal{M},\tau\right) = \lbrace x\in L_0\left(\mathcal{M},\tau\right) : \tau\left(|x|^p\right) < \infty \rbrace.$$
The associated (semi-)norm is defined by $\displaystyle \|x\|_p = \left(\tau\left(|x|^p\right)\right)^{1/p},$
which is understood as the standard operator norm in the boundary case $p=\infty$.

\subsection{Noncommutative martingales and martingale transforms}
A filtration is an increasing sequence $\left(\mathcal{M}_n\right)_{n \geq 0}$  of von Neumann subalgebras of $\mathcal{M}$ such that the union $\bigcup_{n \geq 0} \mathcal{M}_n$ is w$^\ast$-dense in $\mathcal{M}$. In such a case, for each $n \geq 0$ there is a conditional expectation $\mathcal{E}_n:\mathcal{M} \to \mathcal{M}_n$ associated with $\mathcal{M}_n$: one defines this object as the dual map of natural inclusion
 $i : L_1\left(\mathcal{M}_n\right) \to L_1\left(\mathcal{M}\right)$. It can be easily verified that  $ \mathcal{E}_n\left(axb\right) = a\mathcal{E}_n\left(x\right)b$  for all $x\in \mathcal{M}$ and $a,\,b\in \mathcal{M}_n $; furthermore, $\mathcal{E}_n$ is $\tau$-preserving, i.e., we have $ \tau \circ \mathcal{E}_n = \tau$. In addition, the collection of conditional expectations satisfies the tower property $\mathcal{E}_n\mathcal{E}_m = \mathcal{E}_m\mathcal{E}_n = \mathcal{E}_{\min{\left(m,n\right)}} $. Finally, for any $1\leq p\leq \infty$, the operator  $\mathcal{E}_n$ extends to a contractive projection from $L_p\left(\mathcal{M},\tau\right)$ onto $L_p\left(\mathcal{M}_n,\tau_{|\mathcal{M}_n}\right)$.

A sequence $x = \left(x_n\right)_{n \geq 0} \subset L_1\left(\mathcal{M}\right)+L_\infty(\mathcal{M})$ is called a martingale with respect to  $\left(\mathcal{M}_n\right)_{n \geq 0}$, if the equality $\mathcal{E}_{n}\left(x_{n+1}\right) = x_{n}$ holds for all  $n \geq 0$. If, in addition, $x_n \in L_p\left(\mathcal{M}\right)$ for all $n\geq 0$, then $x$ is called an $L_p$-martingale with respect to  $\left(\mathcal{M}_n\right)_{n \geq 0}$ and we set
\begin{displaymath}
\|x\|_p = \sup_{n \geq 0}\|x_n\|_p.
\end{displaymath}
The martingale $x$ is said to be $L_p$-bounded if $\|x\|_p<\infty$.

Given a martingale $x=(x_n)_{n\geq 0}$, we define its difference sequence $dx=(dx_n)_{n\geq 0}$ by $dx_0=x_0$ and $dx_n=x_n-x_{n-1}$, $n=1,\,2,\,\ldots$. A martingale $y=(y_n)_{n\geq 0}$ is called a transform of $x=(x_n)_{n\geq 0}$, if there is a deterministic sequence $\e=(\e_n)_{n\geq 0}$ with values in $[-1,1]$ such that  $dy_n=\e_ndx_n$ for all $n\geq 0$. Martingale transforms satisfy the $L_p$ estimate
\begin{equation}\label{Lp}
 \|y_n\|_p\leq C_p\|x_n\|_p,\qquad n=0,\,1,\,2\,\ldots,\,\, 1<p<\infty,
\end{equation}
for some constant $C_p$ depending only on $p$. Actually, it can be shown that the optimal orders, as $p\to 1$ or $p\to \infty$, are $O((p-1)^{-1})$ and $O(p)$, respectively. Furthermore, one can allow a slightly larger class of transforming sequences $\e$. See \cite{pisierxu} and \cite{rand} for more on this subject.

\smallskip

\subsection{Maximal spaces} Now we discuss the suitable space required to define meaningful maximal functions. Let $1\leq p\leq \infty$. We define $L_p\left(\mathcal{M}; \ell_\infty\right)$ as the space of all sequences $x = \left(x_n\right)_{n \geq 0} \subset L_p\left(\mathcal{M}\right)$ which admit the decomposition
\begin{displaymath}
x_n = ay_n b \ \ \ \ \ \ \ \ \hbox{for all} \ n \geq 0,
\end{displaymath}
for some  $a,\,b \in L_{2p}\left(\mathcal{M}\right)$ and $y = \left(y_n\right)_{n \geq 0} \subset L_\infty\left(\mathcal{M}\right)$. We equip this space  with the norm
\begin{displaymath}
\|x\|_{L_p\left(\mathcal{M}; \ell_\infty\right)} = \inf \left\lbrace\|a\|_{2p} \sup_{n \geq 0} \|y_n\|_{\infty} \|b\|_{2p}\right\rbrace,
\end{displaymath}
where infimum runs over all factorizations of $x$ as above. The following dual reformulation will be important to us later. Namely, we define $L_p\left(\mathcal{M}; \ell_1\right)$ as the space of all sequences $x = \left(x_n\right)_{n \geq 0} \subset L_p\left(\mathcal{M}\right)$, which are of the form
\begin{displaymath}
x_n = \sum_{k \geq 0} u_{kn}^\ast v_{kn} \ \ \ \ \ \ \ \hbox{for all} \ n \geq 0,
\end{displaymath}
where families $\left(u_{kn}\right)_{k,n \geq 0}, \left(v_{kn}\right)_{k,n \geq 0} \subset L_{2p}\left(\mathcal{M}\right)$ satisfy
\begin{displaymath}
\sum_{k,n \geq 0} u_{kn}^\ast u_{kn} \in L_p\left(\mathcal{M}\right) \ \ \ \ \ \hbox{and} \ \ \ \ \ \sum_{k,n \geq 0} v_{kn}^\ast v_{kn} \in L_p\left(\mathcal{M}\right).
\end{displaymath}
The space $L_p\left(\mathcal{M}; \ell_1\right)$ is equipped with the norm
\begin{displaymath}
\|x\|_{L_p\left(\mathcal{M}; \ell_1\right)} = \inf \Bigg\lbrace\left\|\sum_{k,n \geq 0} u_{kn}^\ast u_{kn}\right\|_p^{\frac{1}{2}} \left\|\sum_{k,n \geq 0} v_{kn}^\ast v_{kn}\right\|_p^{\frac{1}{2}}\Bigg\rbrace,
\end{displaymath}
where infimum runs over all decompositions of $x$ as above. Both  $L_p\left(\mathcal{M}; \ell_\infty\right)$ and $L_p\left(\mathcal{M}; \ell_1\right)$ are Banach spaces and the following theorem is true (see \cite{doob}).

\begin{theorem}
\label{dual}
Let $1 \leq p < \infty$ and $p'$ be the conjugate of $p$. Then
\begin{displaymath}
L_{p}\left(\mathcal{M}; \ell_1\right)^*=L_{p'}\left(\mathcal{M}; \ell_\infty\right) \ \ \ \ \ \hbox{isometrically}
\end{displaymath}
with the duality bracket given by
\begin{displaymath}
\left(x,y\right) = \sum_{n \geq 0} \tau\left(x_ny_n\right)
\end{displaymath}
for $x \in L_p\left(\mathcal{M}; \ell_\infty\right)$ and $y \in L_{p'}\left(\mathcal{M}; \ell_1\right)$.
\end{theorem}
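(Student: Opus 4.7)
The plan is to establish the isometric duality in two parts: first show that the stated pairing induces a contractive embedding $\iota\colon L_{p'}(\mathcal{M};\ell_\infty)\to L_p(\mathcal{M};\ell_1)^*$, and then exhibit a matching inverse.

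\emph{Upper bound.} For arbitrary factorizations $x_n=az_nb$ (with $a,b\in L_{2p'}(\mathcal{M})$ and $z_n\in L_\infty(\mathcal{M})$) and $y_n=\sum_k u_{kn}^*v_{kn}$, tracial cyclicity rewrites $\tau(x_ny_n)$ as a sum of terms of the form $\tau\bigl((v_{kn}a)\,z_n\,(bu_{kn}^*)\bigr)$. A single Cauchy--Schwarz for the trace, combined with the operator bound $z_n^*z_n\le\|z_n\|_\infty^2 I$, reduces the problem to estimating the quadratic forms $\tau(v_{kn}aa^*v_{kn}^*)$ and $\tau(u_{kn}b^*bu_{kn}^*)$. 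A second Cauchy--Schwarz in the double index $(k,n)$, together with the trace identities $\sum_{k,n}\tau(v_{kn}aa^*v_{kn}^*)=\tau(aa^*V)$ and $\sum_{k,n}\tau(u_{kn}b^*bu_{kn}^*)=\tau(b^*bU)$ (where $U=\sum_{k,n}u_{kn}^*u_{kn}$ and $V=\sum_{k,n}v_{kn}^*v_{kn}$), collapses the sum to $\|a\|_{2p'}\|V\|_p^{1/2}\cdot\|b\|_{2p'}\|U\|_p^{1/2}\cdot\sup_n\|z_n\|_\infty$ after an application of the noncommutative Hölder inequality with exponents $p',p$. Passing to infima over both factorizations produces the contractive embedding.

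\emph{Surjectivity with matching norm.} The main device is the bilinear map
\begin{displaymath}
\Phi\colon\bigl((u_{kn}),(v_{kn})\bigr)\mapsto\Bigl(\sum_k u_{kn}^*v_{kn}\Bigr)_{n\ge 0},
\end{displaymath}
which, by the very definition of the $\ell_1$-norm, realizes $L_p(\mathcal{M};\ell_1)$ (up to $\varepsilon$) as the image of a product of two column-Hilbertian $L_{2p}$-spaces over $\mathcal{M}\,\overline{\otimes}\,B(\ell_2)$. For a functional $\phi\in L_p(\mathcal{M};\ell_1)^*$, the composition $\phi\circ\Phi$ is a bounded sesquilinear form on this product, and the standard self-duality between column $L_{2p}$ and column $L_{2p'}$ assigns to it a bounded operator-valued kernel that is diagonal in the sequence index $n$ (because $\Phi$ respects that index). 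The diagonal entries yield a candidate sequence $(x_n)$; applying polar decomposition to the common left and right supports of this kernel extracts a two-sided factorization $x_n=az_nb$ with $\|a\|_{2p'}\|z\|_\infty\|b\|_{2p'}\le\|\phi\|$, which places $x$ in $L_{p'}(\mathcal{M};\ell_\infty)$ with $\|x\|_{L_{p'}(\mathcal{M};\ell_\infty)}\le\|\phi\|$ and $\iota(x)=\phi$.

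The main obstacle is precisely this second step. Because $\Phi$ is bilinear rather than linear, Hahn--Banach cannot be invoked directly; one has to regard $\phi\circ\Phi$ as a bounded $\mathcal{M}$-module map between column Hilbert $C^*$-modules and carry out the polar decomposition there in a way compatible with the $n$-indexed structure of $\Phi$. A further technical point is the reduction to finite sums or to spectral truncations of $aa^*$ and $b^*b$ in order to handle the semifiniteness of $\tau$. Once a factorization $x_n=az_nb$ of the required form has been obtained, Step~1 automatically yields the reverse inequality $\|x\|_{L_{p'}(\mathcal{M};\ell_\infty)}\ge\|\phi\|$, completing the isometric identification.
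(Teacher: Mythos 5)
The paper itself contains no proof of Theorem \ref{dual}: it is imported from Junge's work \cite{doob} (building on Pisier's vector-valued $L_p$-spaces), so there is no internal argument to compare yours with; what you have written is an attempt to reprove that known theorem. Your first step is correct and standard: tracial cyclicity, the Cauchy--Schwarz inequality for the trace, the bound $z_nz_n^*\le\|z_n\|_\infty^2 I$, a second Cauchy--Schwarz in the double index and H\"older with exponents $p,p'$ give $\bigl|\sum_n\tau(x_ny_n)\bigr|\le\|x\|_{L_{p'}(\mathcal{M};\ell_\infty)}\,\|y\|_{L_p(\mathcal{M};\ell_1)}$ after passing to infima over factorizations, hence a contractive (and, by coordinatewise $L_p$--$L_{p'}$ duality, injective) embedding $L_{p'}(\mathcal{M};\ell_\infty)\to L_p(\mathcal{M};\ell_1)^*$.

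The second step, however, has a genuine gap, and it sits exactly where the content of the theorem lies. Given $\phi\in L_p(\mathcal{M};\ell_1)^*$, it is easy to produce coordinates $x_n\in L_{p'}(\mathcal{M})$ with $\phi(y)=\sum_n\tau(x_ny_n)$ for finitely supported $y$; the entire difficulty is to manufacture a \emph{single} pair $a,b\in L_{2p'}(\mathcal{M})$ and uniformly bounded $z_n$ with $x_n=az_nb$ and $\|a\|_{2p'}\sup_n\|z_n\|_\infty\|b\|_{2p'}\le\|\phi\|$. The mechanism you propose --- regard $\phi\circ\Phi$ as a bounded bilinear form on column $L_{2p}$-modules, invoke column self-duality to obtain an ``operator-valued kernel,'' then apply a polar decomposition --- does not deliver this. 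Column duality merely converts the form into a bounded module map from $L_{2p}(\mathcal{M};\ell_2^c)$ to $L_{(2p)'}(\mathcal{M};\ell_2^c)$, which in effect only reproduces the coordinates $x_n$; a polar decomposition of such a kernel factors each $x_n$ separately, and nothing in your argument forces the outer factors $a$ and $b$ to be common to all $n$ with the required $L_{2p'}$-norm control --- that commonality across $n$ is the crux. The known proofs obtain it from a Hahn--Banach/Grothendieck--Pietsch-type separation (convexity) argument producing two dominating densities from which $a$ and $b$ are built (or, in the hyperfinite case, from Pisier's operator-space interpolation); your sketch names the obstacle (``carry out the polar decomposition \dots\ compatible with the $n$-indexed structure'') but does not overcome it, and the asserted diagonality of the kernel in $n$ is likewise unproved. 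As written, the surjectivity-with-matching-norm half, and hence the isometry, is not established; within this paper the intended justification is simply the citation of \cite{doob}.
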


The above spaces have  a much simpler description when restricted to nonnegative operators. Consider $x = \left(x_n\right)_{n \geq 0}$, where $x_n \geq 0$ for all $n \geq0$. Then we have
$$
\|x\|_{L_{p}\left(\mathcal{M}; \ell_1\right)} = \Big\|\sum_{n \geq 0} x_n\Big\|_{L_p}.$$ Furthermore,  $x$ belongs to $L_p\left(\mathcal{M}; \ell_\infty\right)$ if and only if there exists a positive operator $a \in L_p\left(\mathcal{M}\right)$ such that $x_n \leq a$ for all $n \geq 0$. In addition, $\|x\|_{ L_p\left(\mathcal{M}; \ell_\infty\right)} = \inf\lbrace \|a\|_{L_p} : x_n \leq a\mbox{ for all }n\rbrace$.

We would also like to conclude with the remark that the definition of $L_p(\mathcal{M};\ell_\infty)$ extends easily to the case in which the sequences are indexed by an arbitrary set $I$: the relevant factorization makes perfect sense. Denoting the corresponding space by $L_p(\mathcal{M};\ell_\infty(I))$, it is not difficult to check the identity
\begin{equation}\label{not_difficult}
 \|x\|_{L_p(\mathcal{M};\ell_\infty(I))}=\sup_{J\text{ finite}}\|x\|_{L_p(\mathcal{M};\ell_\infty(J))}.
\end{equation}
This observation, with $I=\mathbb{Z}$ or $I=[0,\infty)$, will be important for our applications below.

\smallskip

\subsection{Martingale weights.}
In this subsection, we introduce some basic information on weighted theory in the commutative context. Suppose that $(X,\F,\mu)$ is a classical measure space, filtered by $(\F_n)_{n\geq 0}$, a nondecreasing family of sub-$\sigma$-fields of $\F$ such that $\sigma\left(\bigcup_{n\geq 0}\F_n\right)=\F$ and such that $(X,\F_0,\mu)$ is $\sigma$-finite.
A weight is a positive function belonging to $L_1(X)+L_\infty(X)$; typically, such an object will be denoted by $u$, $w$ or $v$. Any weight $w$ gives rise to the corresponding measure on $X$, also denoted by $w$, and defined by $w(A)=\int_A w\mbox{d}\mu$ for all $A\in \mathcal{F}$. Given $1<p<\infty$, a weight $w$ satisfies the (martingale) $A_p$ condition, if the $A_p$ characteristic
$$ [w]_{A_p}=\sup_{n\geq 0}\left\|\mathbb{E}_n (w) \mathbb{E}_n(w^{1/(1-p)})^{p-1}\right\|_{L_\infty(X)}$$
is finite.
If the filtration is atomic, i.e., for each $n$ the $\sigma$-field $\F_n$ is generated by pairwise disjoint sets of positive and finite measure, then the characteristic can be rewritten in the more usual form
$$ [w]_{A_p}=\sup \left(\frac{1}{\mu(Q)}\int_Q w\mbox{d}\mu\right)\left(\frac{1}{\mu(Q)}\int_Q w^{1/(1-p)}\mbox{d}\mu\right)^{p-1},$$
where the supremum is taken over atoms $Q$ of the filtration. The dual weight to $w\in A_p$ is given by $v=w^{1/(1-p)}$. It follows directly from the definition of the characteristic that $v\in A_{p'}$ and $[v]_{A_{p'}}=[w]_{A_p}^{1/(p-1)}$. There are versions of the $A_p$ condition in the boundary cases $p\in \{1,\infty\}$, which can be obtained by a simple passage to the limit. We will only present here the case $p=1$, as the choice $p=\infty$ will not be presented in our considerations. Namely, a weight $w$ satisfies Muckenhoupt's condition $A_1$, if its characteristic
$$ [w]_{A_1}=\sup_{n\geq 0} \left\|\mathcal{E}_n(w)/w\right\|_{L_\infty(X)}$$
is finite. If the filtration is atomic, then we have the identity
$$ [w]_{A_1}=\sup_Q \operatorname*{esssup}_X \frac{\frac{1}{\mu(Q)}\int_Q w\mbox{d}\mu}{w}.$$

\subsection{Noncommutative weighted $L_p$ spaces}
Assume that  $(X,\F,\mu)$ is a  classical measure space  and let $(\F_n)_{n\geq 0}$ be a discrete-time filtration such that $\sigma(\bigcup_{n\geq 0}\F_n)=\F$ and such that $(X,\F_0,\mu)$ is $\sigma$-finite. Suppose further that $\mathcal{N}$ is a given semifinite von Neumann algebra with a faithful, normal trace $\nu$. We set $\mathcal{M}=L_\infty(X,\F,\mu)\overline\otimes \mathcal{N}$ and endow this algebra with a standard tensor trace $\tau=\mu\otimes \nu$ and the filtration $\mathcal{M}_n=L_\infty(X,\F_n,\mu)\overline\otimes \mathcal{N}$, $n=0,\,1,\,2,\,\ldots$. Then the associated conditional expectations are given by $\mathcal{E}_n=\E(\cdot|\F_n)\otimes I_\mathcal{N}$, where $\E(\cdot|\F_n)$ is the classical conditional expectation with respect to $\mathcal{F}_n$. Furthermore, the elements of $\mathcal{M}$ can be regarded as bounded functions taking values in $\mathcal{N}$ and the $L_p$-bounded martingales in this context can be identified with $L_p$-bounded martingales on $(X,\F,\mu)$ with values in $L_p(\mathcal{N})$.

In our considerations below, a weight will be a positive operator of the form $w\otimes I$, where $w$ is a classical weight on $(X,\F,\mu)$. Such operators commute with all elements of $\mathcal{M}$ and all conditional expectations $\mathcal{E}_n(w\otimes I)$ also enjoy this property. We say that $ { {w\otimes I}}$ satisfies Muckenhoupt's condition $A_p$ (or belongs to the $A_p$ class), if the scalar weight $w$ has this property. Furthermore, we set $[ { {w}\otimes I}]_{A_p}=[w]_{A_p}$. From now on, we will skip the tensor and identify $w\otimes I$ with $w$; this should not lead to any confusion.

Given $1\leq p<\infty$ and $w$ as above, the associated noncommutative weighted $L_p$ space is defined by
$$ L_p^ { {w}}(\mathcal{M})=\left\{x\in L_0(\mathcal{M},\tau)\,:\,x { {w}}^{1/p}\in L_p(\mathcal{M})\right\}.$$
That is to say, $L_p^ { {w}}(\mathcal{M})$ is the usual noncommutative $L_p$ space with respect to the weighted trace $\tau^{ { {w}}}(x):=\tau(x { {w}})$, $x\in \mathcal{M}$. The fact that $ { {w}}$ is positive and commutes with all the elements of $\mathcal{M}$ implies that $\tau^{ { {w}}}$ is indeed a trace. This change-of-measure argument, based on passing from one trace to another, will play an important role in our considerations below. In particular, we will need the following simple fact. Here and in what follows, $\mathcal{E}_n^{ { {w}}}$ denotes the conditional expectation, with respect to $\mathcal{M}_n$ and the trace $\tau^{ { {w}}}$ (while $\mathcal{E}_n$ is the usual conditional expectation, relative to the unweighted trace $\tau$).

\begin{lemma}
For any $x\in L_1^w(\mathcal{M})$ we have
\begin{equation}
\label{1}
\mathcal{E}^{ {{w}}}_n\left(x\right) = \mathcal{E}_n\left(x {w}\right)\left(\mathcal{E}_n\left( {w}\right)\right)^{-1}.
\end{equation}
\end{lemma}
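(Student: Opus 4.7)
My approach is to verify the formula by appealing to the universal characterization of the weighted conditional expectation. Recall that $\mathcal{E}^w_n$ is the unique $\tau^w$-preserving normal conditional expectation from $\mathcal{M}$ onto $\mathcal{M}_n$; equivalently, $\mathcal{E}^w_n(x)$ is characterized as the unique element $Y\in\mathcal{M}_n$ satisfying
\[
\tau^w(Yz)=\tau^w(xz)\quad\text{for every }z\in\mathcal{M}_n,\qquad\text{i.e.}\qquad \tau(Yzw)=\tau(xzw).
\]
Taking $Y:=\mathcal{E}_n(xw)(\mathcal{E}_n(w))^{-1}$ as the candidate, membership $Y\in\mathcal{M}_n$ is immediate since $\mathcal{E}_n(xw),\mathcal{E}_n(w)\in\mathcal{M}_n$ and $\mathcal{E}_n(w)$ is positive. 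So the substantive point is to verify the duality relation $\tau(Yzw)=\tau(xzw)$ for arbitrary $z\in\mathcal{M}_n$.

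The verification is an algebraic manipulation built on two facts: (i) because $w=w\otimes I$, both $w$ and $\mathcal{E}_n(w)=\mathcal{E}_n(w)\otimes I$ (hence $(\mathcal{E}_n(w))^{-1}$) lie in the \emph{center} of $\mathcal{M}$, so they commute with every element; and (ii) $\mathcal{E}_n$ is a $\tau$-preserving $\mathcal{M}_n$-bimodule map. I would first use centrality of $(\mathcal{E}_n(w))^{-1}$ to regroup
\[
\tau(Yzw)=\tau\bigl(\mathcal{E}_n(xw)\,z(\mathcal{E}_n(w))^{-1}\,w\bigr),
\]
with $A:=\mathcal{E}_n(xw)\,z(\mathcal{E}_n(w))^{-1}\in\mathcal{M}_n$. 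Then the identity $\tau(Aw)=\tau(\mathcal{E}_n(Aw))=\tau(A\,\mathcal{E}_n(w))$, a direct consequence of $\tau=\tau\circ\mathcal{E}_n$ together with the bimodule property, lets me replace $w$ by $\mathcal{E}_n(w)$; the central factor $(\mathcal{E}_n(w))^{-1}\mathcal{E}_n(w)$ then cancels to $I$, leaving $\tau\bigl(\mathcal{E}_n(xw)\,z\bigr)$. Finally, $z\in\mathcal{M}_n$ plus the bimodule/trace-preserving properties give $\tau(\mathcal{E}_n(xw)\,z)=\tau(xwz)=\tau(xzw)$ by centrality of $w$, completing the verification and hence the uniqueness argument.

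\textbf{Main obstacle.} The only genuine wrinkle is that $w$ itself typically fails to belong to $\mathcal{M}_n$, so the $\mathcal{M}_n$-bimodule property of $\mathcal{E}_n$ cannot be invoked on $w$ directly; the correct response is to exploit the semicommutative structure, which places $w$ in the center of $\mathcal{M}$, allowing it to be freely shuttled across products in a trace and then absorbed into $\mathcal{E}_n$ at the cost of being replaced by $\mathcal{E}_n(w)$. Everything else is essentially bookkeeping.
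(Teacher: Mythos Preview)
Your proposal is correct and follows essentially the same strategy as the paper: both verify that the candidate $\mathcal{E}_n(xw)(\mathcal{E}_n(w))^{-1}$ satisfies the defining properties of the weighted conditional expectation, exploiting throughout that $w$ and $\mathcal{E}_n(w)$ are central. The only cosmetic difference is that the paper checks the $\mathcal{M}_n$-bimodule property and $\tau^w$-preservation separately, whereas you bundle these into the single duality relation $\tau^w(Yz)=\tau^w(xz)$ for $z\in\mathcal{M}_n$; these are standard equivalent characterizations, so the arguments amount to the same thing.
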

\begin{proof}
Let us check whether the right-hand side of \eqref{1} enjoys all the properties of conditional expectation. Obviously, it belongs to $\mathcal{M}_n$. Furthermore, if $a,\,b$ are arbitrary elements of $\mathcal{M}_n$, then by the commuting property of $ { {w}}$,
$$\mathcal{E}_n\left(axb { {w}}\right)\left(\mathcal{E}_n\left( { {w}}\right)\right)^{-1}=
\mathcal{E}_n\left(ax { {w}}b\right)\left(\mathcal{E}_n\left( { {w}}\right)\right)^{-1}=
a\mathcal{E}_n\left(x { {w}}\right)b\left(\mathcal{E}_n\left( { {w}}\right)\right)^{-1}
=a\mathcal{E}_n\left(x { {w}}\right)\left(\mathcal{E}_n\left( { {w}}\right)\right)^{-1}b.$$
Finally, the right-hand side of \eqref{1} preserves the trace $\tau^{ { {w}}}$: indeed,
\begin{align*}
 \tau^{ { {w}}}\left(\mathcal{E}_n\left(x {  w}\right)\left(\mathcal{E}_n\left( {w}\right)\right)^{-1}\right)
&=\tau\left(\mathcal{E}_n\left(x { {w}}\right)\left(\mathcal{E}_n\left( { {w}}\right)\right)^{-1} { {w}}\right)\\
&=\tau\left(\mathcal{E}_n\left(x { {w}}\right)\left(\mathcal{E}_n\left( { {w}}\right)\right)^{-1}\mathcal{E}_n( { {w}})\right)=\tau\left(\mathcal{E}_n\left(x { {w}}\right)\right)=\tau(x { {w}})=\tau^{ { {w}}}(x).
\end{align*}
This proves the claim.
\end{proof}

\begin{remark}
The above lemma has a very transparent meaning if the underlying filtration $(\F_n)_{n\geq 0}$ is atomic. In such a case, we have the following explicit formula for $\mathcal{E}_n^ { {w}}$: if we identify $\mathcal{M}$ with operator-valued random variables, then
\begin{displaymath}
\mathcal{E}_n^ { {w}}x = \sum_{Q \in {At}_n} \frac{1}{w\left(Q\right)}\int_{Q} x\left(\omega\right) w\mu(\mbox{d}\omega) \cdot \raisebox{2pt}{$\chi$}_Q,
\end{displaymath}
where $At_n$ is the collection of all atoms of $\F_n$ and  $w\left(Q\right) = \int_Q w\mbox{d}\mu$.
\end{remark}

By a similar argument, which rests on the passage from the trace $\tau$ to its weighted version $\tau^w$, one defines the appropriate weighted maximal spaces $L_p^w(\mathcal{M};\ell_\infty)$ and $L_p^w(\mathcal{M};\ell_1)$.

\bigskip

\section{Weighted Doob's maximal inequality}

In this section, we provide the proof of  Theorem \ref{ref} and also establish its weak version. Both results are of optimal dependence on the characteristic of the weight involved.

\subsection{Weighted maximal inequalities}
The  purpose of this subsection is to prove Theorem \ref{ref}. We start with the following statement, which extends dual Doob's inequality to the weighted case.

\begin{theorem}
\label{doob2}
Let $1\leq  p <\infty$ and $ {w} \in A_p$. For any sequence $\left(a_n\right)_{n\geq 0}$ of positive elements of $L_p^ {w}(\mathcal{M})$ we have
\begin{equation}\label{idual}
\left\|\sum_{n\geq 0} \mathcal{E}_n\left(a_n\right)\right\|_{L_p^ {w}(\mathcal{M})} \leq c_p[ {w}\,]_{A_p}\left\|\sum_{n \geq 0}a_n\right\|_{L_p^ {w}(\mathcal{M})},
\end{equation}
where $c_p$ depends only on $p$. The exponent of $[ {w}\,]_{A_p}$ is the best possible.
\end{theorem}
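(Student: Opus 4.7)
I plan to split the argument at the threshold $p=1$. For $p=1$ I will argue directly from the $A_1$ condition $\mathcal{E}_n(w)\le [w]_{A_1}\,w$ combined with the centrality of $w$ in $\mathcal{M}$ and the trace-preservation of each $\mathcal{E}_n$: for a positive sequence $(a_n)$,
\[
\Bigl\|\sum_n\mathcal{E}_n(a_n)\Bigr\|_{L_1^w}
=\sum_n\tau(\mathcal{E}_n(a_n)w)
=\sum_n\tau(a_n\,\mathcal{E}_n(w))
\le [w]_{A_1}\sum_n\tau(a_nw)
=[w]_{A_1}\Bigl\|\sum_n a_n\Bigr\|_{L_1^w},
\]
settling the claim with the sharp linear exponent on $[w]_{A_1}$.

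For $1<p<\infty$ my strategy is to dualize. Let $v=w^{1/(1-p)}$; then $v$ is central, $v\in A_{p'}$ and $[v]_{A_{p'}}=[w]_{A_p}^{1/(p-1)}$. The $\tau$-pairing identifies $L_{p'}^v(\mathcal{M})$ with the dual of $L_p^w(\mathcal{M})$, whence
\[
\Bigl\|\sum_n\mathcal{E}_n(a_n)\Bigr\|_{L_p^w}
=\sup_{y\ge 0,\,\|y\|_{L_{p'}^v}\le 1}\sum_n\tau\bigl(\mathcal{E}_n(y)\,a_n\bigr),
\]
and the weighted $\tau$-pairing version of Theorem \ref{dual} for vector-valued spaces reduces the problem to
\[
\|(\mathcal{E}_n(y))_n\|_{L_{p'}^v(\mathcal{M};\ell_\infty)}\lesssim_p [w]_{A_p}\,\|y\|_{L_{p'}^v}.
\]
This is precisely the weighted noncommutative Doob maximal inequality at exponent $p'$ with weight $v$ and the sharp constant $[v]_{A_{p'}}^{1/(p'-1)}=[w]_{A_p}$.

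To produce this maximal estimate without circular appeal to Theorem \ref{ref}, I would argue in two pieces. When $p\in(1,2]$ (so $p'\ge 2$), the direct, non-duality based route the paper employs in Section~3 for the Doob inequality at exponents at least $2$ is available and supplies the required estimate. When $p\ge 2$ (so $p'\in(1,2]$), I would instead establish the $p=2$ case of the present theorem separately --- via a quadratic/Bellman-function argument that exploits the centrality of $w$ and the $A_2$ characteristic written in its operator form $\mathcal{E}_n(w)\mathcal{E}_n(v)\le[w]_{A_2}$ --- and then combine it with the $p=1$ bound above by a sharp Rubio-de-Francia-type extrapolation that preserves the linear exponent in $[w]_{A_p}$ throughout the intermediate range.

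The principal technical obstacle is the interchange of $\mathcal{E}_n$ with the central but not $\mathcal{M}_n$-measurable weight $w$: one must pass between the unweighted conditional expectation $\mathcal{E}_n$ and its weighted counterpart $\mathcal{E}_n^w$ via the identity $\mathcal{E}_n(xw)=\mathcal{E}_n^w(x)\mathcal{E}_n(w)$ from the preceding lemma, and the $A_p$ characteristic is used exactly at the unique step where the residual multiplier $\mathcal{E}_n(w)\mathcal{E}_n(v)^{p-1}$ must be bounded in $L_\infty$. Sharpness of the linear exponent is inherited verbatim from the commutative case $\mathcal{N}=\mathbb{C}$, where Buckley's classical example shows that the dependence on $[w]_{A_p}$ cannot be improved.
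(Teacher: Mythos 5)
Your $p=1$ argument is exactly the paper's and is correct. Your duality reduction for $1<p<\infty$ --- pairing with positive $y\in L_{p'}^{v}(\mathcal{M})$ of unit norm and reducing \eqref{idual} to the weighted maximal inequality $\|(\mathcal{E}_n(y))_{n}\|_{L_{p'}^{v}(\mathcal{M};\ell_\infty)}\lesssim_p [v]_{A_{p'}}^{1/(p'-1)}\|y\|_{L_{p'}^{v}}$ --- is sound, and for $1<p<2$ it coincides with the paper's own proof: the needed maximal inequality at exponent $p'>2$ is supplied by the direct majorization argument (two applications of the unweighted noncommutative Doob inequality together with operator monotonicity and concavity of $t\mapsto t^{1/(p'-1)}$), which does not invoke Theorem \ref{doob2} and is therefore non-circular.

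The gap is in the range $p\ge 2$. There your reduction requires the weighted maximal inequality at $p'\in(1,2]$, and your proposed substitute --- a Bellman-function proof of the $p=2$ case of \eqref{idual} followed by a ``sharp Rubio-de-Francia-type extrapolation'' --- consists of two unestablished claims, neither of which is routine here. No Bellman function is exhibited, and a noncommutative Bellman argument would have to replace the pointwise concavity estimates of the commutative proofs by operator-convexity statements; the paper explicitly notes that such pointwise estimates do not survive the passage to operators. The extrapolation step is worse: as written you extrapolate between the endpoints $p=1$ and $p=2$, which covers only the ``intermediate range'' $1\le p\le 2$ and never reaches the exponents $p>2$ you must treat; and even under the charitable reading (sharp upward extrapolation from $p_0=2$, which would indeed preserve a linear exponent for $p\ge 2$), no extrapolation theorem for operator-valued inequalities over general non-regular filtrations is available or proved --- Remark \ref{nonhomog} shows that the self-improvement properties underlying the classical extrapolation machinery genuinely fail for such filtrations. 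The paper's actual proof for $p\ge 2$ avoids all of this: after pairing with $g$ it writes $\tau(\mathcal{E}_n(a_n)\mathcal{E}_n(g))=\tau\bigl(\mathcal{E}_n^{v}(a_nv^{-1})\mathcal{E}_n^{w}(gw^{-1})\mathcal{E}_n(v)\mathcal{E}_n(w)\bigr)$ via the identity $\mathcal{E}_n(x)=\mathcal{E}_n^{u}(xu^{-1})\mathcal{E}_n(u)$, bounds $\mathcal{E}_n(w)\mathcal{E}_n(v)\le [w]_{A_p}\mathcal{E}_n(v^{2-p})$ by Jensen's inequality (this is precisely where $p\ge2$ enters), and then uses only the \emph{unweighted} noncommutative Doob inequalities: the maximal form for $(\mathcal{E}_n^{w}(gw^{-1}))_n$ in $L_{p'}(\tau^{w})$ and the dual form \eqref{dualdoob} for $(a_nv^{-1})_n$ in $L_p(\tau^{v})$. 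You should either adopt that route for $p\ge 2$ or supply complete proofs of the two missing ingredients.
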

\begin{proof}[Proof of Theorem \ref{doob2} for $p=1$ and $p\geq 2$] By the $\sigma$-finiteness of $(X,\F_0,\mu)$, a simple splitting argument allows us to assume that  $\mu(X)<\infty$. Then in particular $w$ must be integrable. Now, if $p=1$, then we have
$$ \left\|\sum_{n\geq 0} \mathcal{E}_n\left(a_n\right)\right\|_{L_1^ {  w}(\mathcal{M})}=\tau\left(\sum_{n\geq 0} \mathcal{E}_n\left(a_n\right) { {w}}\right)=\tau\left(\sum_{n\geq 0} a_n\mathcal{E}_n\left( { {w}}\right)\right)\leq [ { {w}}]_{A_1}\tau\left(\sum_{n\geq 0} a_n  { {w}}\right),$$
so the desired bound holds with $c_1=1$. Next, suppose that $p\geq 2$ and let $v=w^{1/(1-p)}$ be the dual weight  to $w$. Muckenhoupt's condition implies that for any $n\geq 0$,
\begin{equation}\label{App}
 \mathcal{E}_n( { {w}})\mathcal{E}_n( { {v}})=\E(w|\F_n)\E(v|\F_n)\otimes I\leq [w]_{A_p}(\E(v|\F_n))^{2-p}\otimes I\leq [w]_{A_p}\E(v^{2-p}|\F_n)\otimes I,
\end{equation}
where the last passage is due to Jensen's inequality (and the assumption $p\geq 2$). Now, fix a positive operator $g\in \mathcal{M}$ satisfying $\|g\|_{L_{p'}^ { {v}}} \leq 1$. Note that $g\in L_1(\mathcal{M})$: by H\"older's inequality, $\|g\|_{L_1(\mathcal{M})}\leq \|g\|_{L_{p'}^ {  v}}\|w\|_{L_1}^{1/p}<\infty$. By properties of conditional expectations, we may write
\begin{displaymath}
\tau\left(\sum_{n\geq 0}\mathcal{E}_n(a_n)g\right)=\sum_{n \geq 0} \tau\left(\mathcal{E}_n\left(a_n\right)g\right) = \sum_{n \geq 0} \tau\left(\mathcal{E}_n\left(a_n\right)\mathcal{E}_n\left(g\right)\right).
\end{displaymath}
Using the identity $\left(\ref{1}\right)$ and the commuting properties of $ { {w}}$, $ { {v}}$ and their conditional expectations, we obtain
\begin{align*}
\sum_{n \geq 0} \tau\left(\mathcal{E}_n\left(a_n\right)\mathcal{E}_n\left(g\right)\right) &= \sum_{n \geq 0} \tau\left(\mathcal{E}^{ { {v}}}_n\left(a_n { {v}}^{-1}\right)\mathcal{E}^{ { {w}}}_n\left(g { {w}}^{-1}\right)\mathcal{E}_n\left( { {v}}\right)\mathcal{E}_n\left( { {w}}\right)\right),
\end{align*}
which, by \eqref{App}, does not exceed
\begin{align*}
\sum_{n \geq 0} \tau\left(\mathcal{E}^{ { {v}}}_n\left(a_n { {v}}^{-1}\right)\mathcal{E}^{ { {w}}}_n\left(g { {w}}^{-1}\right)[ { {w}}]_{A_p}\mathcal{E}_n\left( { {v}}^{2-p}\right)\right)
&= [ { {w}}]_{A_p} \sum_{n \geq 0} \tau\left(\mathcal{E}^{ { {v}}}_n\left(a_n { {v}}^{-1}\right)\mathcal{E}^{ { {w}}}_n\left(g { {w}}^{-1}\right) { {v}}^{2-p}\right)\\
&= [ { {w}}]_{A_p} \sum_{n \geq 0} \tau\left(\mathcal{E}^{ { {v}}}_n\left(a_n { {v}}^{-1}\right)\mathcal{E}^{ { {w}}}_n\left(g { {w}}^{-1}\right) { {v}}^{\frac{1}{p}} { {w}}^{\frac{1}{p'}}\right).
\end{align*}
As we mentioned above, $g$ belongs to the space $L_1(\mathcal{M})$ and hence $g {  w}^{-1}\in L_1^ {  w}(\mathcal{M})$.
By noncommutative Doob's inequality in $L_{p'}$, applied to the nonnegative martingale $\left(\mathcal{E}^{ { {w}}}_n\left(g { {w}}^{-1}\right)\right)_{n \geq 0}$ (on von Neumann algebra $\left(\mathcal{M}, \tau^ { {w}}\right)$), there exists an operator $a$ such that $\mathcal{E}^{ { {w}}}_n\left(g { {w}}^{-1}\right)\leq a$ for every $n \geq 0$ and
\begin{displaymath}
\|a\|_{L_{p'}^ { {w}}(\mathcal{M})} \leq C_{p'}\|g { {w}}^{-1}\|_{L_{p'}^ { {w}}(\mathcal{M})} = C_{p'}\left(\tau\left(g^{p'} { {w}}^{-p'} { {w}}\right)\right)^{\frac{1}{p'}} =  C_{p'}\|g\|_{L_{p'}^ { {v}}} \leq C_{p'}.
\end{displaymath}
Here the last estimate follows from the assumption $\|g\|_{L_{p'}^ { {v}}(\mathcal{M})}\leq 1$ we imposed at the beginning. Consequently, by the tracial property (and the fact that $ { {w}}$ and $ { {v}}$ commute with all elements of $\mathcal{M}$) we get
$$  \tau\left(\mathcal{E}^{ { {v}}}_n\left(a_n { {v}}^{-1}\right)\mathcal{E}^{ { {w}}}_n\left(g { {w}}^{-1}\right) { {v}}^{\frac{1}{p}} { {w}}^{\frac{1}{p'}}\right)\leq
\tau\left(\mathcal{E}^{ { {v}}}_n\left(a_n { {v}}^{-1}\right)a { {v}}^{\frac{1}{p}} { {w}}^{\frac{1}{p'}}\right).$$
Therefore, by the H\"older inequality,
\begin{align*}
\tau\left(\sum_{n \geq 0} \mathcal{E}_n\left(a_n\right)g\right) &\leq [ { {w}}]_{A_p}  \tau\left(\sum_{n \geq 0}\left(\mathcal{E}^{ { {v}}}_n\left(a_n { {v}}^{-1}\right) { {v}}^{\frac{1}{p}}\right)a { {w}}^{\frac{1}{p'}}\right)\\
&\leq [ {  w}\,]_{A_p}  \left\|\sum_{n \geq 0}\mathcal{E}^{ { {v}}}_n\left(a_n { {v}}^{-1}\right)\right\|_{L_p^ { {v}}(\mathcal{M})}\|a\|_{L_{p'}^ { {w}}(\mathcal{M})}\\
&\leq C_{p'}C_p[ { {w}}]_{A_p} \left\|\sum_{n \geq 0}a_n { {v}}^{-1}\right\|_{L_p^ { {v}}(\mathcal{M})} = c_p[ { {w}}]_{A_p} \left\|\sum_{n \geq 0}a_n\right\|_{L_p^ { {w}}(\mathcal{M})}.
\end{align*}
Here in the last line we have exploited the dual form of Doob's inequality \eqref{dualdoob}, applied to the nonnegative sequence  $\left(a_n { {v}}^{-1}\right)_{n \geq 0}$ on von Neumann algebra $\left(\mathcal{M}, \tau^ { {v}}\right)$. To finish the proof, we specify $g=(\sum_{n\geq 0}\mathcal{E}_na_n)^{p-1} { {w}}/\|\sum_{n\geq 0}\mathcal{E}_na_n\|_{L_p^ { {w}}(\mathcal{M})}^{p-1}$: then $\|g\|_{L_{p'}^ { {v}}}=1$ and
$$ \tau\left(\sum_{n \geq 0} \mathcal{E}_n\left(a_n\right)g\right) =\left\|\sum_{n\geq 0} \mathcal{E}_n\left(a_n\right)\right\|_{L_p^ { {w}}(\mathcal{M})}. \qquad \qquad \qedhere$$
\end{proof}

\begin{remark}
The above reasoning can be repeated in the case $1<p<2$, but then \eqref{App} does not hold any more. Instead, we may write
\begin{align*}
 \mathcal{E}_n( { {w}})\mathcal{E}_n( { {v}})&=\E(w|\F_n)\E(v|\F_n)\otimes I\\
 &=\E(v^{1-p}|\F_n)\E(v|\F_n)\otimes I\\
 &=\E(v^{\frac{1}{1-p'}}|\F_n)^{p'-1}\E(v|\F_n)\E(v^{\frac{1}{1-p'}}|\F_n)^{2-p'}\otimes I\\
 &\leq [v]_{A_{p'}} \E(v^{1-p}|\F_n)^{2-p'} \otimes I\\
 &\leq [v]_{A_{p'}} \E(v^{(1-p)(2-p')}|\F_n) \otimes I \\
 & = [v]_{A_{p'}} \E(v^{2-p}|\F_n) \otimes I
\end{align*}
where the last inequality is due to Jensen's inequality and the assumption that $p<2$. Note that $[v]_{A_{p'}}=[w]_{A_p}^{1/(p-1)}$, so we get \eqref{idual}, but with the worse, nonlinear dependence $[ {  w}]_{A_p}^{1/(p-1)}$. To overcome this difficulty, we will use a different approach.
\end{remark}

The proof of Theorem \ref{doob2} in the range $1<p<2$ is postponed for a while. We now use the duality approach to prove Theorem \ref{ref} for $1<p\leq 2$.

\begin{proof}[Proof of Theorem \ref{ref} for $1<p\leq 2$] We deduce the assertion from the previous statement. Again, we may assume that $\mu(X)<\infty$.
Pick an arbitrary positive element $x$ of $L_p^ {  w}\left(\mathcal{M}\right)$. Then
$$\|x\|_{L_1(\mathcal{M})}\leq \|x\|_{L_p^ {  w}(\mathcal{M})}\| {  v}\|_{L_1(\mathcal{M})}^{1/p'}<\infty$$ and hence $(x_n)_{n\geq 0} = (\mathcal{E}_n\left(x\right))_{n\geq 0}$ is a well-defined $L_1$-bounded martingale on $(\mathcal{M},\tau)$. This sequence is contained in $L_p^ {  w}(\mathcal{M})$, by \eqref{idual}. Next, consider an arbitrary operator $y \in L_{p'}^ { {w}}\left(\mathcal{M}; \ell_1\right)$ and let $\left(a_{kn}\right)_{k,n \geq 0}, \left(b_{kn}\right)_{k,n \geq 0}$  be families of elements of $L_{2p'}^ {  w}\left(\mathcal{M}\right)$, satisfying
\begin{displaymath}
y_n = \sum_{k \geq 0} a_{kn}^\ast b_{kn} \ \ \ \ \ \ \ \hbox{for all} \ n \geq 0.
\end{displaymath}
Then, by H\"older's inequality and properties of conditional expectations,
\begin{align*}
\Bigg \vert\sum_{n \geq 0} \tau^ {  w} \left(x_ny_n\right)\Bigg\vert = \Bigg\vert\sum_{n,k \geq 0} \tau \left(\mathcal{E}_n\left(x\right)a_{kn}^\ast b_{kn} {  w}\right)\Bigg\vert &= \Bigg\vert\sum_{n,k \geq 0} \tau \left(\mathcal{E}_n\left(a_{kn}^\ast b_{kn} {  w}\right)x\right)\Bigg\vert\\
&= \Bigg\vert \tau \left(\sum_{n,k \geq 0} \left(\mathcal{E}_n\left(a_{kn}^\ast b_{kn} {  w}\right)  {  w}^{-\frac{1}{p}}\right)x {  w}^{\frac{1}{p}}\right)\Bigg\vert\\
&\leq \left\|xw^{\frac{1}{p}}\right\|_{L_{p}\left(\mathcal{M}\right)}\left\|{\sum_{n,k \geq 0} \mathcal{E}_n\left(a_{kn}^\ast b_{kn} {  w}\right) {  w}^{-\frac{1}{p}}}\right\|_{L_{p'}\left(\mathcal{M}\right)}\\
&=\|x\|_{L_{p}^ {  w}\left(\mathcal M\right)}\left\|{\sum_{n,k \geq 0} \mathcal{E}_n\left(a_{kn}^\ast b_{kn} {  w}\right)}\right\|_{L_{p'}^ {  v}\left(\mathcal{M}\right)}.
\end{align*}
Now, by  the H\"older inequality (\cite[Proposition 2.15]{doob}) and Theorem \ref{doob2} applied to $ {  v}\in A_{p'}$ (note that $p'\geq 2$), we may proceed as follows:
\begin{align*}
\left\|\sum_{n,k \geq 0} \mathcal{E}_n\left(a_{kn}^\ast  {  w}^{\frac{1}{2}} b_{kn} {  w}^{\frac{1}{2}}\right)\right\|_{L_{p'}^ {  v}\left(\mathcal M\right)} &\leq \left\|\sum_{n,k \geq 0} \mathcal{E}_n\left(a_{kn}^\ast a_{kn} {  w}\right)\right\|_{L_{p'}^ {  v}\left(\mathcal{M}\right)}^{\frac{1}{2}}\left\|\sum_{n,k \geq 0} \mathcal{E}_n\left(b_{kn}^\ast b_{kn} {  w}\right)\right\|_{L_{p'}^ {  v}\left(\mathcal{M}\right)}^{\frac{1}{2}}\\
&\leq c_{p'}[ {  v}]_{A_{p'}} \left\|\sum_{n,k \geq 0} a_{kn}^\ast a_{kn} {  w}\right\|_{L_{p'}^ {  v}\left(\mathcal M\right)}^{\frac{1}{2}}\left\|\sum_{n,k \geq 0} b_{kn}^\ast b_{kn} {  w}\right\|_{L_{p'}^ {  v}\left(\mathcal M\right)}^{\frac{1}{2}}\\
&= c_{p'}[ {  v}]_{A_{p'}} \left\|\sum_{n,k \geq 0} a_{kn}^\ast a_{kn}\right\|_{L_{p'}^ {  w}\left(\mathcal M\right)}^{\frac{1}{2}}\left\|\sum_{n,k \geq 0} b_{kn}^\ast b_{kn}\right\|_{L_{p'}^ {  w}\left(\mathcal M\right)}^{\frac{1}{2}}\\
&\leq c_{p'}[ {  v}]_{A_{p'}} \|y\|_{L_{p'}^ {  w}\left(\mathcal{M}; \ell_1\right)} \\
&= c_{p'}[ {  w}]_{A_{p}}^{1/(p-1)} \|y\|_{L_{p'}^ {  w}\left(\mathcal{M}; \ell_1\right)}.
\end{align*}
Since $\sum_{n \geq 0} \tau^ {  w} \left(x_ny_n\right)$ is the duality bracket between $L_p^ {  w}(\mathcal{M};\ell_\infty)$ and $L_{p'}^ {  w}(\mathcal{M};\ell_1)$, we obtain the desired estimate
$$ \|x\|_{L_p^ {  w}(\mathcal{M};\ell_\infty)}\leq c_p[ {  w}\,]_{A_p}^{1/(p-1)}\|x\|_{L_p^ {  w}(\mathcal{M})}$$
for positive $x$. The passage to general operators follows from a standard argument.
\end{proof}

Next, we turn to the case $2<p<\infty$. In this case, we have to invent a different method. Moreover, this argument does not work for $1\leq p<2.$

\begin{proof}[Proof of Theorem \ref{ref} for $p>2$] Fix $ {  w}\in A_p$. By standard decomposition, it is enough to show the claim for positive operators $x\in L_p^ { {w}}(\mathcal{M})$. Our goal is to majorize the martingale $(x_n)_{n\geq 0}$ by an operator, whose norm  in $L_p^ { {w}}(\mathcal{M})$ is not bigger than $[w]_{A_p}^{1/(p-1)}\|x\|_{L_p^ { {w}}(\mathcal{M})}$, up to some constant depending only on $p$.

 We begin with the observation that  $x^{p-1} { {v}}^{1-p}$ is positive and belongs to $L_{p'}^ { {v}}(\mathcal{M})$: this is due to the identity $\|x^{p-1} {  v}^{1-p}\|_{L_{p'}^ {  v}(\mathcal{M})}=\|x\|_{L_p^ {  w}(\mathcal{M})}^{p-1}$. Thus,  we may apply Doob's inequality in $L_{p'}^ {  v}(\mathcal M)$ to the nonnegative martingale  $\left(\mathcal{E}_n^{ {  v}}\left(x^{p-1} {  v}^{1-p}\right)\right)_{n \geq 0}$ in $(\mathcal{M},\tau^ {  v})$, obtaining an operator $a$ such that $ \mathcal{E}_n^{ {  v}}\left(x^{p-1} {  v}^{1-p}\right)\leq a$ for every $n \geq 0$ and
\begin{equation}\label{bounda1}
\|a\|_{L_{p'}^ {  v}(\mathcal{M})}\leq c_p\|x\|_{L_p^ {  w}(\mathcal{M})}^{p-1}.
\end{equation}
 Next, we apply Doob's inequality again, this time in $L_{p'}^{ { {w}}}(\mathcal{M})$, to the nonnegative martingale $\left(\mathcal{E}_n^{ {  w}}\left(a {  w}^{-1}\right)\right)_{n \geq 0}$. As the result, we get an operator $b$ such that $\mathcal{E}_n^{ {  w}}\left(a  { {w}}^{-1}\right)\leq b$ for $n \geq 0$ and whose norm satisfies
\begin{equation}\label{boundb1}
\|b\|_{L_{p'}^ {  w}(\mathcal{M})} \leq c_{p}\|a {  w}^{-1}\|_{L_{p'}^ {  w}\left(\mathcal M\right)}=c_p\|a\|_{L_{p'}^ { {v}}(\mathcal{M})}.
\end{equation}
Using the change of measure formula \eqref{1}, the fact that $ \mathcal{E}_n\left( {  w}\right)\left(\mathcal{E}_n\left( {  v}\right)\right)^{p-1}\leq [ {  w}\,]_{A_p}$ and the estimate $\mathcal{E}^{ {  v}}_n\left(x {  v}^{-1}\right)\leq \mathcal{E}^{ {  v}}_n\left(x^{p-1} {  v}^{1-p}\right)^{1/(p-1)}$ which follows from the operator concavity of the function $t\mapsto t^{1/(p-1)}$ (here we use the assumption $p\geq 2$), we obtain
\begin{align*}
[ {  w}\,]^{-\frac{1}{p-1}}_{A_p}x_n &\leq  \Biggl(\left(\mathcal{E}_n\left( {  w}\right)\right)^{-1}\left(\mathcal{E}_n\left( {  v}\right)\right)^{1-p}\left(\mathcal{E}_n\left(x\right)\right)^{p-1}\Biggr)^{\frac{1}{p-1}} \\
&=  \left(\mathcal{E}_n\left( {  w}\right)\right)^{-\frac{1}{p-1}}\,\mathcal{E}^{ {  v}}_n\left(x {  v}^{-1}\right) \\
& \leq  \left(\mathcal{E}_n\left( {  w}\right)\right)^{-\frac{1}{p-1}}\left(\mathcal{E}^{ {  v}}_n\left(x^{p-1} {  v}^{1-p}\right)\right)^\frac{1}{p-1}.
\end{align*}
However, by the definition of $a$ and the operator monotonicity of the function $t\mapsto t^{1/(p-1)}$ (again, here we use the assumption $p\geq 2$) we get
$$\Big(\mathcal{E}^{ {  v}}_n\left(x^{p-1} {  v}^{1-p}\right)\Big)^\frac{1}{p-1}=\Big(\mathcal{E}_n\Big[\mathcal{E}^{ {  v}}_n\left(x^{p-1} {  v}^{1-p}\right)\Big]\Big)^\frac{1}{p-1}\leq \big(\mathcal{E}_n(a)\big)^\frac{1}{p-1}.$$
Therefore, we can proceed with the previous bound as follows:
\begin{align*}
[ {  w}\,]^{-\frac{1}{p-1}}_{A_p}x_n &\leq   \Big(\mathcal{E}_n\left( {  w}\right)^{-1} \mathcal{E}_n(a)\Big)^{\frac{1}{p-1}}= \Big(\mathcal{E}_n^{ {  w}}\left(a {  w}^{-1}\right)\Big)^{\frac{1}{p-1}} \leq b^{\frac{1}{p-1}},
\end{align*}
where the last bound is due to the definition of $b$ and the operator monotonicity of $t\mapsto t^{1/(p-1)}$. Thus we have obtained the majorant $[ {  w}\,]^{\frac{1}{p-1}}_{A_p}b^{\frac{1}{p-1}}$ for the nonnegative martingale $(x_n)_{n \geq 0}$, and it remains to apply \eqref{bounda1} and \eqref{boundb1} to get
\begin{displaymath}
\|b^{\frac{1}{p-1}}\|_{L_p^ {  w}\left(\mathcal{M}\right)} = \|b\|_{L_{p'}^ {  w}\left(\mathcal{M}\right)}^{\frac{1}{p-1}} \leq c_{p}^{\frac{1}{p-1}}\|a\|^{\frac{1}{p-1}}_{L_{p'}^ {  v}\left(\mathcal{M}\right)} \leq  c_{p}^\frac{2}{p-1}\|x\|_{L_{p}^ {  w}\left(\mathcal M\right)}.
\end{displaymath}
That is, we have found the majorant of $(x_n)_{n\geq 0}$ whose $L_p^ { {w}}$ norm is bounded by $c_{p}^{\frac{2}{p-1}}[ {  w}\,]^{\frac{1}{p-1}}_{A_p}\|x\|_{L_p^ { {w}}(\mathcal{M})}$, as desired.
\end{proof}

We are ready to complete the proof of Theorem \ref{doob2}.

\begin{proof}[Proof of Theorem \ref{doob2} for $1<p<2$] Again, we proceed by duality. Fix a weight $ {  w}\in A_p$, an arbitrary finite sequence $(a_n)_{n\geq 0}$ of positive operators contained in $L_p^ {  w}(\mathcal{M})$ and any $g\in L_{p'}^ {  v}(\mathcal{M})$ of norm one. By Theorem \ref{ref}, there exists a majorant $b$ of the  martingale $(\mathcal{E}_n(g))_{n\geq 0}$, satisfying $\|b\|_{L_{p'}^ {  v}(\mathcal{M})}\leq c_{p'}[v]_{A_{p'}}^{1/(p'-1)}=c_{p'}[w]_{A_p}$. Therefore, by H\"older's inequality,
$$ \tau\left(\sum_{n\geq 0} \mathcal{E}_n\left(a_n\right)g\right)=\tau\left(\sum_{n\geq 0}a_n\mathcal{E}_n(g)\right)\leq \tau\left(\sum_{n\geq 0}a_n b\right)\leq c_{p'}[w]_{A_p}\left\|\sum_{n \geq 0}a_n\right\|_{L_p^ {  w}(\mathcal{M})}.$$
The proof is completed by taking the supremum over all $g$ as above.
\end{proof}

The above proof works without any assumption on the regularity of the filtration. We would like to conclude this section by an example showing that in this general context, the standard self-improving properties and reverse H\"older inequalities may fail for $A_p$ weights.

\begin{remark}\label{nonhomog}
Consider the sequence  $a_n=2^{-n}(n!)^{-1}$, $n=0,\,1,\,2,\,\ldots$.
On the (commutative) probability space $([0,1],\mathcal{B}([0,1]),|\cdot|)$, consider the filtration $(\F_n)_{n\geq 0}$, where $\F_n$ is generated by the intervals $[0,a_n]$, $(a_n,a_{n-1}]$, $(a_{n-1},a_{n-2}]$, $\ldots$, $(a_1,a_0]$. Let $w$ be the weight given by $w=\sum_{n=0}^\infty n! \chi_{(a_{n+1},a_n]}$. This is an $A_1$ weight with $[w]_{A_1}\leq 2$: indeed, all the atoms of the filtration are of the form $(a_{n+1},a_n]$ or $[0,a_n]$ for some $n\geq 0$, and
\begin{align*}
 \frac{1}{|(a_{n+1},a_n]|}\int_{a_{n+1}}^{a_n} w\mbox{d}x\leq \frac{1}{|[0,a_n]|}\int_0^{a_n} w\mbox{d}x&=2^n\cdot n!\sum_{k=n}^\infty k! \frac{2k+1}{2^{k+1}(k+1)!}\\
&\leq 2^n\cdot n!\sum_{k=n}^\infty \frac{1}{2^k}=2\cdot n!=2\operatorname*{essinf}_{[0,a_n]}w=2\operatorname*{essinf}_{(a_{n+1},a_n]}w.
\end{align*}
Furthermore, it is evident that for any $\alpha>1$, the function $w^\alpha$ is not integrable: the series
$$ \sum_{n=0}^\infty \frac{(n!)^\alpha(2n+1)}{2^{n+1}(n+1)!}$$
diverges. Therefore, $w$ cannot satisfy reverse H\"older inequality. Similarly, the self-improvement property does not hold. Given any $1<p<\infty$, we know that $w\in A_{p'}$ (since $A_{p'}\subset A_1$) and hence the dual weight $v=w^{1/(1-p')}=w^{1-p}$ belongs to $A_p$. However, if $v$ lied in $A_{p-\e}$ for some $\e>0$, then $v^{1/(1-p+\e)}=w^{(1-p)/(1-p+\e)}$ would be integrable, a contradiction.
\end{remark}

\subsection{A weighted weak-type bound}

The following weighted weak-type inequality is inspired by the  result due to Cuculescu \cite{cucu}. As we mentioned before, the projection $I-q_\lambda$ plays the role of the indicator function of the event $\{\sup_{n\geq 0}|x_n|\geq \lambda\}$. Therefore, this result can be regarded as a noncommutative probabilistic version of \eqref{weightweak}.

\begin{theorem}\label{weak_theorem}
Let $1\leq p<\infty$ and $ {w} \in A_p$. Then for any positive $x \in L_p^ {w}\left(\mathcal{M}\right)$ and any $\lambda > 0$ there exists a projection $q \in \mathcal{M}$ such that $q\mathcal{E}_n\left(x\right)q \leq \lambda$ for all $n \geq 0$ and
\begin{equation}\label{weak_type}
\lambda\Big[\tau^ {w}\left(I-q\right)\Big]^{1/p} \leq [ {w}]_{A_p}^{1/p}\|x\|_{L_p^ {w}\left(\mathcal M\right)}.
\end{equation}
The dependence $[ {w}]_{A_p}^{1/p}$ on the characteristic cannot be improved (i.e., the exponent $1/p$ cannot be decreased) already in the commutative case.
\end{theorem}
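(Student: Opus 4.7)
The plan is to adapt Cuculescu's classical weak-type construction to the weighted trace $\tau^w$, finishing with noncommutative H\"older and the $A_p$ condition to recover the optimal exponent $1/p$. First, I would apply Cuculescu's theorem to the positive martingale $x_n=\mathcal{E}_n(x)$ at the level $\lambda$, producing a decreasing sequence of projections $(q_n)_{n\ge 0}$ in $\mathcal{M}_n$ with $q_n x_n q_n\le\lambda q_n$, and set $q=\bigwedge_n q_n$. Standard arguments give $q\mathcal{E}_n(x)q\le\lambda q$ for every $n$, and the pairwise orthogonal jumps $p_n=q_{n-1}-q_n$ (with $q_{-1}=I$) lie in $\mathcal{M}_n$, sum to $I-q$, and satisfy the reverse bound $p_n x_n p_n\ge \lambda p_n$ coming from the spectral construction.

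Multiplying this reverse bound by the positive central operator $w$ and taking the trace yields $\lambda\tau^w(p_n)\le \tau(p_n x_n w)$. Since $\mathcal{E}_n(x)p_n\in\mathcal{M}_n$, the identity $\tau(ab)=\tau(\mathcal{E}_n(a)b)$ for $b\in\mathcal{M}_n$ replaces the external $w$ by $\mathcal{E}_n(w)$, giving $\lambda\tau^w(p_n)\le\tau(x\,p_n\mathcal{E}_n(w))$. Summing over $n$, inserting the commuting factors $w^{1/p}\cdot w^{-1/p}$ between $x$ and the remaining factors, and invoking noncommutative H\"older with exponents $(p,p')$ reduces the task to controlling a single $L_{p'}$-norm:
\begin{displaymath}
\lambda\,\tau^w(I-q)\le \|x\|_{L_p^w(\mathcal{M})}\,\Big\|w^{-1/p}\sum_{n\ge 0}p_n\mathcal{E}_n(w)\Big\|_{L_{p'}(\mathcal{M})}.
\end{displaymath}

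The core step, where the $A_p$ characteristic enters with the correct exponent, is to show that this last norm is at most $[w]_{A_p}^{1/p}\bigl(\tau^w(I-q)\bigr)^{1/p'}$. Because the $p_n$ are pairwise orthogonal and $w,\mathcal{E}_n(w)$ are central, functional calculus turns the $p'$-th power of the norm into $\sum_n \tau(p_n\mathcal{E}_n(w)^{p'}v)$ with $v=w^{1/(1-p)}$, and the modular property of $\mathcal{E}_n$ rewrites each summand as $\tau(p_n\mathcal{E}_n(w)^{p'}\mathcal{E}_n(v))$. Raising the scalar inequality $\mathcal{E}_n(w)\mathcal{E}_n(v)^{p-1}\le [w]_{A_p}$ to the power $p'-1$ (legitimate because $\mathcal{E}_n(w)$ and $\mathcal{E}_n(v)$ are central, so operator-monotonicity is not an issue) and multiplying by $\mathcal{E}_n(w)$ delivers the pointwise bound $\mathcal{E}_n(w)^{p'}\mathcal{E}_n(v)\le [w]_{A_p}^{p'-1}\mathcal{E}_n(w)$; summing and using $(p'-1)/p'=1/p$ completes this step. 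Combining everything and dividing by $(\tau^w(I-q))^{1/p'}$ — applied first to the finite truncations $q_N$ to avoid a zero/infinity ambiguity and then letting $N\to\infty$ — yields $\lambda\bigl(\tau^w(I-q)\bigr)^{1/p}\le [w]_{A_p}^{1/p}\|x\|_{L_p^w(\mathcal{M})}$. The borderline case $p=1$ bypasses H\"older entirely and follows directly from $\mathcal{E}_n(w)\le [w]_{A_1}w$. The subtlest point I anticipate is the clean reduction of the $L_{p'}$-norm of the orthogonal sum via functional calculus; once that is in place, the $A_p$ bookkeeping is routine.
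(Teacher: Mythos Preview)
Your proposal is correct and follows essentially the same route as the paper's proof: Cuculescu's projections, the reverse bound $p_n\mathcal{E}_n(x)p_n\ge\lambda p_n$, passage from $w$ to $\mathcal{E}_n(w)$ via the modular property, H\"older with exponents $(p,p')$, orthogonality of the $p_n$ to compute the $L_{p'}$-norm, and the $A_p$ inequality in the form $\mathcal{E}_n(w)^{p'}\mathcal{E}_n(v)\le[w]_{A_p}^{p'-1}\mathcal{E}_n(w)$. Your handling of the finiteness issue (truncating to $q_N$ before dividing) and the separate treatment of $p=1$ also match the paper.
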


\begin{proof}
We study the case $p>1$ only; the argument in the boundary case $p=1$ is analogous, and we leave the details to the reader.
By homogeneity, it is enough to consider the case $\lambda = 1$. Futhermore, using the $\sigma$-finiteness of $(X,\F_0,\mu)$, we may assume that $\mu(X)<\infty$. We recall the construction of Cuculescu's projections: let $q_{-1}$ = $I$ and for $n \geq 0$ define $q_n$ inductively by the equation
\begin{displaymath}
q_n = q_{n-1}{I}_{\left[0,1\right]}\left(q_{n-1}\mathcal{E}_n\left(x\right)q_{n-1}\right).
\end{displaymath}

The sequence $\left(q_n\right)_{n\geq -1}$ is nonincreasing and it enjoys following properties (for detailed proofs, see \cite{cucu} or \cite{rand}):
\begin{enumerate}[\rm (i)]
\item for every $n \geq 0$, $q_n \in \mathcal{M}_n$;
\item $q_n$ commutes with $q_{n-1}\mathcal{E}_n\left(x\right)q_{n-1}$;
\item $q_n\mathcal{E}_n\left(x\right)q_n \leq q_n$;
\item $\left(q_{n-1}-q_n\right)\mathcal{E}_n\left(x\right)\left(q_{n-1}-q_n\right) \geq q_{n-1}-q_n$.
\end{enumerate}
Set $q = \bigwedge_{n=0}^\infty q_n$. Then $q\mathcal{E}_n\left(x\right)q = qq_n\mathcal{E}_n\left(x\right)q_nq  \leq qq_nq \leq I$, so by the above properties, we obtain
\begin{align*}
\tau^ {  w}\left(I-q_n\right) = \sum_{k=0}^n \tau\left(\left(q_{k-1}-q_k\right) {  w}\right) &\leq \sum_{k=0}^n \tau\left(\left(q_{k-1}-q_k\right)\mathcal{E}_k\left(x\right)\left(q_{k-1}-q_k\right) {  w}\right) \\
& = \sum_{k=0}^n \tau\left(\left(q_{k-1}-q_k\right)x\left(q_{k-1}-q_k\right)\mathcal{E}_k\left( {  w}\right)\right)\\
& =  \tau\left(\sum_{k=0}^n \left(q_{k-1}-q_k\right)\mathcal{E}_k\left( {  w}\right) {  w}^{-\frac{1}{p}}x {  w}^{\frac{1}{p}}\right),
\end{align*}
where in the last line we have exploited the tracial property and commuting of $ {  w}$ with all elements of $\mathcal{M}$.
By H\"older's inequality, mutual orthogonality of projections $\left(\left(q_{k-1}-q_k\right)\right)_{k \geq 0}$ and the definition of $[w]_{A_p}$, we may proceed as follows (recall that $ {  v}= {  w}^{1-p'}$ is the dual weight of $ {  w}$):
\begin{align*}
\tau^ {  w}\left(I-q_n\right) &\leq \tau\left(\left(\sum_{k=0}^n \left(q_{k-1}-q_k\right)\mathcal{E}_k\left(w\right)\right)^{p'} {  w}^{-\frac{1}{p-1}}\right)^{\frac{1}{p'}}\tau\left(x^p {  w}\right)^{\frac{1}{p}}\\
& = \left(\sum_{k=0}^n\tau\left( \left(q_{k-1}-q_k\right)\left(\mathcal{E}_k\left( {  w}\right)\right)^{p'} {  v}\right)\right)^{\frac{1}{p'}}\|x\|_{L_p^ {  w}\left(\mathcal{M}\right)}\\
& = \left(\sum_{k=0}^n\tau\left( \left(q_{k-1}-q_k\right)\mathcal{E}_k\left( {  w}\right)\left(\mathcal{E}_k\left( {  w}\right)\right)^{\frac{1}{p-1}}\mathcal{E}_k\left( {  v}\right)\right)\right)^{\frac{1}{p'}}\|x\|_{L_p^ {  w}\left(\mathcal{M}\right)}\\
& \leq \left([ {  w}]_{A_p}^{\frac{1}{p-1}}\sum_{k=0}^n\tau\left(\left(q_{k-1}-q_k\right)\mathcal{E}_k\left( {  w}\right)\right)\right)^{\frac{1}{p'}}\|x\|_{L_p^ {  w}\left(\mathcal{M}\right)}\\
& = [ {  w}]_{A_p}^{\frac{1}{p}} \left(\sum_{k=0}^n \tau\left(\left(q_{k-1}-q_k\right) {  w}\right)\right)^{\frac{1}{p'}} \|x\|_{L_p^ {  w}\left(\mathcal{M}\right)}\\
& = [ {  w}]_{A_p}^{\frac{1}{p}} \left( \tau^ {  w}\left(I-q_n\right)\right)^{\frac{1}{p'}} \|x\|_{L_p^ {  w}\left(\mathcal{M}\right)}.
\end{align*}
But the assumption $\mu(X)<\infty$ implies $w\in L_1$ and hence the trace $\tau^ {  w}\left(I-q_n\right)$ is finite. Therefore, multiplying both sides by $\left( \tau^ {  w}\left(I-q_n\right)\right)^{-\frac{1}{p'}}$ we obtain $\left( \tau^ {  w}\left(I-q_n\right)\right)^{\frac{1}{p}} \leq [ {  w}]_{A_p}^{\frac{1}{p}} \|x\|_{L_p^ {  w}\left(\mathcal{M}\right)}$, and passing with $n \to \infty$ gives the weighted weak type inequality.
\end{proof}

\begin{remark}\label{sigma-fin}
In our considerations below, we will need versions of Theorem \ref{ref} and Theorem \ref{weak_theorem} for filtrations indexed by $\mathbb{Z}$. Using \eqref{not_difficult}, one easily obtains these statements under the assumption that $(X,\F_n,\mu)$ is $\sigma$-finite for each $n$.
\end{remark}

\section{Maximal inequalities on metric spaces}
In this section, as an application of Theorem \ref{ref}, we establish the noncommutative weighted Hardy-Littlewood maximal inequalities on metric spaces. These results can be considered as noncommutative version of \eqref{weightlp} and \eqref{weightweak}. In particular,  Mei's results \cite[Chapter 3]{Mei}  are extended to the weighted case.

Suppose that $(X,d)$ is a metric space equipped with the $\sigma$-field of its Borel subsets $\F$ and a Radon measure $\mu$. The symbol $B(x,r)=\{y\in X\,:\,d(y,x)\leq r\}$ stands for the closed ball of center $x$ and radius $r$. We assume the non-degeneracy condition $0<\mu(B)<\infty$ for any ball $B$ of positive radius. Furthermore, we will work with measures $\mu$ satisfying the so-called  doubling condition: there exists a finite constant $\kappa$ such that $\mu(B(x,2r))\leq \kappa\mu(B(x,r))$ for all $x\in X$ and $r>0$.

Given $1<p<\infty$ and a weight $w$ on $X$, we say that $w$ satisfies Muckenhoupt's condition $A_p$, if its $A_p$ characteristic
$$ [w]_{A_p}:=\sup_{x\in X,\,r>0} \left(\frac{1}{\mu(B(x,r))}\int_{B(x,r)}w\mbox{d}\mu\right)\left(\frac{1}{\mu(B(x,r))}\int_{B(x,r)}w^{1/(1-p)}\mbox{d}\mu\right)^{p-1}$$
is finite. A weight $w$ belongs to the class $A_1$, if there is a constant $c$ such that for all $r>0$ and all $x\in X$,
$$ \frac{1}{\mu(B(x,r))}\int_{B(x,r)}w\mbox{d}\mu \leq c\operatorname*{essinf}_{B(x,r)}w.$$
The smallest $c$ with the above property is denoted by $[w]_{A_1}$ and called the $A_1$ characteristic of $w$.
 Finally, consider the von Neumann algebra $\mathcal{N}$ and put $\mathcal{M}=L_\infty(X,\F,\mu)\bar{\otimes}\mathcal{N}$. Given $1\leq p<\infty$ and $r>0$, define the averaging operator $\mathcal{A}_r$ acting on locally integrable $f:X\to L_p(\mathcal{N})$ by the formula
$$ \mathcal{A}_rf(x)=\frac{1}{\mu(B(x,r))}\int_{B(x,r)} f\mbox{d}\mu,\qquad x\in X.$$
In particular, if $1\leq p<\infty$ and $w\in A_p$, then $\mathcal{A}_rf$ is well defined for $f\in L_p^w(\mathcal{M})$: any $f\in L_p^w(\mathcal{M})$ is locally integrable as a function from $X$ to $L_1(\mathcal{N})$. Indeed, if $p>1$, then H\"older's inequality gives
$$ \int_{B(x,r)}\|f\|_{L_1(\mathcal{N})}\mbox{d}\mu\leq \|f\|_{L_p^w(\mathcal{M})}\left(\int_{B(x,r)} w^{1/(1-p)}\mbox{d}\mu\right)^{(p-1)/p}<\infty.$$
For $p=1$ the argument is even simpler: $\int_{B(x,r)}\|f\|_{L_1(\mathcal{N})}\mbox{d}\mu\leq \|f\|_{L_1^w(\mathcal{M})}\int_{B(x,r)}w^{-1}\mbox{d}\mu<\infty.$

The main result of this section is stated below. It can be regarded as the noncommutative version of \eqref{weightlp} and \eqref{weightweak}, with the extraction of the optimal dependence on $[w]_{A_p}$.

\begin{theorem}\label{thm_HL}
Let $1\leq p<\infty$ and assume that $w$ is an $A_p$ weight on $X$. Then for any $f\in L_p^w(\mathcal{M})$ and any $\lambda>0$ there is a projection $q\in \mathcal{M}$ satisfying
$$ \lambda \Big[ \tau^ {w}(I-q)\Big]^{1/p}\lesssim_p [w]_{A_p}^{1/p}\|f\|_{L_p^w(\mathcal{M})}$$
and $q\mathcal{A}_rfq\leq \lambda q$ for all $r>0$. Furthermore, if $p>1$, then there exists a constant $c_p$ depending only on $p$ such that for any $f\in L_p(X;L_p(\mathcal{M}))$,
$$ \|(\mathcal{A}_rf)_{r>0}\|_{L_p^w(\mathcal{M},\ell_\infty)}\leq c_p[w]_{A_p}^{1/(p-1)}\|f\|_{L_p^w(\mathcal{M})}.$$
\end{theorem}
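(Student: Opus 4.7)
The plan is to reduce both inequalities to their martingale versions established in Theorems~\ref{ref} and~\ref{weak_theorem} by replacing ball-averages with conditional expectations relative to finitely many dyadic filtrations on $X$.

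First, I would invoke the Hyt\"onen--Kairema construction of adjacent dyadic systems: since $(X,d,\mu)$ is geometrically doubling (a consequence of the doubling measure assumption), there exist a finite number $N=N(\kappa)$ of dyadic systems $\mathcal{D}^1,\dots,\mathcal{D}^N$, each consisting of Borel ``cubes'' indexed by scales $n\in\mathbb{Z}$, with the key covering property that for every ball $B(x,r)$ one can find an index $i$ and a cube $Q\in\mathcal{D}^i$ with $B(x,r)\subset Q$ and $\mu(Q)\leq C\mu(B(x,r))$, where $C$ depends only on $\kappa$. Each $\mathcal{D}^i$ induces a $\mathbb{Z}$-indexed filtration $(\mathcal{F}^i_n)_n$ with conditional expectations $\mathcal{E}^i_n$. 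A standard argument, exploiting the doubling property and the comparability of dyadic cubes to balls, transfers the weight condition: $w$ belongs to the martingale $A_p$ class of each $(\mathcal{F}^i_n)_n$ with characteristic bounded by $C'[w]_{A_p}$, where $C'$ depends only on $\kappa$. The $\sigma$-finiteness hypothesis of Remark~\ref{sigma-fin} is easy to verify, so Theorems~\ref{ref} and~\ref{weak_theorem} apply to each dyadic filtration.

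Next comes the key pointwise domination. Fix a positive $f\in L_p^w(\mathcal{M})$. For any $x\in X$ and $r>0$, picking $i$ and $Q\in\mathcal{D}^i$ as above, monotonicity of the $\mathcal{N}$-valued integral for positive $f$ yields the pointwise operator inequality
\begin{equation*}
\mathcal{A}_r f(x)\leq \frac{\mu(Q)}{\mu(B(x,r))}\,\mathcal{E}^i_{n(Q)}f(x)\leq C\,\mathcal{E}^i_{n(Q)}f(x).
\end{equation*}
From here both bounds follow cleanly. For the strong-type statement I would apply Theorem~\ref{ref} to $(\mathcal{E}^i_n f)_n$ on each filtration, obtaining for each $i$ a majorant $a_i\in L_p^w(\mathcal{M})^+$ with $\mathcal{E}^i_n f\leq a_i$ and $\|a_i\|_{L_p^w(\mathcal{M})}\lesssim_p[w]_{A_p}^{1/(p-1)}\|f\|_{L_p^w(\mathcal{M})}$; setting $a:=C\sum_{i=1}^N a_i$, the displayed bound shows $\mathcal{A}_r f\leq a$ for every $r>0$, and the characterization of $L_p^w(\mathcal{M};\ell_\infty)$ for positive families delivers the claimed estimate (extension to general $f$ is the standard four-term splitting). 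For the weak-type statement, apply Theorem~\ref{weak_theorem} to $(\mathcal{E}^i_n f)_n$ at level $\lambda/C$ to obtain projections $q_i\in\mathcal{M}$ with $q_i\mathcal{E}^i_n f q_i\leq \lambda/C$ and $(\lambda/C)[\tau^w(I-q_i)]^{1/p}\lesssim_p[w]_{A_p}^{1/p}\|f\|_{L_p^w(\mathcal{M})}$. Putting $q:=\bigwedge_{i=1}^N q_i$, the relation $q\leq q_i$ together with the pointwise domination yields $q\mathcal{A}_r f q\leq C q\mathcal{E}^i_{n(Q)}f q\leq \lambda q$ for every $r$, while $I-q\leq \sum_i(I-q_i)$ gives $\tau^w(I-q)\leq\sum_i\tau^w(I-q_i)$, hence the required trace bound with the factor $N^{1/p}$ absorbed into the implicit constant.

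The main technical step, and the one requiring most care, is the dyadic reduction: constructing the Hyt\"onen--Kairema systems on $(X,d,\mu)$ and establishing the transfer of the $A_p$ characteristic from balls to dyadic cubes with a loss depending only on $\kappa$. Once this scaffolding is in place, the whole argument is a direct application of the two weighted noncommutative maximal inequalities proved in Section~3, and the optimal dependence $[w]_{A_p}^{1/(p-1)}$ (respectively $[w]_{A_p}^{1/p}$) is inherited without further loss.
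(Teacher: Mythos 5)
Your proposal is correct and follows essentially the same route as the paper: the Hyt\"onen--Kairema adjacent dyadic systems, transfer of the $A_p$ characteristic to each induced filtration, the pointwise domination $\mathcal{A}_r f\leq C\,\mathcal{E}^k_n f$, and then Theorems~\ref{ref} and~\ref{weak_theorem} applied filtration by filtration with the meet of projections (resp.\ sum of majorants) at the end. The only cosmetic difference is that the paper phrases the strong-type step via the triangle inequality in $L_p^w(\mathcal{M};\ell_\infty)$ rather than by explicitly summing the majorants $a_i$, which is equivalent for positive sequences.
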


Our argument will exploit the following fact proved in \cite[Theorem 4.1]{HK}.

\begin{lemma}\label{partit}
Let $(X,d)$ be the metric space equipped with a Radon measure $\mu$ satisfying the above requirements. Then there exist a constant $C$ and a finite collection of families $\mathcal{P}^1$, $\mathcal{P}^2$, $\ldots$, $\mathcal{P}^N$, where each $\mathcal{P}^k=(\mathcal{P}^k_j)_{j\in \mathbb{Z}}$ is a sequence of partitions of $X$, such that the following holds.

\begin{enumerate}[\rm (i)]
\item For each $1\leq k\leq N$ and each $j\in \mathbb{Z}$, the partition $\mathcal{P}^{k}_{j+1}$ is a refinement of $\mathcal{P}^k_j$.

\item For all $x\in X$ and $r>0$, there is $1\leq k\leq N$, $j\in \mathbb{Z}$ and an element $Q\in \mathcal{P}^k_j$ such that $B(x,r)\subseteq Q$ and $\mu(Q)\leq C\mu(B(x, r))$.

\item Any $Q\in \bigcup_{k,j}\mathcal{P}^k_j$ is contained within some ball $B(x,r)$ such that $\mu(B(x,r))\leq C\mu(Q)$.
\end{enumerate}
\end{lemma}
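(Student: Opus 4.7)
The strategy is to transfer the Hardy–Littlewood averages to the dyadic-like martingale framework supplied by Lemma \ref{partit}, and then feed the resulting filtrations into Theorems \ref{ref} and \ref{weak_theorem}. Fix the $N$ systems $\mathcal{P}^1,\ldots,\mathcal{P}^N$ from Lemma \ref{partit} and set $\mathcal{F}^k_j=\sigma(\mathcal{P}^k_j)$ for $k=1,\ldots,N$ and $j\in\mathbb{Z}$; by condition (i) each $(\mathcal{F}^k_j)_{j\in\mathbb{Z}}$ is a filtration on $X$ whose atoms are the elements of $\mathcal{P}^k_j$, and this atomicity together with the Radon property gives $\sigma$-finiteness on each level, so the $\mathbb{Z}$-indexed versions of the martingale theorems mentioned in Remark \ref{sigma-fin} are available. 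The key observation at this stage is that each martingale $A_p$ characteristic is dominated by the continuous one: property (iii) gives for every atom $Q$ a ball $B\supseteq Q$ with $\mu(B)\leq C\mu(Q)$, and inserting $B$ into the continuous definition of $[w]_{A_p}$ yields a bound of the form $\mathcal{E}^k_j(w)\,\mathcal{E}^k_j(w^{1/(1-p)})^{p-1}\leq C^p[w]_{A_p}$ uniformly in $k,j$.

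The geometric input that drives everything is a pointwise domination of ball averages by dyadic martingale averages. For a positive $f$, fix $x\in X$ and $r>0$; property (ii) provides $k_0=k_0(x,r)$, $j_0=j_0(x,r)$ and an atom $Q\in\mathcal{P}^{k_0}_{j_0}$ with $B(x,r)\subseteq Q$ and $\mu(Q)\leq C\mu(B(x,r))$. Since positivity of $f$ yields $\int_{B(x,r)}f\,d\mu\leq \int_Q f\,d\mu$ in $\mathcal{N}$, we obtain the fiberwise operator inequality
\begin{equation}\label{proposalptwise}
\mathcal{A}_r f(x)\leq C\,\mathcal{E}^{k_0}_{j_0}f(x)\qquad\text{in }\mathcal{N}.
\end{equation}
A standard positive/negative splitting reduces both claims of the theorem to the case of positive $f$.

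For the strong-type estimate, apply Theorem \ref{ref} to each of the $N$ filtrations, producing positive operators $b^1,\ldots,b^N\in L_p^w(\mathcal{M})$ such that $\mathcal{E}^k_j f\leq b^k$ for every $j\in\mathbb{Z}$ and $\|b^k\|_{L_p^w(\mathcal{M})}\lesssim_p [w]_{A_p}^{1/(p-1)}\|f\|_{L_p^w(\mathcal{M})}$. Setting $b:=C\sum_{k=1}^N b^k$, the pointwise inequality \eqref{proposalptwise} applied with the $x$-dependent index $k_0$ yields $\mathcal{A}_r f(x)\leq Cb^{k_0(x,r)}(x)\leq b(x)$ for a.e.\ $x$ and every $r>0$, hence $\mathcal{A}_r f\leq b$ in $\mathcal{M}$. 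The characterization $\|(x_r)\|_{L_p^w(\mathcal{M};\ell_\infty)}=\inf\{\|a\|_{L_p^w}:x_r\leq a\}$ for positive families, together with the reduction \eqref{not_difficult} to finite subfamilies, delivers the claimed bound with constant $\lesssim_p N\cdot[w]_{A_p}^{1/(p-1)}$.

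For the weak-type statement, apply Theorem \ref{weak_theorem} to each filtration at level $\lambda/C$, producing projections $q^1,\ldots,q^N\in\mathcal{M}$ with $q^k\mathcal{E}^k_j(f)q^k\leq(\lambda/C)q^k$ for every $j$ and $(\lambda/C)[\tau^w(I-q^k)]^{1/p}\lesssim_p[w]_{A_p}^{1/p}\|f\|_{L_p^w(\mathcal{M})}$. Put $q:=\bigwedge_{k=1}^N q^k$; then $\tau^w(I-q)\leq\sum_k\tau^w(I-q^k)$ closes the size estimate. To verify $q\mathcal{A}_r fq\leq\lambda q$, pass to the fiber description $\mathcal{M}=L_\infty(X;\mathcal{N})$, where $q\leq q^k$ translates to $q(x)\leq q^k(x)$ in $\mathcal{N}$ for every $k$ and a.e.\ $x$. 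Using \eqref{proposalptwise} with the $x$-dependent index $k_0=k_0(x,r)$ and the identity $q(x)q^{k_0}(x)=q(x)$ forced by $q\leq q^{k_0}$, one computes
\begin{align*}
q(x)\mathcal{A}_r f(x)q(x)&\leq Cq(x)q^{k_0}(x)\mathcal{E}^{k_0}_{j_0}f(x)q^{k_0}(x)q(x)\\
&\leq\lambda\,q(x)q^{k_0}(x)q(x)=\lambda q(x).
\end{align*}
The main subtlety throughout is precisely that the indices $k_0,j_0$ in \eqref{proposalptwise} depend on both $x$ and $r$, so one cannot literally dominate $\mathcal{A}_r f$ by a single martingale average in $\mathcal{M}$; this is bypassed by working fiberwise and exploiting the uniform projection order $q\leq q^k$ (for all $k$) and the pointwise positivity $b^k\geq 0$, which is exactly what makes the intersection/sum of the $N$ objects do their job.
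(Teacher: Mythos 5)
Your text is not a proof of Lemma \ref{partit} at all: it is a proof of Theorem \ref{thm_HL}, the weighted Hardy--Littlewood maximal inequality, and it assumes Lemma \ref{partit} in its very first sentence ("Fix the $N$ systems $\mathcal{P}^1,\ldots,\mathcal{P}^N$ from Lemma \ref{partit}..."). The statement you were asked to prove is the purely geometric one: the existence, on a doubling metric measure space, of finitely many nested sequences of partitions (adjacent dyadic systems) such that every ball $B(x,r)$ is absorbed by some cube of measure at most $C\mu(B(x,r))$ and every cube is contained in a ball of comparable measure. Nothing in your argument establishes properties (i)--(iii); you only use them. So, measured against the assigned statement, the proposal is circular — the entire content of the lemma is missing.

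For the record, the paper does not reprove the lemma either; it quotes it from Hyt\"onen and Kairema \cite{HK} (Theorem 4.1 there). If you wanted to actually prove it, the construction runs roughly as follows: fix $\delta\in(0,1)$ and, for each scale $j\in\mathbb{Z}$, a maximal $\delta^j$-separated family of centers; organize the centers into a tree by assigning each generation-$(j+1)$ point to a nearby generation-$j$ point; the resulting cubes $Q$ (unions of descendants of a center $z$) form nested partitions with $B(z,c\delta^j)\subseteq Q\subseteq B(z,C\delta^j)$, and doubling gives property (iii). Property (ii) genuinely fails for a single system — a ball can straddle cube boundaries at every scale — and this is precisely why one needs finitely many \emph{adjacent} systems obtained by perturbing the choices, so that for every ball some system contains a cube of comparable scale (and hence, by doubling, comparable measure) that swallows it. That construction and the covering argument behind the finite number $N$ are the missing ideas. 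As a side remark, your argument for Theorem \ref{thm_HL} itself is essentially the paper's own proof (the $A_p$-characteristic comparison via (iii), the $(x,r)$-dependent indices handled fiberwise, the intersection of the $N$ Cuculescu-type projections, and the sum of the $N$ majorants), but that theorem was not the statement under review.
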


\begin{proof}[Proof of Theorem \ref{thm_HL}]By a standard decomposition argument, we may assume that $f$ is nonnegative: we have $f(\omega)\geq 0$ for any $\omega\in X$. Let $N$ be the number guaranteed by the above lemma and fix $k\in \{1,2,\ldots,N\}$. For $n\in \mathbb{Z}$, let $\mathfrak{F}^k_n$ be the $\sigma$-field generated by $\mathcal{P}^k_n$ and denote by $\mathcal{E}^k_n$ the associated conditional expectation. Note that the martingale $w=(\mathcal{E}_n^kw)_{n\in \mathbb{Z}}$ satisfies the $A_p$ condition: by Lemma \ref{partit} (iii), for any $Q\in \bigcup_{n\in \mathbb{Z}} \mathcal{P}^k_n$ we have
\begin{align*}
& \left(\frac{1}{\mu(Q)}\int_Q w\right)\left(\frac{1}{\mu(Q)}\int_Q w^{1/(1-p)}\right)^{p-1}\\
&\leq C^p \left(\frac{1}{\mu(B(x,r))}\int_{B(x,r)}w\mbox{d}\mu\right)\left(\frac{1}{\mu(B(x,r))}\int_{B(x,r)}w^{1/(1-p)}\mbox{d}\mu\right)^{p-1}\leq C^p[w]_{A_p},
\end{align*}
where $B(x,r)$ is the ball containing $Q$. An analogous argument works for $p=1$.
 By Theorem \ref{weak_theorem} and Remark \ref{sigma-fin}, applied to the martingale $f=(\mathcal{E}^k_nf)_{n\in \mathbb{Z}}$, for any $\lambda>0$ there exists a projection $q_k$ such that $q_k\mathcal{E}_n^kfq_k\leq \lambda$ for all $n\in \mathbb{Z}$ and $\lambda \tau^{w}(I-q_k)^{1/p}\leq C[w]_{A_p}^{1/p}\|f\|_{L_p^w(\mathcal{M})}$. Take $q=\bigwedge_{k=1}^N q_k $, the projection onto the intersection $\bigcap q_k(H)$. Since $I-\bigwedge_{k=1}^N q_k\leq \sum_{k=1}^N (I-q_k)$, we get
 $$ \lambda \big[\tau^w(I-q)\big]^{1/p}\leq N^{1/p}C[w]_{A_p}^{1/p}\|f\|_{L_p^w(\mathcal{M})}.$$
 Now we apply the second part of Lemma \ref{partit}: given an arbitrary ball $B(x,r)$, there is an associated set $Q$, belonging to some $\mathcal{P}^k_n$. Therefore,
 \begin{equation}\label{bodd}
 \mathcal{A}_rf(x)=\frac{1}{B(x,r)}\int_{B(x,r)}f\mbox{d}\mu\leq \frac{C}{\mu(Q)}\int_Q f\mbox{d}\mu=C\mathcal{E}^k_nf(x)
\end{equation}
 and consequently $q\mathcal{A}_rfq\leq C\lambda$ for all $r$. This proves the weighted weak-type inequality for $(\mathcal{A}_rf)_{r>0}$. Concerning the strong-type estimate, note that \eqref{bodd} yields
\begin{align*}
\|(\mathcal{A}_rf)_{r>0}\|_{L_p^w(\mathcal{M};\ell_\infty)}&\leq C\left\|\left(\sum_{k=1}^N \mathcal{E}^k_nf\right)_{n\in \mathbb{Z}}\right\|_{L_p^w(\mathcal{M};\ell_\infty)}\\
&\leq C\sum_{k=1}^N\left\|\left( \mathcal{E}^k_nf\right)_{n\in \mathbb{Z}}\right\|_{L_p^w(\mathcal{M};\ell_\infty)}\leq C'N[w]_{A_p}^{1/(p-1)}\|f\|_{L_p^w(\mathcal{M})}.
\end{align*}
This gives the claim.
\end{proof}

\begin{remark}
In particular, one may apply the above estimates in the context when $X$ is a locally compact group $G$, equipped an invariant metric $d$ and the right-invariant Haar measure $m$. The averaging operators
$$ \mathcal{A}_rf(g)=\frac{1}{\mu(B(g,r))}\int_{B(g,r)}f(h)\mbox{d}m(h)=\frac{1}{\mu(B(e,r))}\int_{B(e,r)} f(gh)\mbox{d}m(h)$$
appear naturally in the study of ergodic theorems, concerning the action of amenable groups on noncommutative $L_p$ spaces (cf. \cite{HLW}).
\end{remark}

\section{A weighted inequality for maximal singular integrals}

The next application of Theorem \ref{ref} concerns weighted bounds for maximal singular integrals of operator-valued functions in dimension one. Let us start with some motivation. The Hilbert transform $\mathcal{H}$, the fundamental object in harmonic analysis, is an operator which acts on locally integrable functions $f:\R\to \R$ by
$$ \mathcal{H}f(s)=\mbox{p.v.}\frac{1}{\pi}\int_\R \frac{f(t)}{s-t}\mbox{d}t.$$
Here `p.v.' refers to the principal value of the integral: $ \mathcal{H}f(s)=\lim_{\e\downarrow 0} \mathcal{H}^{\e}f(s),$ and
$$ \mathcal{H}^\e f(s)=\frac{1}{\pi}\int_{|s-t|>\e} \frac{f(t)}{s-t}\mbox{d}t$$
is the truncated Hilbert transform. The above limiting procedure makes sense for certain vector-valued functions as well: one can define $\mathcal{H}f$ for $f$ taking values in the so-called UMD Banach spaces. Recall that a Banach space $\mathbb{B}$ is UMD (Unconditional for Martingale Differences), if the following holds. For any (equivalently, for all) $1<p<\infty$, there exists a finite constant $c_{p,\mathbb{B}}$ such that for any (classical, commutative) martingale difference $d=(d_k)_{k\geq 0}$ with values in $\mathbb{B}$, given on some filtered probability space $(\Omega,\F,(\F_k)_{k\geq 0},\mathbb{P})$, and any deterministic sequence $\e=(\e_k)_{k\geq 0}$ with values in $[-1,1]$ we have
$$ \left\|\sum_{k=0}^n \e_kd_k\right\|_{L_p(\Omega;\mathbb{B})}\leq c_{p,\mathbb{B}}\left\|\sum_{k=0}^n d_k\right\|_{L_p(\Omega;\mathbb{B})},\qquad n=0,\,1,\,2,\,\ldots.$$
Here the probability space, as well as the filtration, are allowed to vary. Note that for any $1<p<\infty$ and any von Neumann algebra $\mathcal{N}$, the space $L_p(\mathcal{N})$ is UMD: this follows directly from \eqref{Lp}, applied to $\mathcal{M}=L_\infty(\Omega,\F,\mathbb{P})\overline{\otimes} \mathcal{N}$.
Next, a well-known result of Burkholder \cite{B1.1} asserts that if $\mathbb{B}$ is a UMD space, then $\|\mathcal{H}\|_{L_p(\R;\mathbb{B})\to L_p(\R;\mathbb{B})}\lesssim c_{p,\mathbb{B}}^2.$ Putting all the above facts together, we see that the action of the Hilbert transform on $L_p(\mathcal{M})=L_p(L_\infty(\R)\overline{\otimes} \mathcal{N})$, the space of $L_p(\mathcal{N})$-valued functions on $\R$, is well defined and bounded for $1<p<\infty$. % Actually, it can be shown that  $\|\mathcal{H}\|_{L_p(\mathcal{M})\to L_p(\mathcal{M})}\lesssim (pp')^2.$

One can also study analogous \emph{weighted} $L_p$ estimates for martingale transforms and the Hilbert transform. It follows from the results of Lacey \cite{Lac} that  if $d=(d_k)_{k\geq 0}$ is a martingale difference with values in a UMD space $\mathbb{B}$, $\e=(\e_n)_{n\geq 0}$ is a predictable sequence of signs and $w$ is an $A_p$ weight on $\Omega$, then we have
\begin{equation}\label{w_transform}
 \left\|\sum_{k=0}^n v_kd_k\right\|_{L_p^w(\Omega;\mathbb{B})}\lesssim [w]_{A_p}^{\max\{1/(p-1),1\}}\left\|\sum_{k=0}^n d_k\right\|_{L_p^w(\Omega;\mathbb{B})}.
\end{equation}
Here $\|f\|_{L_p^w(\Omega;\mathbb{B})}=\left(\int_\Omega \|f\|_\mathbb{B}^p\mbox{d}\mathbb{P}\right)^{1/p}$. Moreover, we have $\|\mathcal{H}\|_{L_p^w(\R;\mathbb{B})\to L_p^w(\R;\mathbb{B})}\lesssim [w]_{A_p}^{\max\{(p-1)^{-1},1\}}$. The exponent ${\max\{(p-1)^{-1},1\}}$ is optimal in both estimates above. In particular, specifying $\mathbb{B}=L_p(\mathcal{N})$ and $\mathcal{M}=L_\infty(\R)\overline{\otimes} \mathcal{N}$, as above, we get the corresponding version for noncommutative martingale transforms and
\begin{equation}\label{Ha}
\|\mathcal{H}\|_{L_p^w(\mathcal{M})\to L_p^w(\mathcal{M})}\lesssim [w]_{A_p}^{\max\{(p-1)^{-1},1\}}.
\end{equation}
There is another, related operator, playing an important role in harmonic analysis: the so-called maximal truncation $\mathcal{H}^*$, given by $\mathcal{H}^*f=\sup_{\e>0}|\mathcal{H}^\e f|$. This operator also satisfies the weighted bound \eqref{Ha}, which can be handled with the use of Cotlar's inequality or the direct majorization in terms of sparse operators (see \cite{Lac}). Both these approaches exploit a number of pointwise estimates which cannot be used in the noncommutative context.

The purpose of this section is to establish a noncommutative maximal version of \eqref{Ha} for maximal truncation (with a slightly worse dependence on $[w]_{A_p}$). On the positive side,  we will work in the more general class of convolution-type singular integrals on $\R$. Throughout, we assume that $K:(-\infty,0)\cup(0,\infty)\to \R$ is an odd, twice differentiable function (in the sense that $K'$ is absolutely continuous) which satisfies
\begin{equation}\label{cond1}
 \lim_{s\to \infty} K(s)=\lim_{s\to \infty}K'(s)=0
\end{equation}
and
\begin{equation}\label{cond2}
 s^3K''(s)\in L^\infty(\R).
\end{equation}
We denote by $T_K$ the associated one-dimensional singular integral operator, defined by
$$ T_Kf(s)=\operatorname{p.v.}\int_\R f(t)K(s-t)\mbox{d}t=\lim_{\e\downarrow 0}T_K^\e f(s),$$
where $T_K^\e f(s)$ is the truncation at level $\e$:
$$ T_K^\e f(s)=\int_{|s-t|>\e} f(t)K(s-t)\mbox{d}t.$$
In analogy to the above setting, we may also introduce the maximal truncation $T^*_K$ by $T^*_Kf=\sup_{\e>0}|T^\e_Kf|$. In all the above definitions, $f$ is allowed to be vector-valued.
Note that the choice $K(s)=1/(\pi s)$ brings us back to the context of Hilbert transform.

As shown by Vagharshakyan \cite[Theorem 2.4]{Va}, the operator $T_K$ can be expressed as an average of appropriate one-dimensional dyadic shifts. To recall the necessary definitions, let $\varphi$, $\psi:\R\to \R$ be two functions  supported on the unit interval $[0,1]$ and given there by the formulas
$$ \varphi(x)=\begin{cases}
-1 & \mbox{if }0\leq x<1/4,\\
1 & \mbox{if }1/4\leq x< 3/4,\\
-1 & \mbox{if }3/4\leq x\leq 1
\end{cases}
\qquad \mbox{and}\qquad  \psi(x)=\begin{cases}
7 &\mbox{if }0<x<1/4,\\
-1 & \mbox{if }1/4\leq x< 1/2,\\
1 & \mbox{if }1/2\leq x<3/4,\\
-7 & \mbox{if }3/4\leq x\leq 1.
\end{cases}$$
For any (real or vector-valued) function $f$ on $\R$ and any interval $I=[a,b]$, we define the scaled function $f_I$ by
$$ f_I(x)=\frac{1}{\sqrt{b-a}}f\left(\frac{x-a}{b-a}\right),\qquad x\in \R.$$
For any $\beta=\{\beta_l\}\in \{0,1\}^\mathbb{Z}$ and any $r\in [1,2)$, we define the dyadic grid $\mathbb{D}_{r,\beta}$ to be the following collection of intervals (see \cite{NTV} for the motivation and basic properties of this family):
$$ \mathbb{D}_{r,\beta}=\left\{ r2^n\left([0,1)+k+\sum_{i<n} 2^{i-n}\beta_i\right)\right\}_{n\in\mathbb{Z},k\in \mathbb{Z}}.$$
We equip $\{0,1\}^\mathbb{Z}$ with the uniform probability measure $\mu$, uniquely determined by the requirement
$$ \mu(\{\beta:(\beta_{i_1},\beta_{i_2},\ldots,\beta_{i_n})=a\})=2^{-n}$$
for any $n$, any sequence $i_1<i_2<\ldots<i_n$ of integers and any $a\in \{0,1\}^n$.

The aforementioned result of Vagharshakyan asserts the following.

\begin{theorem}\cite[Theorem 2.4]{Va}
Suppose that the kernel $K$ satisfies \eqref{cond1} and \eqref{cond2}. Then there exists a coefficient function $\gamma:(0,\infty)\to \R$ satisfying
$$ \|\gamma\|_\infty\leq C\|s^2K''(s)\|_\infty$$
such that
\begin{equation}\label{defK}
 K(t-s)=\int_{\{0,1\}^\mathbb{Z}}\int_1^2 \sum_{I\in \mathbb{D}_{r,\beta}} \gamma(|I|)\varphi_I(s)\psi_I(t)\frac{\mbox{d}r}{r}\mbox{d}\mu(\beta)
\end{equation}
for all $s\neq t$. Here $C$ is some absolute constant and the series on the right is absolutely convergent almost everywhere.
\end{theorem}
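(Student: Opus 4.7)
The plan is to compute the right-hand side of \eqref{defK} as a translation-invariant kernel in $u=t-s$, recognise it as a Mellin-type convolution of $\gamma$ against a fixed kernel $\Phi$ built from $\varphi$ and $\psi$, and then invert the convolution.

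First, I would use the invariance of the product measure $\frac{dr}{r}\otimes d\mu(\beta)$ under shifts and dyadic dilations of the grid parameters to argue that the right-hand side of \eqref{defK} depends only on $u=t-s$. Translating $(s,t)$ by a common $a$ corresponds to a measure-preserving reparametrisation of $(r,\beta)$, absorbing the shift into the integers $k$ and the bits $\beta_i$ of the offset; dilating both coordinates by $2$ corresponds to shifting the scale index $n$.

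Next, I would group the sum by interval length $\lambda=r\cdot 2^n$. At fixed scale $\lambda$, averaging the bits $\beta_i$ with $i<n$ spreads the position of the interval of length $\lambda$ uniformly modulo $\lambda$; after changing variables from $(r,n)$ to $\lambda\in(0,\infty)$ via $\sum_n\frac{dr}{r}=\frac{d\lambda}{\lambda}$, the averaged kernel should take the form
\begin{equation*}
F(u)=\int_{0}^{\infty} \gamma(\lambda)\,\frac{1}{\lambda}\,\Phi\!\left(\frac{u}{\lambda}\right)\,\frac{d\lambda}{\lambda},
\end{equation*}
where $\Phi(x)=\int \varphi(y)\psi(y+x)\,dy$ is the cross-correlation of the mean-zero atoms $\varphi$ and $\psi$, a piecewise polynomial supported in $(-1,1)$ that can be written down explicitly from the piecewise-constant formulas for $\varphi,\psi$.

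Finally, I would invert this Mellin convolution $F=\gamma *_{M}\Phi$ and extract the norm bound. Taking Mellin transforms along the critical line yields $\widehat{F}(\xi)=\widehat{\gamma}(\xi)\widehat{\Phi}(\xi)$, so that $\gamma$ is recovered as the inverse Mellin transform of $\widehat{K}/\widehat{\Phi}$. The operation $K(s)\mapsto s^{2} K''(s)$ becomes multiplication by a quadratic polynomial in $\xi$ on the Mellin side; the two moment cancellations built into $\Phi$ (coming from the mean-zero properties of $\varphi,\psi$ and the specific values $\pm 1,\pm 7$) should force $\widehat{\Phi}$ to have a matching double zero at the origin, so that $\widehat{K}/\widehat{\Phi}$ is a bounded Mellin multiplier of norm controlled by $\|s^{2} K''\|_{\infty}$, giving the claimed inequality. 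The main obstacle will be verifying that $\widehat{\Phi}$ has no further zeros on the critical line: this non-vanishing is the delicate step that dictates the otherwise ad-hoc quantitative choices for $\varphi$ and $\psi$. Once $\gamma$ is known to be bounded, the almost-everywhere absolute convergence of the series in \eqref{defK} is immediate, since for each fixed $s\neq t$ only scales $\lambda\geq|t-s|$ contribute and the integrand there decays like $\lambda^{-2}$.
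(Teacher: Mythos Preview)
The paper does not prove this theorem at all: it is quoted verbatim from Vagharshakyan \cite{Va} and used as a black box, so there is no ``paper's own proof'' to compare against.

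That said, your outline is essentially the strategy of \cite{Va}. The reduction to a one-variable Mellin convolution $F=\gamma *_M \Phi$ via averaging over the random-grid parameters is exactly right, including the scaling $\frac{1}{\lambda}\Phi(u/\lambda)$ you wrote down. The inversion step is also the same idea: in \cite{Va} one computes $\Phi$ explicitly (it is piecewise linear on $[-1,1]$, odd, supported there), takes its Mellin transform, and checks that the ratio giving $\gamma$ is a bounded multiplier. You have correctly flagged the genuine content of the argument, namely the non-vanishing of $\widehat{\Phi}$ on the relevant line; this is precisely what the particular numerical values in $\psi$ (the $\pm 7$) are engineered to guarantee, and it is verified in \cite{Va} by an explicit computation rather than by an abstract moment-matching argument. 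Your heuristic that a ``double zero'' of $\widehat{\Phi}$ at the origin matches the quadratic factor coming from $s^2K''$ is the right intuition, but to turn it into a proof you would still need the explicit calculation showing $|\widehat{\Phi}(\xi)|$ is bounded below away from the origin --- that step cannot be bypassed.
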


In other words, $T_K$ can be expressed as an average of the Haar shift operators
$$ T_{r,\beta}f=\sum_{I\in \mathbb{D}_{r,\beta}} \gamma(|I|)\langle f,\varphi_I\rangle \psi_I,$$
where $\langle f,g\rangle=\int_\R fg$. Such objects can be handled with the use of martingale methods.

We are ready to establish the main result of this section. In what follows, $\mathcal{M}$ is the von Neumann algebra $L_\infty(\R)\overline\otimes \mathcal{N}$, and hence $L_p^w(\mathcal{M})$  can be identified with the class of appropriately integrable operator-valued functions on $\R$.

\begin{theorem}
For any $1<p<\infty$ and any kernel $K$ satisfying the above assumptions and any $A_p$ weight $w$ on the real line, we have the estimate
$$ \|(T_K^\e)_{\e>0}\|_{L_p^w(\mathcal{M};\ell_\infty)}\leq \tilde{C}_p[w]_{A_p}^{1/(p-1)+\max\{1/(p-1),1\}}\|f\|_{L_p^w(\mathcal{M})}.$$
\end{theorem}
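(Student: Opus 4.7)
The plan is to decouple the claimed exponent into two factors: $\max\{1/(p-1),1\}$ coming from a weighted $L_p^w(\mathcal{M})$-bound for $T_K$ itself, and the additional $1/(p-1)$ coming from a Cotlar-type reduction of the maximal truncation $(T_K^\e)_{\e>0}$ to $T_K$ plus a maximal function, after which Theorem \ref{thm_HL} absorbs the maximal operator.

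First I would establish $\|T_K f\|_{L_p^w(\mathcal{M})} \lesssim_p [w]_{A_p}^{\max\{1/(p-1),1\}} \|f\|_{L_p^w(\mathcal{M})}$ using Vagharshakyan's representation \eqref{defK}, which exhibits $T_K$ as an average over $(r,\beta)$ of the Haar shifts $T_{r,\beta}f = \sum_{I \in \mathbb{D}_{r,\beta}} \gamma(|I|)\langle f,\varphi_I\rangle \psi_I$ with $\|\gamma\|_\infty$ finite by \eqref{cond2}. Each $\varphi_I$ and $\psi_I$ is supported on $I$, has mean zero, and is constant on the four quarters of $I$, so $T_{r,\beta}$ decomposes into a finite sum of martingale transforms with uniformly bounded coefficients relative to the dyadic filtration generated by $\mathbb{D}_{r,\beta}$. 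Applying the weighted transform bound \eqref{w_transform} in the UMD space $L_p(\mathcal{N})$ (UMD via \eqref{Lp}) and then averaging in $r,\beta$ yields the stated estimate.

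For the maximal truncation, I would then run a Cotlar decomposition. Fixing $s$ and $\e > 0$, I split $f = f\chi_{B(s,2\e)} + f\chi_{B(s,2\e)^c}$ and use the kernel regularity (a mean-value estimate from $s^3K''(s)\in L^\infty$, the decay \eqref{cond1}, and the oddness of $K$) to obtain, after averaging the resulting identity over $y \in B(s,\e)$, a majorisation of the form
$$ T_K^\e f(s) \preceq \mathcal{A}_\e(T_K f)(s) + C\,\mathcal{A}_{2\e}(|f|)(s) $$
in a positive-operator sense compatible with the factorisation defining the $L_p^w(\mathcal{M};\ell_\infty)$-norm. Theorem \ref{thm_HL} applied to both averaging families yields
$$ \|(T_K^\e f)_{\e>0}\|_{L_p^w(\mathcal{M};\ell_\infty)} \lesssim_p [w]_{A_p}^{1/(p-1)}\bigl(\|T_K f\|_{L_p^w(\mathcal{M})} + \|f\|_{L_p^w(\mathcal{M})}\bigr), $$
and substituting the first-step bound produces the claimed exponent $1/(p-1) + \max\{1/(p-1),1\}$.

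The main obstacle is the rigorous execution of the Cotlar decomposition in the operator-valued setting, since the pointwise bounds on $|T_K^\e f - \mathcal{A}_\e T_K f|$ and the sparse domination that underlie the classical argument \cite{Lac} are unavailable. Concretely, the remainder $R_\e f(s) := T_K^\e f(s) - \mathcal{A}_\e(T_K f)(s)$ must be displayed, through a single factorisation, as dominated in operator order by a positive maximal average of $|f|$ uniformly in $\e > 0$. The relevant kernel estimates are scalar, so this should follow from an operator-valued Fubini applied to a mean-value representation of $R_\e f$, the oddness of $K$ and \eqref{cond2} providing a uniform positive scalar kernel majorant with the required $L^1$-norm bound near the diagonal.
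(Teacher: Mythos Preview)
Your proposal has a genuine gap in the Cotlar step, and the paper takes a different route precisely to avoid it.

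The problem is with your remainder term. Writing $R_\e f(s)=T_K^\e f(s)-\mathcal{A}_\e(T_Kf)(s)$ and expanding via Fubini, you obtain an integral $\int f(t)k_\e(s,t)\,dt$ whose scalar kernel $k_\e(s,t)$ has a logarithmic singularity as $|t-s|\to\e$. Concretely, for the Hilbert kernel $K(u)=1/(\pi u)$ one has $\frac{1}{2\e}\int_{s-\e}^{s+\e}K(y-t)\,dy=\frac{1}{2\pi\e}\log\bigl|\frac{s+\e-t}{s-\e-t}\bigr|$, which blows up at $t=s\pm\e$. Testing against a positive operator-valued $f$ concentrated near $t=s+\e$ shows that $\pm R_\e f(s)$ is of order $\mathcal{A}_{2\e}(|f|)(s)\cdot|\log(h/\e)|$, where $h$ is the width of the support of $f$; hence no two-sided operator-order bound $-C\,M(|f|)(s)\le R_\e f(s)\le C\,M(|f|)(s)$ is available. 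The classical Cotlar argument does \emph{not} proceed by averaging over $y$: it selects a single ``good'' $y\in B(s,\e)$ at which both $|T_Kf(y)|\le C\,M(T_Kf)(s)$ and $|T_K(f\chi_{B(s,\e)})(y)|\le C\,M|f|(s)$, the latter via the weak-type $(1,1)$ bound for $T_K$. That pointwise good-point selection has no operator-valued substitute, which is exactly why the paper remarks that Cotlar's inequality and sparse domination ``exploit a number of pointwise estimates which cannot be used in the noncommutative context.''

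The paper's argument bypasses Cotlar entirely by observing that the Vagharshakyan representation already encodes the truncation as a martingale partial sum. Since $\varphi_I(s)\psi_I(t)=0$ whenever $|I|\le\e$ and $|s-t|>\e$, one has
\[
T_K^\e f(s)=\int_{\{0,1\}^{\mathbb Z}}\int_1^2\sum_{I\in\mathbb D_{r,\beta},\,|I|>\e}\gamma(|I|)\langle f,\varphi_I\rangle\psi_I(s)\,\frac{dr}{r}\,d\mu(\beta).
\]
After splitting over the parity of $\log_2(|I|/r)$, the inner sum $\sum_{|I|>\e}$ is a genuine martingale in the cut-off parameter. Thus $\|(T_K^\e f)_{\e>0}\|_{L_p^w(\mathcal{M};\ell_\infty)}$ is controlled directly by the weighted Doob inequality (Theorem~\ref{ref}), picking up the factor $[w]_{A_p}^{1/(p-1)}$, times the $L_p^w$-norm of the full Haar shift, which is then handled by the UMD-valued weighted bound (Theorem~\ref{UMD_theorem}) with factor $[w]_{A_p}^{\max\{1/(p-1),1\}}$. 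No pointwise comparison between $T_K^\e f$ and averages of $T_Kf$ is ever needed.
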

\begin{proof}
%The optimality of the exponent is clear: it is already the best in the non-maximal case (for the Hilbert transform, corresponding to the choice $K(x)=1/x$). By standard approximation, we may and do assume that $f$ is supported on some interval $J$. Furthermore, we may assume that  $\int_\R f=0$, adding to $f$, if necessary, the ``correction function'' of the form $a\chi_{[M,M+1]}$ and then sending $M$ to infinity.
Fix $\e>0$ and $f\in L_p^w(\mathcal{M})$: we may treat it as a function on $\R$ with values in $L_0(\mathcal{N})$. Take two real numbers $s$, $t$ satisfying $|s-t|>\e$. Since both $\varphi_I$ and $\psi_I$ are supported on $I$, we see that $\varphi_I(s)\psi_I(t)=0$ if $|I|\leq \e$ and we may rewrite the identity \eqref{defK} in the form
$$ K(t-s)=\int_{\{0,1\}^\mathbb{Z}}\int_1^2 \sum_{I\in \mathbb{D}_{r,\beta}, |I|> \e} \gamma(|I|)\varphi_I(s)\psi_I(t)\frac{\mbox{d}r}{r}\mbox{d}\mu(\beta).$$
Therefore we have
$$ T_K^\e f(s)=\int_{\{0,1\}^\mathbb{Z}}\int_1^2 \sum_{I\in \mathbb{D}_{r,\beta}, |I|>\e} \gamma(|I|)\langle f,\varphi_I\rangle \psi_I(s)\frac{\mbox{d}r}{r}\mbox{d}\mu(\beta)$$
and hence by Minkowski's inequality,
\begin{align*}
 \|(T_K^\e f)_{\e>0}\|_{L_p^w(\mathcal{M};\ell_\infty)}&\leq \int_{\{0,1\}^\mathbb{Z}}\int_1^2 \left\|\left(\sum_{I\in \mathbb{D}_{r,\beta}, |I|>\e} \gamma(|I|)\langle f,\varphi_I\rangle \psi_I\right)_{\e>0}\right\|_{L_p^w(\mathcal{M};\ell_\infty)}\frac{\mbox{d}r}{r}\mbox{d}\mu(\beta)\\
 &\leq \int_{\{0,1\}^\mathbb{Z}}\int_1^2 \left\|\left(\sum_{I\in \mathbb{D}_{r,\beta}, |I|>\e,\atop  I\text{ odd}} \gamma(|I|)\langle f,\varphi_I\rangle \psi_I\right)_{\e>0}\right\|_{L_p^w(\mathcal{M};\ell_\infty)}\frac{\mbox{d}r}{r}\mbox{d}\mu(\beta)\\
 &\quad +\int_{\{0,1\}^\mathbb{Z}}\int_1^2 \left\|\left(\sum_{I\in \mathbb{D}_{r,\beta},  |I|>\e, \atop I\text{ even}} \gamma(|I|)\langle f,\varphi_I\rangle \psi_I\right)_{\e>0}\right\|_{L_p^w(\mathcal{M};\ell_\infty)}\frac{\mbox{d}r}{r}\mbox{d}\mu(\beta).
 \end{align*}
 Here and below, $I\in \mathbb{D}_{r,\beta}$ is called odd (even), if so is the number $\log_2(|I|/r)$.
From now on, we will restrict our analysis to `even sums' only; the first summand in the last line above can be dealt with analogously. The sequence
$$ \sum_{I\in \mathbb{D}_{r,\beta}, |I|\geq 4^{n},\atop  I\text{ even}} \gamma(|I|)\langle f,\varphi_I\rangle \psi_I,\qquad n\in \mathbb{Z},$$
is a martingale with respect to its natural filtration. It is crucial here that we assume the `double spread' on $\log_2(|I|/r)$ (i.e., we assume that $\log_2(|I|/r)$ has the fixed parity): thanks to this condition, $(\sum_{I\in \mathbb{D}_{r,\beta}, |I|= r4^{n}} \gamma(|I|)\langle f,\varphi_I\rangle \psi_I)_{n\in \mathbb{Z}}$ is a martingale difference sequence. The application of Theorem \ref{ref} yields
$$ \left\|\left(\sum_{I\in \mathbb{D}_{r,\beta}, |I|>\e,\atop  I\text{ even}} \gamma(|I|)\langle f,\varphi_I\rangle \psi_I\right)_{\e>0}\right\|_{L_p^w(\mathcal{M};\ell_\infty)}\leq C_p [w]_{A_p}^{1/(p-1)}\left\|\sum_{I\in \mathbb{D}_{r,\beta},\atop  I\text{ even}} \gamma(|I|)\langle f,\varphi_I\rangle \psi_I\right\|_{L_p^w(\mathcal{M})}.$$
The next step is to prove that the right-hand side is controlled by $\|f\|_{L_p^w(\mathcal{M})}$. This will follow from the Theorem \ref{UMD_theorem} below.
\end{proof}

From now on, we move to the classical context; all the functions and processes considered below are commutative.

\begin{theorem}\label{UMD_theorem}
Suppose that $\mathbb{B}$ is a UMD space and $f:\R\to \mathbb{B}$ is a Bochner integrable function. Then for $1<p<\infty$ and any $A_p$ weight $w$ on $\R$ we have
\begin{equation}\label{weightedd}
 \left\|\sum_{I\in \mathbb{D}_{r,\beta},\atop  I\text{ even}} \gamma(|I|)\langle f,\varphi_I\rangle \psi_I\right\|_{L_p^w(\R;\mathbb{B})}\leq C_p\|\gamma\|_\infty [w]_{A_p}^{\max\{1/(p-1),1\}}\|f\|_{L_p^w(\R;\mathbb{B})}.
 \end{equation}
The same estimate holds if the sum on the left is taken over $I\in \mathbb{D}_{r,\beta}$ with odd $I$.
\end{theorem}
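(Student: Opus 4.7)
The plan is to realize the operator $Tf := \sum_{I\text{ even}} \gamma(|I|)\langle f,\varphi_I\rangle \psi_I$ as a dyadic Haar shift of complexity at most one, and then reduce its weighted $L_p$-bound to the weighted martingale transform estimate \eqref{w_transform}. A direct computation from the piecewise-constant definitions of $\varphi$ and $\psi$ on $[0,1]$ yields the Haar identities
\[
 \varphi_I = \tfrac{1}{\sqrt 2}\bigl(h_{I_r}-h_{I_\ell}\bigr),\qquad \psi_I = 3\,h_I + 2\sqrt 2\,\bigl(h_{I_\ell}+h_{I_r}\bigr),
\]
where $h_J$ denotes the $L^2$-normalized Haar function on $J$ and $I_\ell, I_r \in \mathbb{D}_{r,\beta}$ are the two dyadic halves of the (even) interval $I$; note that they also lie in $\mathbb{D}_{r,\beta}$ since $I$ is even. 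Substituting expresses $T$ as a sum of six elementary operators of the form $\sum_{I\text{ even}} \pm\gamma(|I|)\langle f, h_{J(I)}\rangle h_{K(I)}$ with $J(I), K(I)\in\{I, I_\ell, I_r\}$.

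The three diagonal pieces (those with $J(I)=K(I)$) reindex as $f\mapsto\sum_L\varepsilon_L\langle f, h_L\rangle h_L$, where $\varepsilon_L$ is a sign sequence predictable with respect to the dyadic filtration associated with $\mathbb{D}_{r,\beta}$ and bounded by $C\|\gamma\|_\infty$. Each is therefore a scalar martingale transform of the Haar martingale of $f$, and \eqref{w_transform} immediately yields the bound $C\|\gamma\|_\infty[w]_{A_p}^{\max\{1/(p-1),1\}}\|f\|_{L_p^w(\R;\mathbb{B})}$ for each of them.

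The three off-diagonal pieces constitute the main obstacle, since they genuinely mix distinct Haar directions. To make their structure transparent, I would pass, inside each even parent $I$, to the orthonormal basis $\{h_I, H_I, \tilde H_I\}$ of $\mathrm{span}(h_I,h_{I_\ell},h_{I_r})$, where $H_I := (h_{I_\ell}+h_{I_r})/\sqrt 2$ and $\tilde H_I := (h_{I_\ell}-h_{I_r})/\sqrt 2 = -\varphi_I$. In this basis $\psi_I = 3h_I + 4H_I$ and $\langle f,\varphi_I\rangle = -\langle f,\tilde H_I\rangle$, so
\[
 Tf = -3\sum_{I\text{ even}}\gamma(|I|)\langle f,\tilde H_I\rangle h_I \;-\; 4\sum_{I\text{ even}}\gamma(|I|)\langle f,\tilde H_I\rangle H_I,
\]
exhibiting $T$ as the sum of a same-scale rotation within each parent (second term: $\tilde H_I\to H_I$ at the same level $|I|/2$) and a one-step Haar shift (first term: moving the scale-$|I|/2$ coefficient onto the scale-$|I|$ Haar function). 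Neither piece is literally a martingale transform of $f$, but both are Haar shifts of complexity at most one acting on a UMD-valued function with an $A_p$ weight. The weighted bound with the same linear dependence $[w]_{A_p}^{\max\{1/(p-1),1\}}$ follows by the vector-valued adaptation of the dyadic Haar-shift estimates (cf.\ Lacey \cite{Lac}), which are themselves proved by reduction to a finite collection of martingale transforms of modified $\mathbb{B}$-valued martingales obtained from $f$ by predictable, uniformly bounded block operators. Summing the six estimates and absorbing $\|\gamma\|_\infty$ gives \eqref{weightedd}.

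Finally, the statement for odd $I$ follows by exactly the same scheme after interchanging the roles of the even and odd generations of $\mathbb{D}_{r,\beta}$.
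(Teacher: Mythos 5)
Your Haar expansion is correct ($\varphi_I=\tfrac{1}{\sqrt2}(h_{I_r}-h_{I_\ell})$, $\psi_I=3h_I+2\sqrt2(h_{I_\ell}+h_{I_r})$ do check out), and the pieces with $J(I)=K(I)$ are indeed Haar multipliers, i.e.\ genuine martingale transforms with a predictable multiplier bounded by $2\|\gamma\|_\infty$, to which \eqref{w_transform} applies. (A small bookkeeping slip: the product of the two expansions produces two diagonal and four off-diagonal terms, not three and three.) This part of your argument is close in spirit to Step 3 of the paper's unweighted Theorem \ref{UMD_theorem2}.

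The gap is in your treatment of the off-diagonal pieces, which you yourself identify as the main obstacle and then dispatch by invoking ``the vector-valued adaptation of the dyadic Haar-shift estimates'' with the sharp exponent $\max\{1/(p-1),1\}$. That invocation is essentially circular: a sharp weighted $L_p$ bound for UMD-valued Haar shifts of complexity one \emph{is} the content of Theorem \ref{UMD_theorem}, and it is not available off the shelf --- Lacey's argument in \cite{Lac} is for scalar-valued operators and relies on pointwise sparse domination obtained through scalar tools, while \cite{HPTV} covers only the real-valued case (as the paper notes explicitly, ``the context of UMD spaces requires more effort''). Your proposed mechanism --- reducing the shifts to ``martingale transforms of modified $\mathbb{B}$-valued martingales'' --- does prove the \emph{unweighted} $L_p$ bound (this is exactly how the paper proves Theorem \ref{UMD_theorem2}, via a three-step chain of transforms with respect to refined filtrations), but in the weighted setting each change of martingale costs its own factor: \eqref{w_transform} applied to a modified martingale controls the output by the weighted norm of the \emph{modified} martingale, which must then be compared back to $\|f\|_{L_p^w}$ at the price of another power of $[w]_{A_p}$. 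Carried out rigorously, this route yields $[w]_{A_p}^{3\max\{1/(p-1),1\}}$ --- precisely the suboptimal exponent flagged in the remark following Theorem \ref{UMD_theorem2}. To reach the linear exponent the paper has to build additional machinery that your proposal omits entirely: a weak-type $(1,1)$ bound for the shift via a good-$\lambda$ (Burkholder--Gundy) argument (Theorem \ref{UMD_theorem3}), a pointwise domination of the shift by a sparse operator, and a change-of-measure estimate for sparse operators giving $[w]_{A_p}^{\max\{1/(p-1),1\}}$. Without some substitute for these steps, the claimed dependence on the characteristic is unproved.
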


This result follows from Theorem 5.1 in \cite{HPTV} in the real-valued case, the context of $UMD$ spaces requires more effort. We guess that even in the vector setting the result is known, however, the proof presented below will use a number of novel arguments from martingale theory. We will exploit the so-called sparse operators, which have gained a lot of interest in the recent literature (we mention here the convenient references: Domelevo and Petermichl \cite{DP}, Lerner \cite{Le2}, Lorist \cite{Lo}, which contain argumentation related to that below). Let us briefly outline our approach.  The idea is to control pointwise the sum in \eqref{weightedd} by a similar expression, in which the summation is taken over much smaller collection of intervals, satisfying the so-called \emph{sparseness} condition (the formal definitions will appear later). The proof of such a domination rests on an unweighted, weak-type version of \eqref{weightedd}, which will be obtained with the use of classical martingales; having established the control, one shows the weighted estimate by a change-of-measure argument, similar to that used in Section 3.

We proceed to the formal analysis. The starting point is the following $L^p$ bound, the unweighted version of Theorem \ref{UMD_theorem}.

\begin{theorem}\label{UMD_theorem2}
Suppose that $\mathbb{B}$ is a UMD space and $f:\R\to \mathbb{B}$ is a Bochner integrable function. Then for $1<p<\infty$ and any bounded sequence $(\gamma(I))_{I\in \mathbb{D}_{r,\beta}}$ we have
$$ \left\|\sum_{I\in \mathbb{D}_{r,\beta},\atop  I\text{ even}} \gamma(I)\langle f,\varphi_I\rangle \psi_I\right\|_{L_p(\R;\mathbb{B})}\leq C_p\|\gamma\|_\infty \|f\|_{L_p(\R;\mathbb{B})}.$$
The same estimate holds if the sum on the left is taken over $I\in \mathbb{D}_{r,\beta}$ with odd $I$.
\end{theorem}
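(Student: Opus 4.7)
The plan is to view the operator
\[
T_{r,\beta}f:=\sum_{I\in\mathbb{D}_{r,\beta},\,I\text{ even}}\gamma(I)\langle f,\varphi_I\rangle\psi_I
\]
as a dyadic Haar shift of complexity at most one with respect to the natural filtration attached to $\mathbb{D}_{r,\beta}$, and then to reduce its $L_p(\R;\mathbb{B})$-boundedness to a finite linear combination of model operators controlled by the UMD property of $\mathbb{B}$.

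First I would introduce the filtration $(\mathcal{F}_n)_{n\in\Z}$ generated by the grid $\mathbb{D}_{r,\beta}$ at scale $r\cdot 2^n$ (becoming finer as $n\to-\infty$) and observe that for any even interval $I\in\mathbb{D}_{r,\beta}$ (i.e.\ $|I|=r\cdot 4^m$), the functions $\varphi_I$ and $\psi_I$ are supported in $I$, mean-zero on $I$, and constant on the four quarters of $I$. They therefore sit inside the three-dimensional subspace of $L_2(I)$ spanned by the $L_2$-normalized Haar functions $h_I$ (at scale $r\cdot 4^m$) and $h_{I_L}$, $h_{I_R}$ (at scale $r\cdot 4^m/2$), and a short explicit calculation gives universal constants $a_k,b_k$ ($k=0,1,2$) such that $\varphi_I=\sum_k a_k h_{J_k(I)}$ and $\psi_I=\sum_k b_k h_{J_k(I)}$ with $J_0(I)=I$, $J_1(I)=I_L$, $J_2(I)=I_R$. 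Substituting and interchanging summations reduces the theorem to establishing, for each of the (at most nine) model operators
\[
T_{kl}f:=\sum_{I\text{ even}}\gamma(I)\langle f,h_{J_k(I)}\rangle\,h_{J_l(I)},
\]
the estimate $\|T_{kl}f\|_{L_p(\R;\mathbb{B})}\leq C_{p,\mathbb{B}}\|\gamma\|_\infty\|f\|_{L_p(\R;\mathbb{B})}$.

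The three diagonal terms $k=l$ are easy: $T_{kk}$ simply multiplies the Haar coefficient of $f$ at each selected dyadic interval by the scalar $\gamma(I)\in[-\|\gamma\|_\infty,\|\gamma\|_\infty]$ and leaves every other coefficient untouched, hence is a scalar martingale transform of $f$ on $(\mathcal{F}_n)_{n\in\Z}$, and the defining UMD inequality for $\mathbb{B}$ yields the bound immediately. The main obstacle lies in the six off-diagonal cases $k\neq l$, where $h_{J_k(I)}$ and $h_{J_l(I)}$ live either at adjacent scales $|I|$ and $|I|/2$ or on sibling sub-intervals of $I$, so $T_{kl}$ is no longer a scalar martingale transform; my plan to overcome this is to exploit that all of $h_I,h_{I_L},h_{I_R}$ are $\mathcal{F}_{2m-2}$-measurable with vanishing $\mathcal{F}_{2m}$-conditional expectation, and to factor $T_{kl}$ as the composition of (i) the scalar martingale transform isolating the contributions of $f$ at Haar levels $2m$ and $2m-1$ and rescaling them by $\gamma(I)$, followed by (ii) a fixed, uniformly bounded rearrangement of the three Haar coefficients inside each even $I$. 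The rearrangement in (ii) is realized as a martingale transform on the finer filtration that additionally resolves the midpoints of $I_L$ and $I_R$, and each factor is $L_p(\R;\mathbb{B})$-bounded by the UMD property (Stein's inequality providing the bounds for the relevant projections), so composing the nine contributions completes the proof.
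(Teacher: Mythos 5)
Your reduction of the operator to the nine (in fact six, since $\varphi_I$ has no $h_I$-component) model operators $T_{kl}$ is legitimate, and the diagonal terms are indeed immediate from the UMD property. The gap is exactly where you locate the main obstacle: the off-diagonal operators $T_{kl}$, $k\neq l$, are \emph{not} realizable as martingale transforms, on the finer filtration resolving the midpoints of $I_L$ and $I_R$ or on any other filtration. A martingale transform multiplies each martingale difference pointwise by a predictable factor, hence acts diagonally on the corresponding difference decomposition; refining the filtration cannot convert the coefficient-moving map $\langle f,h_{I_L}\rangle h_{I_L}\mapsto \langle f,h_{I_L}\rangle h_{I_R}$ (or $\mapsto\langle f,h_{I_L}\rangle h_I$) into such a diagonal action, and the proposed factorization into ``transform followed by rearrangement'' does not exist, because the rearrangement step itself is the whole difficulty. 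The $L_p(\R;\mathbb{B})$-boundedness of these Haar rearrangements for UMD $\mathbb{B}$ is true, but it is a genuine theorem (Figiel's rearrangement/shift theorem, or the vector-valued theory of dyadic shifts of bounded complexity), whose proof needs randomized unconditionality, decoupling and an equidistribution argument for the shifted Haar system --- it does not follow from the UMD definition plus Stein's inequality in the way you sketch. As written, your argument for the hardest part is therefore circular or missing; you would either have to import Figiel's theorem as a black box or prove it.

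It is worth contrasting this with the paper's route, which sidesteps the off-diagonal problem entirely by never expanding $\psi_I$ in the standard Haar system. Instead it chooses non-standard two-step filtrations adapted to $\varphi$ and $\psi$: splitting $\psi=\psi^{\text{out}}+\psi^{\text{inn}}$ and comparing with the truncation $\zeta$, one has the exact proportionalities $\psi^{\text{out}}=7\zeta^{\text{out}}$, $\psi^{\text{inn}}=\zeta^{\text{inn}}$, and (for the half-based splitting) $\zeta^{\text{out}}=-\varphi^{\text{out}}$, $\zeta^{\text{inn}}=\varphi^{\text{inn}}$. Consequently each of the three comparison steps is an honest martingale transform by a predictable sequence with values in $\{1,7\}$, $\{-1,1\}$ or $\{0,\gamma(I)\}$, and only inequality \eqref{Lp} for UMD spaces is needed. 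If you want to keep your Haar-shift decomposition, you must replace point (ii) of your plan by an appeal to (or a proof of) the boundedness of complexity-one dyadic shifts on $L_p(\R;\mathbb{B})$; otherwise the simpler fix is to exploit the special algebraic compatibility of $\varphi$, $\zeta$ and $\psi$ as above.
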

\begin{proof}
We will apply three times the $L^p$ estimate  for martingale transforms, with respect to different filtrations.

\smallskip

 \emph{Step 1.} Consider the truncated version of $\psi$, given by
$$ \zeta(x)=\begin{cases}
1 &\mbox{if }0<x<1/4,\\
-1 & \mbox{if }1/4\leq x< 1/2,\\
1 & \mbox{if }1/2\leq x<3/4,\\
-1 & \mbox{if }3/4\leq x\leq 1.
\end{cases}$$
Then for any even integers $b<c$ we have
\begin{equation}\label{first_step}
\begin{split}
 &\left\|\sum_{I\in \mathbb{D}_{r,\beta},2^b\leq |I|/r\leq 2^c,\atop  I\text{ even}} \gamma(I)\langle f,\varphi_I\rangle \psi_I\right\|_{L_p(\mathcal{M})}
\leq c_p\left\|\sum_{I\in \mathbb{D}_{r,\beta},2^b\leq |I|/r\leq 2^c,\atop  I\text{ even}} \gamma(I)\langle f,\varphi_I\rangle \zeta_I\right\|_{L_p(\mathcal{M})}.
\end{split}
\end{equation}
To see this, split the function $\psi$ into two, the outer and the inner part:
$$ \psi^{\text{out}}(x)=\begin{cases}
7 & \mbox{if }0\leq x<1/4,\\
0 & \mbox{if }1/4\leq x< 3/4,\\
-7 & \mbox{if }3/4\leq x\leq 1
\end{cases}
\qquad \mbox{and}\qquad  \psi^{\text{inn}}(x)=\begin{cases}
0 &\mbox{if }0<x<1/4,\\
-1 & \mbox{if }1/4\leq x< 1/2,\\
1 & \mbox{if }1/2\leq x<3/4,\\
0 & \mbox{if }3/4\leq x\leq 1.
\end{cases}$$
Introduce the corresponding versions for $\zeta$: then $\zeta^{\text{out}}=\psi^{\text{out}}/7$ and $\zeta^{\text{inn}}=\psi^{\text{inn}}$. We have
\begin{equation}\label{mart1}
\sum_{I\in \mathbb{D}_{r,\beta},2^b\leq |I|/r\leq 2^c,\atop  I\text{ even}} \gamma(I)\langle f,\varphi_I\rangle \psi_I=
\sum_{I\in \mathbb{D}_{r,\beta},2^b\leq |I|/r\leq 2^c,\atop  I\text{ even}} \bigg[\gamma(I)\langle f,\varphi_I\rangle \psi_I^{\text{out}}+ \gamma(I)\langle f,\varphi_I\rangle \psi_I^{\text{inn}}\bigg]
\end{equation}
and
\begin{equation}\label{mart2}
\sum_{I\in \mathbb{D}_{r,\beta},2^b\leq |I|/r\leq 2^c,\atop  I\text{ even}} \gamma(I)\langle f,\varphi_I\rangle \zeta_I=
\sum_{I\in \mathbb{D}_{r,\beta},2^b\leq |I|/r\leq 2^c,\atop  I\text{ even}} \bigg[\gamma(I)\langle f,\varphi_I\rangle \zeta_I^{\text{out}}+ \gamma(I)\langle f,\varphi_I\rangle \zeta_I^{\text{inn}}\bigg].
\end{equation}
Since $\psi^{\text{inn}}$, $\psi^{\text{out}}$ have integral zero, the partial sums corresponding to the right-hand sides of \eqref{mart1} and \eqref{mart2} are martingales. Specifically, if $n$ is an even integer between $b$ and $c$, then the $n$-th differences are
$$\sum_{I\in \mathbb{D}_{r,\beta},|I|=r2^n}\gamma(I)\langle f,\varphi_I\rangle \psi_I^{\text{out}}\qquad\mbox{and}\qquad  \sum_{I\in \mathbb{D}_{r,\beta},|I|=r2^n}\gamma(I)\langle f,\varphi_I\rangle \zeta_I^{\text{out}},$$
while for odd $n$ (satisfying $b\leq n-1\leq c$), the differences are
$$\sum_{I\in \mathbb{D}_{r,\beta},|I|=r2^{n-1}}\gamma(|I|)\langle f,\varphi_I\rangle \psi_I^{\text{inn}}\qquad\mbox{and}\qquad  \sum_{I\in \mathbb{D}_{r,\beta},|I|=r2^{n-1}}\gamma(I)\langle f,\varphi_I\rangle \zeta_I^{\text{inn}}.$$
Furthermore, by the above discussion, the martingale associated with \eqref{mart2} is the transform of the martingale in \eqref{mart1} by a predictable sequence with values in $\{1,7\}$. This yields \eqref{first_step}.

\smallskip

\emph{Step 2.} Now we will prove that for any even integers $b<c$ we have
\begin{equation}\label{second_step}
\begin{split}
 &\left\|\sum_{I\in \mathbb{D}_{r,\beta},2^b\leq |I|/r\leq 2^c,\atop  I\text{ even}} \gamma(I)\langle f,\varphi_I\rangle \zeta_I\right\|_{L_p(\mathcal{M})}
\leq c_p\left\|\sum_{I\in \mathbb{D}_{r,\beta},2^b\leq |I|/r\leq 2^c,\atop  I\text{ even}} \gamma(I)\langle f,\varphi_I\rangle \varphi_I\right\|_{L_p(\mathcal{M})}.
\end{split}
\end{equation}
The argument is the same as previously, but we need a different filtration. Namely, we take $ \zeta^{\text{out}}=\zeta \chi_{[0,1/2)}$, $\zeta^{\text{inn}}=\zeta \chi_{[1/2,1)}$ and similarly for $\varphi^{\text{out}}$ and $\varphi^{\text{inn}}$. Then $\zeta^{\text{out}}=-\varphi^{\text{out}}$ and $\zeta^{\text{inn}}=\varphi^{\text{inn}}$, so the corresponding `finer' martingales associated with the left- and the right-hand side of \eqref{second_step} are transforms of each other by a predictable sequence of signs.

\smallskip

 \emph{Step 3.} The final part is to note that
\begin{equation}\label{third_step}
\begin{split}
 &\left\|\sum_{I\in \mathbb{D}_{r,\beta},2^b\leq |I|/r\leq 2^c,\atop  I\text{ even}} \gamma(I)\langle f,\varphi_I\rangle \varphi_I\right\|_{L_p(\mathcal{M})}\leq c_p\|\gamma\|_\infty \|f\|_{L_p(\R;\mathbb{B})}.
\end{split}
\end{equation}
Let $\zeta^{\text{inn}}$ and $\zeta^{\text{out}}$ be the functions introduced in Step 1 above. It is easy to see that the collection $\{\varphi_I,\zeta_I^{\text{inn}},\zeta_I^{\text{out}}\}_{I\in \mathbb{D}_{r,\beta},\,I\text{ even}}$ is a basis in $L_p(\R;\mathbb{B})$ for any fixed $r$ and $\beta$: this is just the Haar basis, under scaling and translation. Expanding $f\in L_p(\R;\mathbb{B})$ into this basis, we get
 $$ f=\sum_{I\in \mathbb{D}_{r,\beta},\,I \text{ even}} \Big(\langle f,\varphi_I\rangle \varphi_I+\langle f,\zeta_I^{\text{inn}}\rangle \zeta_I^{\text{inn}}+\langle f,\zeta_I^{\text{out}}\rangle \zeta^{\text{out}}_I\Big)$$
 and we see that the sum on the left of \eqref{third_step} is obtained by skipping some of the above terms and multiplying the other by the corresponding terms $\gamma(I)$. Thus \eqref{third_step} follows from the $L^p$ estimate for martingale transforms, where the transforming sequence takes values in the set $\{0,\gamma(I)\}_{I\in \mathbb{D}_{r,\beta}}$.

 Putting the above three steps together and letting $b\to-\infty$, $c\to \infty$, we get the desired assertion.
\end{proof}

\begin{remark}
One might repeat the above argumentation, replacing the $L_p$ space with its weighted version $L_p^w$. Then one gets the estimate \eqref{weightedd}, but with a worse dependence on the characteristic: $[w]_{A_p}^{3\max\{1/(p-1),1\}}$.
\end{remark}

Now let us fix some additional notation. From now on, we will work with a single dyadic lattice $\mathbb{D}_{1,0}$. Given $\Omega\in \mathbb{D}_{1,0}$ with $|\Omega|=4^N$ for some integer $N$, we introduce its filtration $(\F_n^\Omega)_{n\geq 0}$ defined by $\F_0^\Omega=\{\emptyset,\Omega\}$ and, for any $n\geq 0$,
\begin{align*}
\F_{2n+1}^\Omega&=\sigma\Big(\big\{\varphi_I\,:\,I\mbox{ is a dyadic subinterval of }\Omega,\,|I|=4^{-n}|\Omega|\big\}\Big),\\
\F_{2n+2}^\Omega&=\sigma\Big(\big\{\psi_I\,:\,I\mbox{ is a dyadic subinterval of }\Omega,\,|I|=4^{-n}|\Omega|\big\}\Big).
\end{align*}
Next, suppose that $f\in L_1(\R;\mathbb{B})$ is a given function, let $\gamma=\{\gamma(I)\}_{I\in \mathbb{D}_{1,0}}$ be an arbitrary sequence bounded by $1$ and define $g^{\Omega}=\sum_{I\in \mathbb{D}_{1,0},\,I\subseteq \Omega,\,I\text{ even}}\gamma(I)\langle f,\varphi_I\rangle \psi_I$.  Let $(f_n^\Omega)_{n\geq 0}$, $(g_n^\Omega)_{n\geq 0}$ be the martingales generated by $f|_\Omega$ and $g^\Omega|_\Omega$, relative to the filtration $\F^\Omega$. It is easy to check that the associated differences are $df_0^\Omega=\frac{1}{|\Omega|}\int_\Omega f$, $dg_0^\Omega=0$ and for $n\geq 0$,
$$\begin{array}{lll}
 \displaystyle &\displaystyle df_{2n+1}^\Omega=\sum_{|I|/|\Omega|=4^{-n}} \langle f,\varphi_I\rangle \varphi_I,\qquad  &\displaystyle dg_{2n+1}^\Omega=0\\
&\displaystyle df_{2n+2}^\Omega=\sum_{|I|/|\Omega|=4^{-n}}\Big(\langle f,\zeta_I^{\text{inn}}\rangle \zeta_I^{\text{inn}}+\langle f,\zeta_I^{\text{out}}\rangle \zeta^{\text{out}}_I\Big), \qquad &\displaystyle dg_{2n+2}^\Omega=\sum_{|I|/|\Omega|=4^{-n}} \gamma(I)\langle f,\varphi_I\rangle \psi_I,
 \end{array}$$
 where $\zeta^{\text{inn}}$, $\zeta^{\text{out}}$ have been defined in Step 1 of the proof of the previous theorem.
Observe that  $\|dg_{2n+2}^\Omega\|_\mathbb{B}\leq 7\|df_{2n+1}^\Omega\|_\mathbb{B}$ for all $n$. Furthermore, note that the real-valued variables $(\|df_n^\Omega\|_\mathbb{B})_{n\geq 0}$ are predictable: for any $n\geq 1$, $\|df_n^\Omega\|_\mathbb{B}$ is $\F_{n-1}^\Omega$-measurable.

\begin{theorem}\label{UMD_theorem3}
Under the above notation, there is a universal constant $C$ for which
\begin{equation}\label{auxil_weak}
 \left\|\sup_{n\geq 0}\Big\|g_n^\Omega\Big\|_\mathbb{B}\right\|_{L_{1,\infty}(\Omega;\R)} \leq C\left\|f\right\|_{L_{1}(\Omega;\mathbb{B})}.
\end{equation}
\end{theorem}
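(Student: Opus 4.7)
The plan is to perform a Calderón--Zygmund decomposition of $f$ at level $\lambda$ with respect to the base-$4$ sub-filtration $(\F^\Omega_{2n})_{n\geq 0}$, whose atoms are the standard dyadic subintervals $J\subseteq \Omega$ with $|J|=4^{-n}|\Omega|$. Stopping the first time the average of $\|f(\cdot)\|_\mathbb{B}$ on such an atom exceeds $\lambda$ produces pairwise disjoint ``bad'' intervals $\{J_k\}$ with $\lambda < \langle \|f\|_\mathbb{B}\rangle_{J_k} \leq 4\lambda$, $\sum_k |J_k| \leq \|f\|_{L_1(\Omega;\mathbb{B})}/\lambda$, and $\|f(x)\|_\mathbb{B}\leq \lambda$ a.e.\ off $\bigcup_k J_k$. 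Split $f=h+b$ as usual, with $h=f\chi_{\Omega\setminus\bigcup_k J_k}+\sum_k \langle f\rangle_{J_k}\chi_{J_k}$ and $b=\sum_k b_k$, $b_k=(f-\langle f\rangle_{J_k})\chi_{J_k}$; then $\|h\|_{L_\infty(\Omega;\mathbb{B})}\leq 4\lambda$ and $\|h\|_{L_1(\Omega;\mathbb{B})}\lesssim \|f\|_{L_1(\Omega;\mathbb{B})}$.

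For the good part I would invoke Theorem \ref{UMD_theorem2} at $p=2$ to get
\begin{equation*}
\|g^\Omega(h)\|_{L_2(\Omega;\mathbb{B})}\leq C\|h\|_{L_2(\Omega;\mathbb{B})}\leq C\sqrt{\|h\|_{L_\infty(\Omega;\mathbb{B})}\,\|h\|_{L_1(\Omega;\mathbb{B})}}\lesssim \sqrt{\lambda\|f\|_{L_1(\Omega;\mathbb{B})}}.
\end{equation*}
Since $\|g^\Omega_n(h)\|_\mathbb{B}$ is a nonnegative scalar submartingale (via conditional Jensen's inequality applied to the $\mathbb{B}$-valued martingale $(g^\Omega_n(h))$), classical Doob's maximal inequality in $L_2$ yields $\|\sup_n\|g^\Omega_n(h)\|_\mathbb{B}\|_{L_2(\Omega)}\leq 2\|g^\Omega(h)\|_{L_2(\Omega;\mathbb{B})}$, and Chebyshev's inequality then controls the good-part distribution function: $|\{\sup_n\|g^\Omega_n(h)\|_\mathbb{B}>\lambda\}|\lesssim \|f\|_{L_1(\Omega;\mathbb{B})}/\lambda$.

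The bad part is handled purely by cancellation. Each $b_k$ is supported on $J_k$ with zero mean, and for every ``even'' ancestor $I\supsetneq J_k$ one has $|I|/|J_k|\in\{4,16,\ldots\}$; a direct dyadic check shows that a standard dyadic interval of length $\leq |I|/4$ never straddles two quarters of $I$, so $J_k$ lies in a single quarter of $I$, $\varphi_I$ is constant on $J_k$, and $\langle b_k,\varphi_I\rangle=0$. For $I\subseteq J_k$ one has $\langle b_k,\varphi_I\rangle=\langle f,\varphi_I\rangle$, so
\begin{equation*}
g^\Omega(b_k)=\sum_{I\subseteq J_k,\,I\text{ even}}\gamma(I)\langle f,\varphi_I\rangle\psi_I
\end{equation*}
is supported in $J_k$ and integrates to zero on $J_k$ (because each $\psi_I$ does). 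It follows that every partial sum $g^\Omega_n(b_k)=\mathcal{E}(g^\Omega(b_k)|\F^\Omega_n)$ is supported in $J_k$ as well, so $\{\sup_n\|g^\Omega_n(b)\|_\mathbb{B}>0\}\subseteq \bigcup_k J_k$, whose measure is at most $\|f\|_{L_1(\Omega;\mathbb{B})}/\lambda$. Combining the two bounds (after the usual rescaling of $\lambda$) yields \eqref{auxil_weak}. The main technical obstacle is coordinating the parity constraint in the definition of $g^\Omega$ with the scale on which Haar cancellation operates: because $\varphi_I$ is constant on \emph{quarters} (not halves) of $I$, the CZ decomposition must be performed on the coarser base-$4$ sub-filtration so that every ``even'' ancestor $I\supsetneq J_k$ satisfies $|I|\geq 4|J_k|$ and the vanishing $\langle b_k,\varphi_I\rangle=0$ really holds.
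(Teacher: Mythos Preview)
Your argument is correct and takes a genuinely different route from the paper. The paper proves \eqref{auxil_weak} by a Burkholder--Gundy good-$\lambda$ extrapolation: it introduces stopping times for $\|g_n^\Omega\|_\mathbb{B}$ crossing levels $1$ and $\beta$, and for $\|f_n^\Omega\|_\mathbb{B}\vee\|df_{n+1}^\Omega\|_\mathbb{B}$ crossing $\delta$, uses the predictability of $(\|df_n^\Omega\|_\mathbb{B})_{n\geq 0}$ together with the pointwise domination $\|dg_{2n+2}^\Omega\|_\mathbb{B}\leq 7\|df_{2n+1}^\Omega\|_\mathbb{B}$ to control the jump at the stopping time, and then feeds the $L_2$ bound of Theorem~\ref{UMD_theorem2} into Chebyshev to obtain a good-$\lambda$ inequality which is iterated against Doob's weak $(1,1)$ bound for $\sup_n\|f_n^\Omega\|_\mathbb{B}$.

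Your Calder\'on--Zygmund approach instead exploits the concrete Haar structure: because $\varphi_I$ is constant on the \emph{quarters} of $I$, stopping on the base-$4$ sub-filtration forces every even ancestor $I\supsetneq J_k$ to satisfy $|I|\geq 4|J_k|$, so $\langle b_k,\varphi_I\rangle=0$ and the bad part produces a maximal function that is literally supported on $\bigcup_k J_k$. This is cleaner than a generic CZ argument (no off-support estimate for the bad part is needed) and arguably more elementary than the good-$\lambda$ machinery. On the other hand, the paper's method is more robust: it uses only the abstract martingale relations between $(f_n^\Omega)$ and $(g_n^\Omega)$ (predictability of the jumps and the difference domination) and would transfer verbatim to other shift-type operators where the analogue of ``$\varphi_I$ constant on quarters'' may fail. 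Both proofs ultimately rest on the same $L_2$ input, Theorem~\ref{UMD_theorem2}.
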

\begin{proof}
 We will use the previous theorem combined with the extrapolation (good-lambda) method of Burkholder and Gundy.
Fix $\beta>1$, $\delta \in (0,1)$ (the values will be specified later) and introduce the stopping times $\mu,\,\nu,\,\sigma$ by
\begin{align*}
 \mu&=\inf\{n\geq 0: \|g_n^\Omega\|_\mathbb{B}\geq 1 \},\\
\nu& =\inf\{n\geq 0:\|g_n^\Omega\|_\mathbb{B}\geq \beta\}\\
\sigma&=\inf\left\{n\geq 0: \|f_n^\Omega\|_\mathbb{B} \vee \|df_{n+1}^\Omega\|_\mathbb{B}\geq \delta \right\},
\end{align*}
with the standard convention $\inf\emptyset=\infty$ and $a\vee b=\max\{a,b\}$. To see that $\sigma$ is also a stopping time, one needs to refer to the predictability of $(\|df_n^\Omega\|_\mathbb{B})_{n\geq 0}$ discussed above.  Denoting by $a\wedge b$ the minimum of $a$ and $b$, we may write
\begin{equation}\label{Chebyshev}
\begin{split}
 \mathbb{P}\left(\sup_{n\geq 0}\|g_n^\Omega\|_\mathbb{B}\geq \beta,\,\sup_{n\geq 0}(\|f_n^\Omega\|_\mathbb{B}\vee\|df_{n+1}^\Omega\|_\mathbb{B})<\delta \right)&=\mathbb{P}(\mu\leq \nu <\infty,\,\sigma=\infty)\\
 &\leq \mathbb{P}(\|g_{\nu\wedge \sigma}^\Omega-g_{\mu\wedge \sigma}^\Omega\|_\mathbb{B}\geq \beta-1-7\delta).
\end{split}
\end{equation}
Here the latter passage is due to the triangle inequality: on the set $\{\mu\leq \nu <\infty,\,\sigma=\infty\}$ we have $\|g_{\nu\wedge \sigma}^\Omega\|_\mathbb{B}\geq \beta$  and $ \|g_{\mu\wedge \sigma}^\Omega\|_\mathbb{B}\leq \|g_{\mu\wedge \sigma-1}^\Omega\|_\mathbb{B}+7\|df_{\mu\wedge \sigma-1}^\Omega\|_\mathbb{B}\leq 1+7\delta$.
 Now, by Chebyshev's inequality, the last expression in \eqref{Chebyshev} does not exceed $\|g_{\nu\wedge \sigma}^\Omega-g_{\mu\wedge \sigma}^\Omega\|_{L_2(\Omega;\mathbb{B})}^2/(\beta-1-7\delta)^2$. The previous theorem implies that
$$\|g_{\nu\wedge \sigma}^\Omega-g_{\mu\wedge \sigma}^\Omega\|_{L_2(\Omega;\mathbb{B})}\leq C_2\|f_{\nu\wedge\sigma}^\Omega-f_{\mu\wedge \sigma}^\Omega\|_{L_2(\Omega;\mathbb{B})}.$$
(Indeed, set $f:=f_{\nu\wedge\sigma}^\Omega-f_{\mu\wedge \sigma}^\Omega$ and use the same transforming sequence $(\gamma(I))_{I\in \mathbb{D}_{1,0}}$).
Hence we obtain
\begin{align*}
 \mathbb{P}\left(\sup_{n\geq 0}\|g_n^\Omega\|_\mathbb{B}\geq \beta,\,\sup_{n\geq 0}(\|f_n^\Omega\|_\mathbb{B}\vee\|df_{n+1}^\Omega\|_\mathbb{B})<\delta \right)&\leq \frac{C_2^2\E \|f_{\nu\wedge\sigma}^\Omega-f_{\mu\wedge \sigma}^\Omega\|_\mathbb{B}^2}{(\beta-1-7\delta)^2}\\
&=\frac{C_2^2\E \|f_{\nu\wedge\sigma}^\Omega-f_{\mu\wedge \sigma}^\Omega\|_\mathbb{B}^2\chi_{\{\mu<\infty\}}}{(\beta-1-7\delta)^2},
\end{align*}
where the latter passage is due to the identity $f_{\nu\wedge\sigma}^\Omega=f_{\mu\wedge \sigma}^\Omega$ on the set $\mu=\infty$. But by the definition of $\sigma$, we have $ \|f_{\nu\wedge\sigma}^\Omega-f_{\mu\wedge \sigma}^\Omega\|_\mathbb{B}\leq \|f_{\nu\wedge\sigma}^\Omega\|_\mathbb{B}+\|f_{\mu\wedge \sigma}^\Omega\|_\mathbb{B}\leq 4\delta.$ Now, since  $\mathbb{P}(\mu<\infty)=\mathbb{P}\left(\sup_{n\geq 0}\|g_n^\Omega\|_\mathbb{B}\geq 1\right)$, putting all the above observations together gives
$$ \mathbb{P}\left(\sup_{n\geq 0}\|g_n^\Omega\|_\mathbb{B}\geq \beta,\,\sup_{n\geq 0}(\|f_n^\Omega\|_\mathbb{B}\vee\|df_{n+1}^\Omega\|_\mathbb{B})<\delta \right)\leq \frac{16C_2^2\delta^2}{(\beta-1-7\delta)^2}\mathbb{P}\left(\sup_{n\geq 0}\|g_n^\Omega\|_\mathbb{B}\geq 1\right).$$
Now we specify $\beta=3$ and $\delta=(32C_2)^{-1}$, and apply homogeneity argument to obtain that
$$ \mathbb{P}\left(\sup_{n\geq 0}\|g_n^\Omega\|_\mathbb{B}\geq 3\lambda ,\,\sup_{n\geq 0}(\|f_n^\Omega\|_\mathbb{B}\vee\|df_{n+1}^\Omega\|_\mathbb{B})<\delta \lambda\right)\leq \frac{1}{12}\mathbb{P}\left(\sup_{n\geq 0}\|g_n^\Omega\|_\mathbb{B}\geq \lambda\right)$$
 for $\lambda>0$ (here we used the fact that $\delta<1/4$, so $\beta-1-7\delta\geq \sqrt{3}$). This implies
 $$ \mathbb{P}\left(\sup_{n\geq 0}\|g_n^\Omega\|_\mathbb{B}\geq 3\lambda\right)\leq \mathbb{P}\left(\sup_{n\geq 0}(\|f_n^\Omega\|_\mathbb{B}\vee\|df_{n+1}^\Omega\|_\mathbb{B})\geq \delta \lambda\right)+\frac{1}{12}\mathbb{P}\left(\sup_{n\geq 0}\|g_n^\Omega\|_\mathbb{B}\geq \lambda\right)$$
and hence, multiplying both sides by $\lambda$, we obtain
\begin{align*}
 &\frac{1}{3}\left\|\sup_{n\geq 0}\|g_n^\Omega\|_\mathbb{B}\right\|_{L_{1,\infty}(\Omega;\R)}\leq 32C_2\left\|\sup_{n\geq 0}(\|f_n^\Omega\|_\mathbb{B}\vee \|df_{n+1}^\Omega\|_\mathbb{B})\right\|_{L_{1,\infty}(\Omega;\R)}+\frac{1}{12}\left\|\sup_{n\geq 0}\|g_n^\Omega\|_\mathbb{B}\right\|_{L_{1,\infty}(\Omega;\R)}.
\end{align*}
It remains to observe that by the triangle inequality and the weak-type $(1,1)$ bound for the (sub-)martingale maximal function,
\begin{align*}
\left\|\sup_{n\geq 0}(\|f_n^\Omega\|_\mathbb{B}\vee \|df_{n+1}^\Omega\|_\mathbb{B})\right\|_{L_{1,\infty}(\Omega;\R)}&\leq
\left\|\sup_{n\geq 0}\|f_n\|_\mathbb{B}\right\|_{L_{1,\infty}(\Omega;\R)}+\left\|\sup_{n\geq 0}\|df_{n+1}^\Omega\|_\mathbb{B}\right\|_{L_{1,\infty}(\Omega;\R)}\\
&\leq 5\left\|\sup_{n\geq 0}\|f_n\|_\mathbb{B}\right\|_{L_{1,\infty}(\Omega;\R)}\leq 5\|f\|_{L_1(\Omega;\mathbb{B})}.
\end{align*}
The proof is complete.
\end{proof}

We turn our attention to the sparse domination. Let $\mathscr{D}$ denote the class of all dyadic subintervals of $[0,1)$ having measure $4^{-n}$ for some $n$.

\begin{definition}
A collection $\mathscr{S}\subset \mathscr{D}$ is called sparse, if there is a family $\{E(\Omega)\}_{\Omega\in \mathscr{S}}$ of pairwise disjoint sets such that $E(\Omega)\subseteq \Omega$ and $|E(\Omega)|\geq |\Omega|/2$ for all $\Omega\in \mathscr{S}$.
\end{definition}

\begin{proposition}
Let $f:\R\to \mathbb{B}$ be a Bochner integrable function and let $\gamma=\{\gamma(I)\}_{I\in \mathscr{D}}$ be a sequence with values in $[-1,1]$. Then there exists a sparse family $\mathscr{S}\subset\mathscr{D}$ for which we have
\begin{equation}\label{sparse_domination}
 \left\|\sum_{I\in \mathscr{D}} \gamma(I)\langle f,\varphi_I\rangle \psi_I\right\|_\mathbb{B}\leq (2C+7)\sum_{\Omega\in \mathscr{S}} \left(\frac{1}{|\Omega|}\int_\Omega \|f\|_\mathbb{B}\right)\chi_\Omega
\end{equation}
almost everywhere on $[0,1)$. Here $C$ is the weak-type constant in \eqref{auxil_weak}.
\end{proposition}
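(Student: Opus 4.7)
The plan is to build $\mathscr{S}$ via a recursive Calder\'on--Zygmund stopping-time procedure driven by the weak-type bound \eqref{auxil_weak}, and then to verify the pointwise domination through a telescoping decomposition organized by the $\mathscr{S}$-generations.

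First I would construct $\mathscr{S}$ inductively. Start with $\mathscr{S} := \{[0,1)\}$; for $\Omega \in \mathscr{S}$ set $m_\Omega := |\Omega|^{-1}\int_\Omega \|f\|_\mathbb{B}$ and define
\[
B(\Omega) := \left\{x \in \Omega : \sup_n \|g_n^\Omega(x)\|_\mathbb{B} > 2C\, m_\Omega\right\}.
\]
Theorem \ref{UMD_theorem3} applied to $f|_\Omega$ yields $|B(\Omega)| \leq |\Omega|/2$. The stopping children $\mathrm{ch}(\Omega)$ are the maximal elements of $\mathscr{D}$ lying inside $B(\Omega)$; I adjoin them to $\mathscr{S}$ and iterate. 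The companion sets $E(\Omega) := \Omega \setminus \bigsqcup_{J \in \mathrm{ch}(\Omega)} J$ then have measure $\geq |\Omega|/2$ and are pairwise disjoint (any $x \in E(\Omega)$ avoids every $\mathscr{S}$-descendant of $\Omega$), so $\mathscr{S}$ is sparse.

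Next I would establish the pointwise bound. Fix $x \in [0,1)$ and let $\Omega_0 \supsetneq \Omega_1 \supsetneq \cdots$ be the chain of $\mathscr{S}$-cubes through $x$, terminating at some $\Omega_\infty$ with $x \in E(\Omega_\infty)$. Decompose
\[
g(x) = g^{\Omega_\infty}(x) + \sum_k A_k(x), \quad A_k(x) := \sum_{\substack{I \in \mathscr{D}\\ \Omega_{k+1} \subsetneq I \subseteq \Omega_k}} \gamma(I) \langle f, \varphi_I\rangle \psi_I(x).
\]
The terminal piece satisfies $\|g^{\Omega_\infty}(x)\|_\mathbb{B} \leq 2C\, m_{\Omega_\infty}$ by the stopping condition. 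For an intermediate $A_k$, set $\Omega := \Omega_k$, $J := \Omega_{k+1}$, and let $\tilde J$ denote the dyadic parent of $J$ in $\mathscr{D}$; since $\psi_I$ and $\varphi_I$ are constant on $\tilde J$ whenever $I \supsetneq \tilde J$, the layer splits as $A_k(x) = \alpha + \gamma(\tilde J)\langle f, \varphi_{\tilde J}\rangle \psi_{\tilde J}(x)$ with $\alpha$ independent of $x \in \tilde J$ and equal to a specific martingale value $g^\Omega_*(y)$. By maximality of $J$ inside $B(\Omega)$, $\tilde J$ meets $E(\Omega)$, so picking $y \in \tilde J \cap E(\Omega)$ yields $\|\alpha\|_\mathbb{B} \leq 2C\, m_\Omega$, while the residual Haar term obeys $\|\gamma(\tilde J)\langle f,\varphi_{\tilde J}\rangle \psi_{\tilde J}(x)\|_\mathbb{B} \leq 7\, m_{\tilde J}$ via $\|\psi_{\tilde J}\|_\infty \leq 7/\sqrt{|\tilde J|}$ and $\|\langle f, \varphi_{\tilde J}\rangle\|_\mathbb{B} \leq \sqrt{|\tilde J|}\, m_{\tilde J}$.

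The hard part will be handling the factor $m_{\tilde J}$ rather than $m_\Omega$ in this last bound. The cleanest fix, which I would pursue, is to enlarge $\mathscr{S}$ by also adjoining the dyadic parent $\tilde J$ of each stopping child and to attribute the term $7\, m_{\tilde J}\chi_{\tilde J}$ to this newly added cube; one then checks that the enlarged family remains sparse via a routine combinatorial argument on the nested dyadic structure, along the lines of the references \cite{Le2,Lo,DP}. Summing the resulting uniform layer bound $(2C+7)\, m_\Omega$ over the $\mathscr{S}$-ancestors of $x$ then closes the proof.
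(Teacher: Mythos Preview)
Your construction and telescoping decomposition mirror the paper's proof almost exactly: the same stopping rule based on $\sup_n\|g_n^\Omega\|_\mathbb{B}>2Cm_\Omega$, the same chain $\Omega_0\supsetneq\Omega_1\supsetneq\cdots$ through $x$, and the same splitting of each intermediate layer into the truncated martingale value plus the single Haar term on the $\mathscr{D}$--parent $\tilde J=(\Omega_{k+1})'$. The paper, however, simply records the residual Haar contribution as $7\bigl(\tfrac{1}{|\Omega_{k+1}|}\int_{\Omega_{k+1}}\|f\|_\mathbb{B}\bigr)\chi_{\Omega_{k+1}}$, i.e.\ the average over the \emph{child} $\Omega_{k+1}\in\mathscr{S}$, so that summing over $k$ gives exactly $(2C+7)\sum_{\Omega\in\mathscr{S}}m_\Omega\chi_\Omega$. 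You are right that the obvious bound $\|\langle f,\varphi_{\tilde J}\rangle\|_\mathbb{B}\,|\tilde J|^{-1/2}\le m_{\tilde J}$ only delivers the average over $\tilde J\supsetneq\Omega_{k+1}$, and $m_{\tilde J}\le m_{\Omega_{k+1}}$ is false in general; the paper's passage at this point is a slip that you have correctly flagged.

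Your proposed repair, adjoining every such parent $\tilde J$ to $\mathscr{S}$, does work in principle, but it is not quite as routine as you suggest and it does not recover the stated constant $2C+7$: one must invoke the Carleson--sparse equivalence to see that the enlarged family is still sparse (with a worse sparseness parameter), and the resulting pointwise bound carries a larger numerical factor. A cleaner and more standard fix, which keeps the sparse family unchanged, is to augment the stopping set to
\[
B(\Omega)=\Bigl\{x\in\Omega:\ \sup_n\|g_n^\Omega(x)\|_\mathbb{B}>2Cm_\Omega\Bigr\}\ \cup\ \Bigl\{x\in\Omega:\ M\bigl(\|f\|_\mathbb{B}\chi_\Omega\bigr)(x)>4m_\Omega\Bigr\},
\]
combining \eqref{auxil_weak} with the weak $(1,1)$ bound for the dyadic maximal function so that $|B(\Omega)|\le\tfrac34|\Omega|$. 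Maximality of $J=\Omega_{k+1}$ then forces $m_{\tilde J}\le 4m_{\Omega_k}$, and the Haar term is absorbed into the $\Omega_k$--summand directly, at the cost of a harmless change in the numerical constant.
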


\begin{proof}The collection $\mathscr{S}$ will be obtained by the following algorithm.

\smallskip

\emph{Step 1.} We put $[0,1)$ into $\mathscr{S}$ and mark it as `unused'.

\smallskip

\emph{Step 2.} We pick an unused element $\Omega \in \mathscr{S}$ of maximal measure and define  $\lambda_\Omega=\frac{2C}{|\Omega|}\int_\Omega \|f\|_\mathbb{B}$. Consider the martingale $(g^\Omega_n)_{n\geq 0}$ and split the set $ \{\omega\in \Omega:\sup_{n\geq 0}\|g^\Omega_n\|_\mathbb{B}\geq \lambda_\Omega\}$ into the union of pairwise disjoint and maximal elements $\Omega_1$, $\Omega_2$, $\ldots$ of $\mathscr{D}$. There might be finite or infinite number of such terms, we put them all into $\mathscr{S}$.

\smallskip

\emph{Step 3.} We define $ E(\Omega)=\{\omega\in \Omega:\sup_{n\geq 0}\|g^\Omega_n\|_\mathbb{B}<\lambda_\Omega\}$, mark $\Omega$ as `used' and go to Step 2.

\smallskip

Let us study the properties of the above objects. The class $\mathscr{S}$ we obtain is indeed contained in $\mathscr{D}$. By the construction, the sets $\{E(\Omega)\}_{\Omega\in \mathscr{S}}$ are pairwise disjoint, furthermore, the weak-type inequality \eqref{auxil_weak} implies $|E(\Omega)|\geq |\Omega|/2$ for any $\Omega\in \mathscr{S}$. This in particular gives $\sum_{\Omega \in \mathscr{S}}|\Omega|\leq 2$ and hence almost all $\omega\in [0,1)$ belong to a finite number of elements of $\mathscr{S}$. Let $j(w)$ be the unique positive integer such that $\omega\in E(\Omega_{j(\omega)}^\omega)\subset \Omega_{j(\omega)}^\omega\subset \Omega_{j(\omega)-1}^\omega\subset \ldots \subset \Omega_1^\omega=[0,1)$, with $\Omega_j^\omega\in \mathscr{S}$.

We are ready to verify \eqref{sparse_domination}. Outside $[0,1)$ both sides vanish, and for $\omega\in [0,1)$ we write
\begin{align*}
&\left\|\sum_{I\in \mathscr{D}} \gamma(I)\langle f,\varphi_I\rangle \psi_I(\omega)\right\|_\mathbb{B}\\
&\leq
\sum_{k=1}^{j(\omega)}\left\|\sum_{I\in \mathscr{D},\Omega_{k-1}^\omega\supseteq I\supsetneq \Omega_k^\omega} \gamma(I)\langle f,\varphi_I\rangle \psi_I(\omega)\right\|_\mathbb{B}+\left\|\sum_{I\in \mathscr{D},\Omega_{j(\omega)}^\omega\supseteq I} \gamma(I)\langle f,\varphi_I\rangle \psi_I(\omega)\right\|_\mathbb{B}.
\end{align*}
However, for any $\Omega$, the partial sums of $\sum_{I\in \mathscr{D},\Omega\supseteq I} \gamma(I)\langle f,\varphi_I\rangle \psi_I$ form the martingale $g^\Omega$. Thus, by the very definition of the splitting procedure in Step 2, we have
$$  \left\|\sum_{I\in \mathscr{D},\Omega_{j(\omega)}^\omega\supseteq I} \gamma(I)\langle f,\varphi_I\rangle \psi_I(\omega)\right\|_\mathbb{B}\leq \frac{2C}{|\Omega_{j(\omega)}^\omega|}\left(\int_{\Omega_{j(\omega)}^\omega} \|f\|_\mathbb{B}\right) \chi_{\Omega_{j(\omega)}^\omega}(\omega).$$
For the expression
$$ \left\|\sum_{I\in \mathscr{D},\Omega_{k-1}^\omega\supseteq I\supsetneq \Omega_k^\omega} \gamma(I)\langle f,\varphi_I\rangle \psi_I(\omega)\right\|_\mathbb{B}$$
we proceed similarly, however, we need a small modification, as the above construction shows that this is \emph{larger} than $\frac{2C}{|\Omega_{k-1}^\omega|}\int_{\Omega_{k-1}^\omega}\|f\|_\mathbb{B}$. Denoting the parent of $\Omega_k^\omega$ in $\mathscr{D}$ by $(\Omega_k^\omega)'$, we obtain
\begin{align*}
& \left\|\sum_{I\in \mathscr{D},\Omega_{k-1}^\omega\supseteq I\supsetneq \Omega_k^\omega} \gamma(I)\langle f,\varphi_I\rangle \psi_I(\omega)\right\|_\mathbb{B}\\
&\leq  \left\|\sum_{I\in \mathscr{D},\Omega_{k-1}^\omega\supseteq I\supsetneq (\Omega_k^\omega)'} \gamma(I)\langle f,\varphi_I\rangle \psi_I(\omega)\right\|_\mathbb{B}
+7\|\gamma\|_\infty \|\langle f,\varphi_{(\Omega_k^\omega)'}\rangle\|_\mathbb{B}|(\Omega_k^\omega)'|^{-1/2}\\
&\leq 2C\left(\frac{1}{|\Omega_{k-1}^\omega|}\int_{\Omega_{k-1}^\omega}\|f\|_\mathbb{B}\right)\chi_{\Omega_{k-1}^\omega}(\omega)+7\left(\frac{1}{|\Omega_k^\omega|}\int_{\Omega_k^\omega}\|f\|_\mathbb{B}\right)\chi_{\Omega_k^\omega}(\omega).
\end{align*}
This gives the claim.
\end{proof}

Finally, we are ready for the proof of the weighted estimate \eqref{weightedd}. The change-of-measure argument used below is inspired by \cite{Mo}.

\begin{proof}[Proof of Theorem \ref{UMD_theorem}] We start with reductions. It suffices to show the claim for $p\geq 2$, then the case $1<p<2$ follows by duality. Next, by the approximation, scaling and translating, it is enough to show that
$$  \left\|\sum_{I\in \mathscr{D}} \gamma(|I|)\langle f,\varphi_I\rangle \psi_I\right\|_{L_p^w(\R;\mathbb{B})}\leq C_p\|\gamma\|_\infty [w]_{A_p}^{\max\{1/(p-1),1\}}\|f\|_{L_p^w(\R;\mathbb{B})}.
$$
By homogeneity, we may and do assume that $\|\gamma\|_\infty\leq 1$.  Therefore, using the \eqref{sparse_domination}, we will be done if we prove the estimate
$$ \left\|\sum_{\Omega\in \mathscr{S}} \left(\frac{1}{|\Omega|}\int_\Omega \|f\|_\mathbb{B}\right)\chi_\Omega\right\|_{L_p^w(\R;\mathbb{B})}\leq C_p\|\gamma\|_\infty [w]_{A_p}^{\max\{1/(p-1),1\}}\|f\|_{L_p^w(\R;\mathbb{B})}.$$
To this end, we let $v=w^{1/(1-p)}$ be the dual weight to $w$ and pick an arbitrary nonnegative $h\in L_{p'}^v(\R;\R)$. For any $I\in \mathscr{D}$ and any weight $u$, the symbol $\mathcal{E}_{I}^uf=\frac{1}{u(I)}\int_I fu\mbox{d}\omega$ will stand for the average of $f$ over $I$ with respect to the measure $ud\omega$. Then
\begin{align*}
&\int_0^1 \left(\sum_{\Omega\in \mathscr{S}} \left(\frac{1}{|\Omega|}\int_\Omega \|f\|_\mathbb{B}\right)\chi_\Omega\right)h \mbox{d}\omega\\
&=\sum_{\Omega \in \mathscr{S}} \frac{w(\Omega)v(\Omega)^{p-1}}{|\Omega|^p} \cdot |\Omega|^{p-1}v(\Omega)^{2-p}\mathcal{E}_\Omega^v (\|f\|_\mathbb{B}v^{-1})\mathcal{E}_\Omega^w(hw^{-1})\\
&\leq [w]_{A_p} \sum_{\Omega \in \mathscr{S}} |\Omega|^{p-1}v(\Omega)^{2-p}\mathcal{E}_\Omega^v (\|f\|_\mathbb{B}v^{-1})\mathcal{E}_\Omega^w(hw^{-1})\\
&\leq 2^{p-1}[w]_{A_p}\sum_{\Omega \in \mathscr{S}} |E(\Omega)|^{p-1}v(\Omega)^{2-p}\mathcal{E}_\Omega^v (\|f\|_\mathbb{B}v^{-1})\mathcal{E}_\Omega^w(hw^{-1}),
\end{align*}
where in the last passage we have used the sparseness estimate $|\Omega|\leq 2|E(\Omega)|$ for $\Omega\in\mathscr{S}$. Since $p\geq 2$ and $E(\Omega)\subset \Omega$, we have $v(\Omega)^{2-p}\leq v(E(\Omega))^{2-p}$. % and hence
%\begin{align*}
%& \int_{\R^n} T^\mathscr{S} (f\sigma)gw\mbox{d}x\\
%&\leq 2^{p-1}[w]_{A_p}\sum_{\Omega\in \mathscr{S}}\frac{1}{\sigma(\Omega)}\int_\Omega f\sigma\mbox{d}x\cdot \frac{1}{w(\Omega)}\int_\Omega %gw\mbox{d}x \cdot |E(\Omega)|^{p-1}\sigma(E(\Omega))^{2-p}.
%\end{align*}
Furthermore, by H\"older's inequality, we see that
$ |E(\Omega)|\leq w(E(\Omega))^\frac{1}{p}v(E(\Omega))^\frac{1}{p'},$
so $ |E(\Omega)|^{p-1}v(E(\Omega))^{2-p}\leq v(E(\Omega))^\frac{1}{p}w(E(\Omega))^\frac{1}{p'}.$
Plugging these observations above and applying H\"older's inequality again, we get
\begin{align*}
&\sum_{\Omega \in \mathscr{S}} |E(\Omega)|^{p-1}v(E(\Omega))^{2-p}\mathcal{E}_\Omega^v (\|f\|_\mathbb{B}v^{-1})\mathcal{E}_\Omega^w(hw^{-1})\\
&\leq \sum_{\Omega\in \mathscr{S}}v(E(\Omega))^\frac{1}{p}w(E(\Omega))^\frac{1}{p'}\cdot  \mathcal{E}_\Omega^v (\|f\|_\mathbb{B}v^{-1})\mathcal{E}_\Omega^w(hw^{-1}) \\
&\leq \left(\sum_{\Omega\in \mathscr{S}} \left(\mathcal{E}_\Omega^v (\|f\|_\mathbb{B}v^{-1})\right)^pv(E(\Omega))\right)^\frac{1}{p}\left(\sum_{\Omega\in \mathscr{S}} \left(\mathcal{E}_\Omega^w(hw^{-1})\right)^{p'}w(E(\Omega))\right)^\frac{1}{p'}\\
&\leq \|M_v (\|f\|_{\mathbb{B}}v^{-1})\|_{L_p^v(\R;\R)}\|M_w (hw^{-1})\|_{L_{p'}^w(\R;\R)}\\
&\leq pp'\big\|\|f\|_{\mathbb{B}}v^{-1}\big\|_{L_p^v(\R;\R)}\big\|hw^{-1}\big\|_{L_{p'}^w(\R;\R)}=pp'\|f\|_{L_p^w(\R;\mathbb{B})}\|h\|_{L_{p'}^v(\R;\R)}.
\end{align*}
Here $M_w$ and $M_v$ are the classical dyadic maximal operators with respect to the measures $w$ and $v$, respectively. This yields the desired assertion by taking the supremum over all $h$ as above.
\end{proof}

\appendix
\section{Alternative approaches to Theorem \ref{ref}}

Now we will discuss different approaches, which unfortunately do not seem to yield the sharp dependence on the $A_p$ characteristic for any $p$. Anyhow, we believe that the alternative argumentation is of its own interest and connections, which might be useful in other contexts. We would also like to point out that the first method below (which rests on interpolation) does give the optimal exponent in the classical case.

\subsection{Marcinkiewicz-type interpolation}
In the following, we use the weighted weak type bound established in Theorem \ref{weak_theorem} and the  Marcinkiewicz-type interpolation to prove
the weighted maximal $L_p$ inequalities with a suboptimal exponent.

\begin{proof}[Proof of Theorem \ref{ref} with a suboptimal exponent $2/(p-1)$] Let us first recall an interpolation result established by Dirksen \cite{Dir} (see also Junge and Xu \cite{jungexu}). Let $1\leq r<p<q$ be two parameters and suppose that $\left(T_\alpha\right)_{\alpha \in A}$ is  a net of positive, subadditive maps on $L_0(\mathcal{M})$, which is of weak type $\left(r,r\right)$ with a constant $C_r$ and of weak type $(q,q)$ with a constant $C_q$. Then we have

\begin{displaymath}
\left\|\left(T_\alpha\left(x\right)\right)_{\alpha \in A}\right\|_{L_p\left(\mathcal{M}; \ell_\infty\right)} \lesssim \max\{C_r,C_q\}\left(\frac{pr}{p-r}+\frac{pq}{q-p}\right)^2\|x\|_{L_p\left(\mathcal M\right)}.
\end{displaymath}
Also, recall the celebrated self-improvement property of dyadic $A_p$ weights established by Coifman and Fefferman \cite{CF}: if $w \in A_p$ for some $p>1$, then $w \in A_{p-\varepsilon}$ for $\varepsilon \approx [w]_{A_p}^{-\frac{1}{p-1}}$ and $[w]_{A_{p-\varepsilon}} \lesssim[w]_{A_p}$.
Now we will combine all the above facts to obtain the desired estimate. Let $1<p<\infty$ and suppose that $ {  w}$ is an $A_p$ weight. We use interpolation for maps $\left(\mathcal{E}_n\right)_{n \geq 0}$ with $r = p - \varepsilon$ and $q = p+\varepsilon$, where $\e$ is as above. Then $ {  w}\in A_{p-\e}$ and $ {  w}\in A_{p+\e}$ (since $A_p\subset A_{p+\e}$), so the weak-type estimates hold true due to Theorem \ref{weak_theorem}. This yields the desired weighted Doob's maximal inequality with the constant $c_p[ {  w}]_{A_p}^{2/(p-1)}$. By duality, this implies the estimate
\begin{equation}\label{quadratic}
 \left\|\sum_{n\geq 0}\mathcal{E}_n(a_n)\right\|_{L_p^ {  w}(\mathcal{M})}\lesssim [ {  w}]_{A_p}^2\left\|\sum_{n\geq 0} a_n\right\|_{L_p^ {  w}(\mathcal{M})},\qquad 1<p<\infty,
\end{equation}
with the suboptimal, quadratic dependence on the characteristic.
\end{proof}

\subsection{Factorization and complex interpolation} There is a natural question whether the above exponent $2/(p-1)$ can be improved with the use of structural properties of Muckenhoupt's weights. This question is motivated by the trivial bound
\begin{equation}\label{trivial}
 \left\|\sum_{n\geq 0}\mathcal{E}_n(a_n)\right\|_{L_1^ {  w}(\mathcal{M})}\leq  [ {  w}]_{A_1}\left\|\sum_{n\geq 0} a_n\right\|_{L_1^ {  w}(\mathcal{M})},
\end{equation}
which gives hope that some interpolation arguments might lead to an improvement.
We start with the factorization of weights. In the statement below, we work on the (classical) measure space $(X,\F,\mu)$ equipped with some filtration $(\F_n)_{n\geq 0}$.

\begin{theorem}
Let $1<p<\infty$ and suppose that $w$ is an $A_p$ weight. Then there exist $A_1$ weights $w_1$, $w_2$ satisfying $[w_1]_{A_1}\lesssim [w]_{A_p}$, $[w_2]_{A_1}\lesssim [w]_{A_p}^{1/(p-1)}$ and $w=w_1w_2^{1-p}.$
\end{theorem}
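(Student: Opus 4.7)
The plan is to adapt the classical Jones factorization theorem to the martingale setting via the Rubio de Francia algorithm, with the dependence on $[w]_{A_p}$ tracked sharply through Theorem~\ref{ref}.

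First, apply Theorem~\ref{ref} in the scalar (commutative) case: for $w \in A_p$, the martingale maximal operator $Mf := \sup_{n\geq 0} \mathcal{E}_n|f|$ satisfies $\|M\|_{L_p^w \to L_p^w} \lesssim_p [w]_{A_p}^{1/(p-1)}$. Setting $v := w^{1/(1-p)}$, which lies in $A_{p'}$ with $[v]_{A_{p'}} = [w]_{A_p}^{1/(p-1)}$, the same theorem applied to $v$ yields $\|M\|_{L_{p'}^v \to L_{p'}^v} \lesssim_p [v]_{A_{p'}}^{1/(p'-1)} = [w]_{A_p}$. Introduce the change-of-measure sublinear operators $T_w f := M(fw)/w$ and $T_v g := M(gv)/v$; a direct change-of-variables computation gives $\|T_w\|_{L_{p'}^w \to L_{p'}^w} = \|M\|_{L_{p'}^v \to L_{p'}^v} \lesssim_p [w]_{A_p}$ and $\|T_v\|_{L_p^v \to L_p^v} = \|M\|_{L_p^w \to L_p^w} \lesssim_p [w]_{A_p}^{1/(p-1)}$. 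For any positive seeds $h_0 \in L_{p'}^w$ and $g_0 \in L_p^v$, the Rubio de Francia iterates
\[
R_w h_0 := \sum_{k\geq 0} \frac{T_w^k h_0}{(2\|T_w\|)^k}, \qquad R_v g_0 := \sum_{k\geq 0} \frac{T_v^k g_0}{(2\|T_v\|)^k}
\]
converge in their respective spaces and satisfy $T_w(R_w h_0) \leq 2\|T_w\| R_w h_0$, which unwinds to $M((R_w h_0)w) \leq 2\|T_w\|(R_w h_0)w$. Thus $(R_w h_0)w$ is an $A_1$ weight with characteristic $\lesssim_p [w]_{A_p}$, and similarly $(R_v g_0)v \in A_1$ with characteristic $\lesssim_p [w]_{A_p}^{1/(p-1)}$.

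Next, I would select seeds $h_0,g_0$ so that $w_1 := (R_w h_0)w$ and $w_2 := (R_v g_0)v$ satisfy $w = w_1 w_2^{1-p}$. Using $v^{1-p} = w$, this reduces to the pointwise matching equation $R_w h_0 \cdot (R_v g_0)^{1-p} = w^{-1}$. A natural ansatz is to fix $g_0$, set $G := R_v g_0$, and pick $h_0 := w^{-1} G^{p-1}$. Since $R_w h_0 \geq h_0$ (the $k=0$ term of the defining series), one obtains the pointwise inequality $w_1 w_2^{1-p} \geq w$; the reverse bound, up to a $p$-dependent constant absorbable into the $A_1$ characteristics, should follow from a normalization argument.

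The \emph{main obstacle} lies in this final matching step: the Rubio de Francia algorithm only delivers a pointwise inequality, not an equality, so producing the precise factorization requires additional input. In the Euclidean case, one typically bridges the gap via reverse Hölder self-improvement of $A_p$ weights; however, as Remark~\ref{nonhomog} explicitly shows, reverse Hölder fails for general martingale filtrations, so a substitute is needed. Two viable workarounds are: (i) a duality reduction, treating $p \in (1,2]$ directly and deriving $p \in [2,\infty)$ by factorizing $v \in A_{p'}$ (with $p' \in (1,2]$) and inverting via the identity $(1-p)(1-p')=1$; and (ii) a coupled Rubio de Francia iteration that intertwines $T_w$ and $T_v$ and whose fixed points encode the factorization intrinsically. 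Either route is likely to exploit the operator-concavity of $t\mapsto t^{1/(p-1)}$ already invoked in the proof of Theorem~\ref{ref} for $p>2$.
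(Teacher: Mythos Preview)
Your setup via two separate Rubio de Francia iterations $R_w$ and $R_v$ is natural, and your bounds on the $A_1$ characteristics of $(R_wh_0)w$ and $(R_vg_0)v$ are correct. The genuine gap is exactly where you locate it: the matching step. Two independent iterations will never produce the \emph{exact} identity $w=w_1w_2^{1-p}$, and the ``normalization argument'' you allude to does not exist in general. If you only get $w_1w_2^{1-p}\geq w$ (or $\leq Cw$), multiplying one of the factors by the necessary correction $w/(w_1w_2^{1-p})$ can destroy its $A_1$ property, since that ratio need not be bounded from above and below. Your workaround (i), duality, is used in the paper but only to transfer the result from $p\geq 2$ to $p<2$; it does not resolve the matching issue itself.

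The paper's proof implements your workaround (ii) concretely, and the trick is worth knowing. One runs a \emph{single} Rubio de Francia iteration on unweighted $L_p$ for the combined operator
\[
T(f)=\big(w^{-1/p}M(f^{p-1}w^{1/p})\big)^{1/(p-1)}+w^{1/p}M(fw^{-1/p}),
\]
which is bounded with $\|T\|_{L_p\to L_p}\lesssim[w]_{A_p}^{1/(p-1)}$ and, crucially, sublinear when $p\geq 2$ (by concavity of $t\mapsto t^{1/(p-1)}$). If $\varphi$ is the resulting almost-fixed point satisfying $T\varphi\leq 2\|T\|\varphi$, one sets
\[
w_1=w^{1/p}\varphi^{p-1},\qquad w_2=w^{-1/p}\varphi.
\]
Then $w_1w_2^{1-p}=w$ holds \emph{identically} by algebra, while the two summands in $T\varphi\leq 2\|T\|\varphi$ unwind to $M(w_1)\leq (2\|T\|)^{p-1}w_1$ and $M(w_2)\leq 2\|T\|w_2$, giving $[w_1]_{A_1}\lesssim[w]_{A_p}$ and $[w_2]_{A_1}\lesssim[w]_{A_p}^{1/(p-1)}$. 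The restriction $p\geq 2$ (needed for sublinearity of $T$) is then removed by the duality you mention: factor $v=w^{1/(1-p)}\in A_{p'}$ when $p<2$.
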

\begin{proof}
Suppose first that $p\geq 2$. Let $M$ be the classical maximal operator associated with $(\F_n)_{n\geq 0}$. Introduce the auxiliary operator $T$ acting on nonnegative random variables by
$$ T(f)=(w^{-1/p}M(f^{p-1}w^{1/p}))^{1/(p-1)}+w^{1/p}M(fw^{-1/p}).$$
This operator is well-defined and bounded on (unweighted) $L_p(X,\F,\mu)$: this follows at once from the fact that $M$ maps $L_{p'}(w^{-1/(p-1)})$ to itself and $L_p(w)$ to itself. Actually, since $\|M\|_{L_p^w\to L_p^w}\lesssim [w]_{A_p}^{1/(p-1)}$ and $\|M\|_{L_{p'}^{w^{-1/(p-1)}}\to L_{p'}^{w^{-1/(p-1)}}}\lesssim  [w^{-1/(p-1)}]_{A_{p'}}^{1/(p'-1)}=[w]_{A_p}$, we obtain
\begin{equation}\label{mind}
 \|T\|_{L_p\to L_p}\lesssim [w]_{A_p}^{1/(p-1)}.
\end{equation}
Furthermore,  the operator $T$ is positive and sublinear (the latter property holds since $p\geq 2$). Now, define $\varphi \in L_p$ as the sum of $L_p$ convergent series
$$ \varphi=\sum_{n=1}^\infty (2\|T\|_{L_p\to L_p})^{-n}T^n(\psi),$$
where $\psi$ is an arbitrary fixed  norm-one element of $L_p(X)$.
We define the $A_1$ factors by $w_1=w^{1/p}\varphi^{p-1}$ and $w_2=w^{-1/p}\varphi$, so that $w=w_1w_2^{1-p}$. Directly by the definition of $\varphi$, we get
$$ T\varphi\leq 2\|T\|_{L_p\to L_p}\sum_{n=1}^\infty (2\|T\|_{L_p\to L_p})^{-n-1}T^{n+1}( \psi)\leq 2\|T\|_{L_p\to L_p}\varphi,$$
which is equivalent to
$$ (w^{-1/p}M(\varphi^{p-1}w^{1/p}))^{1/(p-1)}+w^{1/p}M(\varphi w^{-1/p})\leq 2\|T\|_{L_p\to L_p}\varphi.$$
But $\varphi=(w^{-1/p}w_1)^{1/(p-1)}$, so we obtain $M(w_1)\leq \big(2\|T\|_{L_p\to L_p}\big)^{p-1}w_1$ and $M(w_2)\leq 2\|T\|_{L_p\to L_p}w_2$.
It remains to apply \eqref{mind} to complete the analysis for $p\geq 2$.

If $1<p<2$, we pass to the dual weight $v=w^{-1/(p-1)}\in A_{p'}$. By what we have just proved, $v=v_1v_2^{1-p'}$ with $[v_1]_{A_1}\lesssim  [v]_{A_{p'}}=[w]_{A_p}^{1/(p-1)}$ and $[v_2]_{A_1}\lesssim [v]_{A_{p'}}^{1/(p'-1)}=[w]_{A_p}$. Thus, $w=v_2v_1^{1-p}$ is the desired factorization.
\end{proof}

We will prove the following fact.

\begin{theorem}
Suppose that for some $q>1$ and some $\kappa>0$ we have
\begin{equation}\label{assumpt}
 \left\|\sum_{n\geq 0}\mathcal{E}_n(a_n)\right\|_{L_q^ {w}(\mathcal{M})}\leq  [ {w}]_{A_q}^{\kappa}\left\|\sum_{n\geq 0} a_n\right\|_{L_q^ {w}(\mathcal{M})}
\end{equation}
for all $A_q$ weights $w$.
Then for any $1<p<q$ we have
$$  \left\|\sum_{n\geq 0}\mathcal{E}_n(a_n)\right\|_{L_p^ {w}(\mathcal{M})}\leq  [ {w}]_{A_p}^\gamma\left\|\sum_{n\geq 0} a_n\right\|_{L_p^ {w}(\mathcal{M})},$$
where $\gamma=1-q'/p'+\kappa(q'/p'+q/p)$.
\end{theorem}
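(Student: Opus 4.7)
The plan is to combine the Jones-type factorization of $A_p$ weights just established with complex (Stein--Weiss) interpolation between the $L_1$ endpoint (where the trivial bound \eqref{trivial} is available) and the $L_q$ endpoint (which is the hypothesis). Factor $w = u_1 u_2^{1-p}$ with $A_1$-weights $u_1, u_2$ satisfying $[u_1]_{A_1} \lesssim [w]_{A_p}$ and $[u_2]_{A_1} \lesssim [w]_{A_p}^{1/(p-1)}$. Introduce the auxiliary weight
$$
W := u_1 u_2^{1-q}.
$$
This lies in $A_q$ by a short essinf computation (the reverse direction of Jones factorization), and its characteristic obeys
$$
[W]_{A_q} \leq [u_1]_{A_1}[u_2]_{A_1}^{q-1} \lesssim [w]_{A_p}^{\,1+(q-1)/(p-1)} = [w]_{A_p}^{(p+q-2)/(p-1)}.
$$

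Write $T((a_n)_{n\geq 0}) = \sum_{n\geq 0} \mathcal{E}_n(a_n)$. The inequality \eqref{trivial} applied to $u_1 \in A_1$ yields
$\|T\|_{L_1^{u_1}(\mathcal{M};\ell_1) \to L_1^{u_1}(\mathcal{M})} \leq [u_1]_{A_1} \lesssim [w]_{A_p}$, while the hypothesis \eqref{assumpt} applied to $W \in A_q$ gives $\|T\|_{L_q^{W}(\mathcal{M};\ell_1) \to L_q^{W}(\mathcal{M})} \leq [W]_{A_q}^\kappa \lesssim [w]_{A_p}^{\kappa(p+q-2)/(p-1)}$. I would then invoke Stein--Weiss complex interpolation at $\theta = q'/p' \in (0,1)$. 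The exponent identity $1/p = (1-\theta)\cdot 1 + \theta/q$ is immediate from $1/p' = \theta/q'$, and a short calculation using $1 - 1/q = 1/q'$ and $(q-1)/q = 1/q'$ confirms that the interpolated weight satisfies
$$
w_\theta^{1/p} \;=\; u_1^{1-\theta}\,W^{\theta/q} \;=\; u_1^{\,1-\theta/q'}\,u_2^{-(q-1)\theta/q} \;=\; u_1^{1/p}\,u_2^{-1/p'} \;=\; w^{1/p},
$$
so $w_\theta = w$. The interpolated bound is therefore
$$
\|T\|_{L_p^{w}(\mathcal{M};\ell_1) \to L_p^w(\mathcal{M})} \;\lesssim\; [w]_{A_p}^{(1-\theta)+\kappa\theta(p+q-2)/(p-1)}.
$$
Simplifying the exponent via $\theta(p+q-2)/(p-1) = \theta + \theta(q-1)/(p-1) = \theta + q/p$ (where the last step uses the identities $q'(q-1)=q$ and $p'(p-1)=p$) produces precisely $\gamma = 1 - q'/p' + \kappa(q'/p' + q/p)$.

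The main technical hurdle is justifying the complex-interpolation step rigorously in the operator-valued weighted sequence setting: one needs the Calder\'on identity
$(L_{p_0}^{u_1}(\mathcal{M};\ell_1),\, L_{p_1}^{W}(\mathcal{M};\ell_1))_\theta = L_p^{w}(\mathcal{M};\ell_1)$
with the appropriate norm control. Because $u_1$ and $W$ are scalar weights of the form $u\otimes I$ and therefore commute with every element of $\mathcal{M}$, each weighted trace $\tau^{u_1}$, $\tau^{W}$ is a bona fide faithful normal tracial functional on $\mathcal{M}$, and one may first apply Pisier's complex-interpolation theorem for $L_p(\mathcal{M};\ell_1)$ with respect to a single trace and then transfer via the standard Stein--Weiss change-of-measure argument. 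A minor complementary point is that $T$ only needs to be bounded on positive sequences; since $T$ is positive and the target inequality concerns positive operators, interpolation on the positive cone (or equivalently on the full signed $L_p^w(\mathcal{M};\ell_1)$) suffices.
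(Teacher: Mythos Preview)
Your overall strategy matches the paper's: factor $w=u_1u_2^{1-p}$ via Jones factorization, then complex-interpolate between the trivial $L_1^{u_1}$ bound \eqref{trivial} and the assumed $L_q^{u_1u_2^{1-q}}$ bound at $\theta=q'/p'$; your weight and exponent computations are correct and agree with the paper's.

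The difference lies in how the interpolation is carried out. You invoke Stein--Weiss complex interpolation as a black box for the operator $T:L_r^{\,\cdot}(\mathcal{M};\ell_1)\to L_r^{\,\cdot}(\mathcal{M})$, and you are right to flag the Calder\'on identity $(L_1^{u_1}(\mathcal{M};\ell_1),L_q^{W}(\mathcal{M};\ell_1))_\theta=L_p^{w}(\mathcal{M};\ell_1)$ as the nontrivial point: this is not quite off the shelf, since the two traces $\tau^{u_1}$ and $\tau^W$ differ and the $\ell_1$-valued interpolation with varying trace needs an argument. The paper sidesteps this entirely by a concrete substitution: writing $a_n=|x_n|^2\,u_2$ turns $\big\|\sum a_n\big\|_{L_r^{u_1u_2^{1-r}}}$ into $\big\|(\sum|x_n|^2)^{1/2}\big\|_{L_{2r}^{u_1u_2}}^2$ for $r\in\{1,p,q\}$, so the interpolation takes place in the \emph{single} weighted space scale $L_{2r}^{w_0}(\mathcal{M})$ with $w_0=u_1u_2$ fixed. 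One then runs the three-lines lemma by hand on the analytic function $F(z)=\tau\big(\sum_n\mathcal{E}_n(|X_n(z)|^2u_2)\,B(z)\,u_1\big)$, where $X_n$ and $B$ are Kosaki extensions, and reads off the $L_p^w$ bound after taking the supremum over $b$. Your route is cleaner to state and, once the $\ell_1$-valued Stein--Weiss step is properly justified (which is plausible via the change-of-measure you indicate), yields the same exponent; the paper's route is more hands-on but requires no interpolation theory beyond Kosaki for a single trace.
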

\begin{proof}
Let $\theta\in (0,1)$ be uniquely determined by the condition $p^{-1}=(1-\theta)+\theta q^{-1}$ (that is, $\theta=q'/p'$). Assume that $w$ is an arbitrary $A_p$ weight and let $w=w_1w_2^{1-p}$ be the factorization granted by the previous theorem; let us also distinguish the weight $w_0=w_1w_2$. Suppose that $(x_n)_{n\geq 0}$ is a finite sequence in $L_{2p}(\mathcal{M})$, satisfying $\left\|\left(\sum_{n\geq 0}|x_n|^2\right)^{1/2}\right\|_{L_{2p}^{ {  w}_0}(\mathcal{M})}\leq 1$ and let $b$ be an element of $L_{p'}^{ {  w}_0}(\mathcal{M})$ of norm not exceeding one. By the results of Kosaki \cite{Ko}, there exist continuous functions $X_n,\,B:\{z\in \mathbb{C}:0\leq \Re z\leq 1\}\to \mathcal{M}$, $n=0,\,1,\,2,\,\ldots$, analytic in the interior of the strip, such that $X_n(\theta)=x_n$, $B(\theta)=b$,
$$\max\left\{\left\|\left(\sum_{n\geq 0}|X_n(it)|^2\right)^{1/2}\right\|_{L_{2}^{ {  w}_0}(\mathcal{M})},\left\|\left(\sum_{n\geq 0}|X_n(1+it)|^2\right)^{1/2}\right\|_{L_{2q}^{ {  w}_0}(\mathcal{M})}\right\}\leq 1$$
and
$$ \max\left\{\|B(it)\|_{L_{\infty}^{ {  w}_0}(\mathcal{M})},\|B(1+it)\|_{L_{q'}^{ {  w}_0}(\mathcal{M})}\right\}\leq 1.$$
Consider the analytic function
$$ F(z)=\tau \left(\sum_{n\geq 0} \mathcal{E}_n\big(|X_n(z)|^2 {  w}_2\big)B(z)  {  w}_1\right)$$
defined for $z\in \mathbb{C}$ with $0\leq \Re z\leq 1$. We have
$$ |F(it)|\leq \tau\left(\sum_{n\geq 0}\mathcal{E}_n\big(|X_n(it)|^2 {  w}_2\big) {  w}_1\right)\|B(it)\|_{L_{\infty}^{ {  w}_0}(\mathcal{M})}\leq \tau\left(\sum_{n\geq 0}\mathcal{E}_n\big(|X_n(it)|^2 {  w}_2\big) {  w}_1\right).$$
Putting $a_n=|X_n(it)|^2 {  w}_2$, we see that \eqref{trivial} yields
\begin{align*}
|F(it)|\leq [w_1]_{A_1}\tau\left(\sum_{n\geq 0}a_n w_1\right)=[w_1]_{A_1}\left\|\left(\sum_{n\geq 0}|X_n(it)|^2\right)^{1/2}\right\|_{L_{2}^{ {  w}_0}(\mathcal{M})}^2\leq [w_1]_{A_1}.
\end{align*}
Furthermore, by H\"older's inequality,
\begin{align*}
|F(1+it)|&\leq \left\|\sum_{n\geq 0}\mathcal{E}_n\big(|X_n(1+it)|^2 {  w}_2\big)\right\|_{L_q^{ {  w}_1 {  w}_2^{1-q}}(\mathcal{M})}\|B(1+it)\|_{L_{q'}^{ {  w}_0}(\mathcal{M})}\\
&\leq \left\|\sum_{n\geq 0}\mathcal{E}_n\big(|X_n(1+it)|^2 {  w}_2\big)\right\|_{L_q^{ {  w}_1 {  w}_2^{1-q}}(\mathcal{M})}.
\end{align*}
However, by a simple application of H\"older's inequality, we get $[  {  w}_1 {  w}_2^{1-q}]_{A_q}\leq [ {  w}_1]_{A_1}[ {  w}_2]_{A_1}^{q-1}$ and hence \eqref{assumpt}, applied to $a_n=|X_n(1+it)|^2 {  w}_2$, gives
$$|F(1+it)|\lesssim \left([ {  w}_1]_{A_1}[ {  w}_2]_{A_1}^{q-1}\right)^{\kappa}\left\|\left(\sum_{n\geq 0}|X_n(1+it)|^2\right)^{1/2}\right\|_{L_{2q}^{ {  w}_0}(\mathcal{M})}^2\leq \left([ {  w}_1]_{A_1}[ {  w}_2]_{A_1}^{q-1}\right)^\kappa.$$
Consequently, by the three lines lemma, we get $|F(\theta)|\lesssim [w_1]_{A_1}^{1-\theta}\left([ {  w}_1]_{A_1}[ {  w}_2]_{A_1}^{q-1}\right)^{\kappa \theta}$. But $[w_1]_{A_1}\lesssim [w]_{A_p}$ and $[w_2]_{A_2}\lesssim [w]_{A_p}^{1/(p-1)}$, so
$$ \tau \left(\sum_{n\geq 0} \mathcal{E}_n\big(|x_n|^2 {  w}_2\big)b  {  w}_1\right)\lesssim [w]_{A_p}^{1-\theta+\kappa\theta+(q-1)\kappa \theta/(p-1)}.$$
Recall that $\theta=q'/p'$. Taking the supremum over all $b$ as above, we obtain
$$ \left\|\sum_{n\geq 0} \mathcal{E}_n\big(|x_n|^2 {  w}_2\big)\right\|_{L_p^{ {  w}}(\mathcal{M})}
\lesssim [w]_{A_p}^{\gamma}
\left\|\left(\sum_{n\geq 0}|x_n|^2\right)^{1/2}\right\|_{L_{2p}^{ {  w}_0}(\mathcal{M})}^2.$$
It remains to plug $a_n=|x_n|^2 {  w}_2$, $n=0,\,1,\,2,\,\ldots$ to get the claim.
\end{proof}

\end{document}